\DeclarePairedDelimiter\floor{\lfloor}{\rfloor}
\numberwithin{equation}{section}
\newtheorem{theorem}{Theorem}[section]
\newtheorem{lemma}[theorem]{Lemma}
\newtheorem{definition}[theorem]{Definition}
\newtheorem{remark}[theorem]{Remark}
\begin{document}
\title[Extension of CZ Type Singular Integrals and commutators]{On extension of Calder\'on-Zygmund Type Singular Integrals and their commutators}

\author[S. Bagchi, R. Garg, J. Singh]
{Sayan Bagchi \and Rahul Garg \and Joydwip Singh} 

\address[S. Bagchi]{Department of Mathematics and Statistics, Indian Institute of Science Education and Research Kolkata, Mohanpur--741246, West Bengal, India.}
\email{sayan.bagchi@iiserkol.ac.in}

\address[R. Garg]{Department of Mathematics, Indian Institute of Science Education and Research Bhopal, Bhopal--462066, Madhya Pradesh, India.}
\email{rahulgarg@iiserb.ac.in}

\address[J. Singh]{Department of Mathematics and Statistics, Indian Institute of Science Education and Research Kolkata, Mohanpur--741246, West Bengal, India.}
\email{js20rs078@iiserkol.ac.in}

\subjclass[2020]{Primary 42B20; Secondary 42B35, 42B37}

\keywords{Calder\'on-Zygmund singular integrals, commutators, Hardy space, Lipschitz space, BMO, Muckenhoupt weights}

\begin{abstract}
Motivated by the recent works \cite{Yu-Jiu-Li-CZ-JFA-2021, Chen-Guo-Extension-CZ-JFA-2021}, we study the following extension of Calder\'on-Zygmund type singular integrals 
$$ T_{\beta}f (x) = p.v. \int_{\mathbb{R}^n} \frac{\Omega(y)}{|y|^{n-\beta}} f(x-y) \, dy, $$ 
for $0 < \beta < n$, and their commutators. We establish estimates of these singular integrals on Lipschitz spaces, Hardy spaces and Muckenhoupt $A_p$-weighted $L^p$-spaces. We also establish Lebesgue and Hardy space estimates of their commutators. Our estimates are uniform in small $\beta$, and therefore one can pass onto the limits as $\beta \to 0$ to deduce analogous estimates for the classical Calder\'on-Zygmund type singular integrals and their commutators.
\end{abstract}

\maketitle

\tableofcontents

\section{Introduction}
Consider the classical Calder\'on-Zygmund type (CZ type) singular integral defined by 
\begin{align*} 
Tf (x) = p.v. \int_{\mathbb{R}^n} \frac{\Omega(y)}{|y|^n} f(x-y) \, dy = \lim_{\varepsilon \to  0} \int_{|y| > \varepsilon} \frac{\Omega(y)}{|y|^n} f(x-y) \, dy, 
\end{align*}
for a suitable class of functions $f$ on $\mathbb{R}^n$. 

In the above, the function $\Omega$ is assumed to satisfy the following conditions: 
\begin{align} \label{conditions:main-Omega-function}
\left\{ \begin{array}{ll}
\Omega \in L^\infty (S^{n-1}), & (L^{\infty}\text{-bounded}) \\ \\ 
\Omega(r x') = \Omega(x'), & \text{(homogeneous of degree $0$)} \\ \\
\int_{S^{n-1}} \Omega(x') \, d \sigma (x') = 0, & \text{ (cancellation)}, 
\end{array} \right.
\end{align}
for every $r> 0$, with $S^{n-1}$ denoting the unit sphere $\left\{ x' \in \mathbb{R}^{n} : |x'| = 1 \right\}$. 

In view of their wide applications to a variety of partial differential equations, CZ type singular integrals have played an extremely significant role in the development of the theory of harmonic analysis too. There is a rich literature on $L^p$ and weighted $L^p$-boundedness of these integrals, and the same can be found in many books (see, for example, \cite{Stein-book-Singular-integrals-1970, Lu-Ding-Yan-book-singular-int-2007}). 

Motivated by the recent works \cite{Yu-Jiu-Li-CZ-JFA-2021, Chen-Guo-Extension-CZ-JFA-2021}, in this paper we consider the following extension of CZ type singular integrals defined, for $0<\beta<n$, by
\begin{equation} \label{def:extension-CZ-type}
T_{\beta}f (x) = p.v. \int_{\mathbb{R}^n} \frac{\Omega(y)}{|y|^{n-\beta}} f(x-y) \, dy. 
\end{equation}
Here, in addition to \eqref{conditions:main-Omega-function}, we also assume that $\Omega$ satisfies the following Dini type condition: 
\begin{align} \label{conditions:Dini-Omega-function}
\int_{0}^{1} \frac{\omega_{\infty}(\delta)}{\delta} \, d \delta < \infty, 
\end{align} 
where $\displaystyle \omega_{\infty}(\delta) := \sup \left\{ \left|\Omega(x') - \Omega(y')\right| : \left| x' - y' \right| \leq \delta, |x'| = |y'| = 1 \right\}.$ 

Yu--Jiu--Li \cite{Yu-Jiu-Li-CZ-JFA-2021} analysed operator $T_\beta$ with $\Omega$ satisfying conditions \eqref{conditions:main-Omega-function} and \eqref{conditions:Dini-Omega-function}, and proved the following $L^q \, (1<q<\infty)$ estimate of $T_\beta$ from which the strong type $(q,q)$ bound of the CZ type singular integral $T$ can be recovered when $\beta \to 0$. 

\begin{theorem} [\cite{Yu-Jiu-Li-CZ-JFA-2021}]
\label{thm:Yu-Jiu-Li-CZ-JFA-2021}
Let $0<\beta_{0}<\frac{1}{2}$ be fixed. Suppose that $\Omega$ satisfies conditions \eqref{conditions:main-Omega-function} and \eqref{conditions:Dini-Omega-function}. Then for any $1<q<\infty$, there exists a constant $C> 0$ such that
\begin{align*} 
\left\|T_{\beta} f\right\|_{{L^{q}}} \leq C\left(\|f\|_{{L^{q}}}+\frac{\beta^{\frac{(q-1) n}{q}}}{\sqrt[q]{(n(q-1)-\beta q)}}\|f\|_{L^{1}} \right)
\end{align*} 
holds true for all $f\in L^{1}(\mathbb{R}^{n})\cap L^{q}(\mathbb{R}^{n})$ and $ 0 < \beta < \min \left\{1-\beta_{0}, \frac{(q-1) n}{q}\right\}$.
\end{theorem}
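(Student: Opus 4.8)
The plan is to split $T_\beta$ at a single radius $R\ge 1$ — to be taken as $R=1/\beta$ at the very end — writing $T_\beta=T_\beta^{R,0}+T_\beta^{R,\infty}$, where $T_\beta^{R,0}$ has kernel $K_\beta^{R,0}(y)=\frac{\Omega(y)}{|y|^{n-\beta}}\mathbf 1_{\{|y|\le R\}}$ (the principal value being needed only near the origin) and $T_\beta^{R,\infty}$ has kernel $K_\beta^{R,\infty}(y)=\frac{\Omega(y)}{|y|^{n-\beta}}\mathbf 1_{\{|y|>R\}}$. On $\{|y|>R\}$ the kernel is bounded and decaying, so $T_\beta^{R,\infty}f$ is an absolutely convergent integral for $f\in L^1$, and the principal value defining $T_\beta f$ exists and equals $T_\beta^{R,0}f+T_\beta^{R,\infty}f$ — first for $f\in C_c^\infty$, using the cancellation hypothesis, and then for all $f\in L^1\cap L^q$ by a routine limiting argument once the estimate below is proved. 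The idea is that $T_\beta^{R,0}$ is handled by rescaling to the unit scale and invoking Calder\'on--Zygmund theory \emph{uniformly in} $\beta$, while $T_\beta^{R,\infty}$ is handled by Young's convolution inequality after an explicit computation of $\|K_\beta^{R,\infty}\|_{L^q}$.

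\emph{The local piece, by rescaling.} The substitution $y=Ru$ together with the homogeneity of degree $0$ of $\Omega$ conjugates $T_\beta^{R,0}$ to $T_\beta^{1,0}$ through the dilation $g\mapsto g(R\,\cdot)$ up to a factor $R^{\beta}$, whence $\|T_\beta^{R,0}\|_{L^q\to L^q}=R^{\beta}\|T_\beta^{1,0}\|_{L^q\to L^q}$, so it suffices to bound the unit-scale operator uniformly in $\beta$. Its kernel $K_\beta^{1,0}$ is a compactly supported Calder\'on--Zygmund kernel: it has the cancellation $\int_{\varepsilon<|y|\le1}K_\beta^{1,0}(y)\,dy=0$ (from $\int_{S^{n-1}}\Omega\,d\sigma=0$) and size $|K_\beta^{1,0}(y)|\le\|\Omega\|_{L^\infty}|y|^{-n}$ on $|y|\le1$. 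I would then (i) prove $\|T_\beta^{1,0}\|_{L^2\to L^2}\le C(n,\|\Omega\|_{L^\infty})$ by the standard dyadic Fourier decomposition for rough singular integrals, the only new point being that each dyadic block at scale $2^{-j}$ merely acquires the harmless factor $2^{-j\beta}\le1$ compared to the $\beta=0$ case; and (ii) verify the H\"ormander regularity condition for $K_\beta^{1,0}$ and its adjoint kernel using the Dini hypothesis \eqref{conditions:Dini-Omega-function} — here the variation of the radial factor $|y|^{-(n-\beta)}$ contributes a factor $\tfrac1{1-\beta}\le\tfrac1{\beta_0}$, which is precisely where the restriction $\beta<1-\beta_0$ is used, while the variation of $\Omega$ is controlled by the Dini condition and the truncation at $|y|=1$ costs only $O(|z|)$. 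Calder\'on--Zygmund theory (weak type $(1,1)$, interpolation, duality) then gives $\|T_\beta^{1,0}\|_{L^q\to L^q}\le C(q,n,\|\Omega\|_{L^\infty},\omega_\infty,\beta_0)$ for every $1<q<\infty$, uniformly in $\beta\in(0,1-\beta_0)$.

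\emph{The global piece and the choice of $R$.} For $\beta<\frac{(q-1)n}{q}$, equivalently $n(q-1)-\beta q>0$, polar coordinates give
\[
\|K_\beta^{R,\infty}\|_{L^q}^q\le\|\Omega\|_{L^\infty}^q\,|S^{n-1}|\int_R^\infty r^{-(n(q-1)-\beta q)-1}\,dr=\|\Omega\|_{L^\infty}^q\,|S^{n-1}|\,\frac{R^{-(n(q-1)-\beta q)}}{n(q-1)-\beta q},
\]
so that by Young's inequality
\[
\|T_\beta^{R,\infty}f\|_{L^q}\le\|K_\beta^{R,\infty}\|_{L^q}\|f\|_{L^1}\le\|\Omega\|_{L^\infty}\,|S^{n-1}|^{1/q}\;\frac{R^{-\left(\frac{(q-1)n}{q}-\beta\right)}}{\big(n(q-1)-\beta q\big)^{1/q}}\,\|f\|_{L^1}.
\]
Now take $R=1/\beta$, which is admissible since $\beta<1-\beta_0<1$. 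Then $R^{\beta}=\beta^{-\beta}=e^{\beta\log(1/\beta)}\le e^{1/e}<2$, so the local estimate gives $\|T_\beta^{1/\beta,0}f\|_{L^q}\le 2C\|f\|_{L^q}$; and $R^{-\left(\frac{(q-1)n}{q}-\beta\right)}=\beta^{\frac{(q-1)n}{q}-\beta}=\beta^{\frac{(q-1)n}{q}}\beta^{-\beta}\le2\,\beta^{\frac{(q-1)n}{q}}$ in the global estimate. Adding the two contributions yields, for all $f\in L^1\cap L^q$ and all $\beta$ in the stated range, the asserted inequality with a constant $C=C(q,n,\|\Omega\|_{L^\infty},\omega_\infty,\beta_0)$.

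The main obstacle is the local piece: obtaining the Calder\'on--Zygmund bounds for $T_\beta^{1,0}$ with constants independent of $\beta$. The $L^2$ step requires carrying out the Fourier-side analysis of rough kernels (dilation invariance is lost, but each dyadic block only picks up the benign factor $2^{-j\beta}\le1$), and the H\"ormander step is where the Dini hypothesis and the cutoff $\beta<1-\beta_0$ genuinely enter; everything after that is the choice $R=1/\beta$ and bookkeeping.
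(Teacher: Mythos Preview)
Your overall strategy is correct and matches the paper's setup: split $T_\beta$ at scale $1/\beta$ into a local piece and a tail (the paper uses the smooth cutoff $\chi_\beta$ of \eqref{def:cut-off-function}--\eqref{def:operator-decomposition} rather than your sharp indicator, which is immaterial), and your treatment of the tail via Young's inequality is precisely the paper's proof of Lemma~\ref{lem2.2:Yu-Jiu-Li-CZ-JFA-2021}. The paper does not itself prove Theorem~\ref{thm:Yu-Jiu-Li-CZ-JFA-2021}; it is quoted from \cite{Yu-Jiu-Li-CZ-JFA-2021}, where a ``geometric approach'' (after \cite{Li-geometric-approach-Calderon-Zygmund-2003,Li-Wang-new-proof-Calderon-Zygmund-2006}) is used for the local piece, while the sharper local bound (Theorem~\ref{thm2.1:Chen-Guo-Extension-CZ-JFA-2021}, valid without Dini and for all $0<\beta<n$) is quoted from \cite{Chen-Guo-Extension-CZ-JFA-2021}, whose method is Fourier-analytic via van der Corput and a radial regularisation. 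Your route for the local piece --- rescale to unit scale and invoke classical Calder\'on--Zygmund theory through an $L^2$ bound plus the H\"ormander condition --- is a third, legitimate approach under the Dini hypothesis; it buys conceptual simplicity at the cost of needing \eqref{conditions:Dini-Omega-function}, and is closest in spirit to the pointwise kernel estimate the paper records as Lemma~\ref{lem:Main-lemma}.

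One correction worth noting: the factor $\tfrac{1}{1-\beta}$ you attribute to the radial variation in the H\"ormander integral is not actually forced. On the unit-scale support $|y|\le 1$ one has $|y|^{-(n+1-\beta)}\le |y|^{-(n+1)}$, and with this cruder bound the H\"ormander constant is uniform in all $\beta\in(0,n)$ --- this is exactly how the paper's proof of Lemma~\ref{lem:Main-lemma} handles the analogous term, absorbing $|y|^\beta$ into a harmless constant on the support of $\chi_\beta$. So your argument in fact does not require the restriction $\beta<1-\beta_0$; that restriction is an artifact of the particular method in \cite{Yu-Jiu-Li-CZ-JFA-2021}, not of the Calder\'on--Zygmund approach you outline.
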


Soon after the work of \cite{Yu-Jiu-Li-CZ-JFA-2021}, Chen--Guo \cite{Chen-Guo-Extension-CZ-JFA-2021} improved Theorem \ref{thm:Yu-Jiu-Li-CZ-JFA-2021} by getting rid of the restriction of the smoothness of $\Omega$ as well as the condition on $0 < \beta < 1-\beta_0$ for $0<\beta_{0}<\frac{1}{2}$. More precisely, they proved the following result. 
\begin{theorem} [\cite{Chen-Guo-Extension-CZ-JFA-2021}]
\label{thm:Chen-Guo-Extension-CZ-JFA-2021}
Let $\Omega$ satisfy condition \eqref{conditions:main-Omega-function} and $1<q<\infty$. There exists a constant $C> 0$ such that
\begin{align*} 
\left\|T_{\beta} f\right\|_{{L^{q}}} \leq C\left(\|f\|_{{L^{q}}}+\frac{\beta^{\frac{(q-1) n}{q}}}{\sqrt[q]{(n(q-1)-\beta q)}}\|f\|_{L^{1}} \right)
\end{align*} 
holds true for all $f\in L^{1}(\mathbb{R}^{n})\cap L^{q}(\mathbb{R}^{n})$ and $ 0 < \beta < \frac{(q-1) n}{q}$.
\end{theorem}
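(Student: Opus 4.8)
The plan is to split $T_\beta$ at the single scale $R=1/\beta$, writing $T_\beta=T_\beta^{0}+T_\beta^{\infty}$, where $T_\beta^{0}$ is the operator with kernel $\Omega(y)|y|^{\beta-n}\mathbf{1}_{\{|y|\le 1/\beta\}}$ (a principal value at the origin, which converges thanks to the cancellation of $\Omega$) and $T_\beta^{\infty}$ has kernel $\Omega(y)|y|^{\beta-n}\mathbf{1}_{\{|y|> 1/\beta\}}$; since $f\in L^1\cap L^q$, both pieces are well defined. For the exterior piece one discards the cancellation entirely and uses only $\Omega\in L^\infty(S^{n-1})$ together with Young's inequality in the form $\|K\ast f\|_{L^q}\le\|K\|_{L^q}\|f\|_{L^1}$. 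Because $0<\beta<(q-1)n/q$ the exponent $(\beta-n)q+n-1$ is strictly less than $-1$, so the kernel lies in $L^q$ and
\[
\|T_\beta^{\infty}f\|_{L^q}\le\|\Omega\|_{L^\infty}\bigl\|\,|\cdot|^{\beta-n}\mathbf{1}_{\{|\cdot|>1/\beta\}}\bigr\|_{L^q}\|f\|_{L^1}=c_{n,q}\|\Omega\|_{L^\infty}\,\frac{\beta^{\,(q-1)n/q-\beta}}{\sqrt[q]{\,n(q-1)-\beta q\,}}\,\|f\|_{L^1}.
\]
Since $\beta^{\,(q-1)n/q-\beta}=\beta^{-\beta}\,\beta^{\,(q-1)n/q}$ and $\beta^{-\beta}=e^{-\beta\log\beta}\le e^{1/e}$ for every $\beta>0$, this is exactly the $\|f\|_{L^1}$-term in the statement, and it is the only place where the restriction $\beta<(q-1)n/q$ is used.

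The core is the interior piece, where the goal is $\|T_\beta^{0}f\|_{L^q}\le C\|f\|_{L^q}$ with $C$ \emph{independent of $\beta$}; a crude use of Young's inequality would only give a constant of order $\beta^{-1-\beta}$, useless as $\beta\to0$, so the cancellation of $\Omega$ must be exploited. First, by the dilation identity $\Omega(y)|y|^{\beta-n}\mathbf{1}_{\{|y|\le R\}}=R^{\beta-n}\bigl(\Omega|\cdot|^{\beta-n}\mathbf{1}_{\{|\cdot|\le1\}}\bigr)(y/R)$ and a change of variables, the whole question reduces to the unit ball: if $T_\beta^{0,1}$ denotes the analogue of $T_\beta^{0}$ with cutoff at radius $1$, then $\|T_\beta^{0}f\|_{L^q}\le(1/\beta)^{\beta}\,\|T_\beta^{0,1}\|_{L^q\to L^q}\,\|f\|_{L^q}$ and $(1/\beta)^{\beta}\le e^{1/e}$, so it suffices to bound $\|T_\beta^{0,1}\|_{L^q\to L^q}$ uniformly in $\beta\in(0,n)$. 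For this I would use the elementary identity $|y|^{\beta}=1-\beta\int_{|y|}^{1}s^{\beta-1}\,ds$ (valid for $0<|y|\le1$) to obtain the exact formula
\[
T_\beta^{0,1}f=\widetilde T_{1}f-\beta\int_{0}^{1}s^{\beta-1}\,\widetilde T_{s}f\;ds,
\]
where $\widetilde T_{s}$ is the classical ($\beta=0$) Calder\'on--Zygmund operator $T$ with kernel truncated to $\{|y|\le s\}$. By the classical theory of rough singular integrals --- applicable here precisely because $\Omega\in L^\infty(S^{n-1})$ has mean value zero, \emph{with no smoothness or Dini hypothesis} --- the truncations $\widetilde T_{s}$ are bounded on $L^q$ uniformly in $s>0$ (equivalently, the maximal truncated singular integral associated with $T$ is bounded on $L^q$). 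Hence
\[
\|T_\beta^{0,1}f\|_{L^q}\le\Bigl(\sup_{s>0}\|\widetilde T_{s}\|_{L^q\to L^q}\Bigr)\Bigl(1+\beta\int_{0}^{1}s^{\beta-1}\,ds\Bigr)\|f\|_{L^q}=2\Bigl(\sup_{s>0}\|\widetilde T_{s}\|_{L^q\to L^q}\Bigr)\|f\|_{L^q},
\]
because $\int_{0}^{1}s^{\beta-1}\,ds=1/\beta$, so the prefactor $\beta$ exactly cancels it --- this is the whole point. Thus $\|T_\beta^{0}f\|_{L^q}\le C_q\|f\|_{L^q}$ with $C_q$ independent of $\beta$.

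Adding the interior and exterior bounds gives $\|T_\beta f\|_{L^q}\le C\bigl(\|f\|_{L^q}+\beta^{(q-1)n/q}(n(q-1)-\beta q)^{-1/q}\|f\|_{L^1}\bigr)$, as claimed. The one genuine obstacle is the uniform-in-$\beta$ estimate for the interior piece, and it reduces cleanly to the classical fact that the truncations of the rough operator $T$ are uniformly $L^q$-bounded when $\Omega$ is merely $L^\infty$ with mean zero --- exactly the ingredient that lets one dispense with the Dini condition \eqref{conditions:Dini-Omega-function} and with the restriction on $\beta$ present in Theorem~\ref{thm:Yu-Jiu-Li-CZ-JFA-2021}. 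Two routine technical matters remain: justifying the Fubini-type interchange of the principal value with the $s$-integration in the formula for $T_\beta^{0,1}f$ (done by first working with the truncation $\{\varepsilon<|y|\le1\}$ and letting $\varepsilon\to0$), and checking that $T_\beta^{0}f$ and $T_\beta^{\infty}f$ are well defined for $f\in L^1\cap L^q$ and sum to $T_\beta f$.
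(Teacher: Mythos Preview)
Your argument is correct, and for the exterior piece it coincides with what the paper does in Lemma~\ref{lem2.2:Yu-Jiu-Li-CZ-JFA-2021} (the smooth versus sharp cutoff makes no real difference there). The genuine novelty is in your treatment of the interior piece. The paper does not prove Theorem~\ref{thm:Chen-Guo-Extension-CZ-JFA-2021} directly but assembles it from Theorem~\ref{thm2.1:Chen-Guo-Extension-CZ-JFA-2021} (quoted from Chen--Guo) and Lemma~\ref{lem2.2:Yu-Jiu-Li-CZ-JFA-2021}; as the introduction explains, Chen--Guo obtain the uniform-in-$\beta$ bound for $T_1$ via Littlewood--Paley decompositions on the Fourier side and van der Corput estimates, in the spirit of \cite{Duoandikoetxea-Rubio-maximal-singular-int-Inventione-1986, Seeger-Singular-integral-rough-convolution-kernels-JAMS-1996}. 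Your route is more elementary and purely on the spatial side: the identity $|y|^{\beta}=1-\beta\int_{|y|}^{1}s^{\beta-1}\,ds$ rewrites $T_\beta^{0,1}$ as a weighted average of the classical truncations $\widetilde T_s$, and the factor $\beta$ exactly cancels the $1/\beta$ from $\int_0^1 s^{\beta-1}\,ds$, so everything collapses to the known uniform $L^q$-bound $\sup_{s>0}\|\widetilde T_s\|_{L^q\to L^q}<\infty$ for rough kernels with $\Omega\in L^\infty(S^{n-1})$ of mean zero. What you gain is a short, transparent reduction to an off-the-shelf classical fact, avoiding any Fourier analysis or dyadic machinery; what the Chen--Guo approach buys is self-containment (they prove the relevant oscillatory estimate from scratch rather than invoking the maximal rough singular integral) and a framework that extends more readily to sharper endpoint or weighted statements. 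Both approaches hinge on the same philosophical point---that the cancellation of $\Omega$, not any smoothness, is what makes the interior piece uniformly bounded---but yours makes that dependence explicit in a single line.
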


In \cite{Yu-Jiu-Li-CZ-JFA-2021}, the authors built their analysis based on the so called ``geometric approach" which is inspired by the works \cite{Li-geometric-approach-Calderon-Zygmund-2003,Li-Wang-new-proof-Calderon-Zygmund-2006}. On the other hand, authors of \cite{Chen-Guo-Extension-CZ-JFA-2021} developed their analysis in a very different way. They adapted some ideas from \cite{Duoandikoetxea-Rubio-maximal-singular-int-Inventione-1986, Seeger-Singular-integral-rough-convolution-kernels-JAMS-1996} and made use of the van der Corput lemma and a regularisation of the radial variable of the integral kernel in their proof, and succeeded without needing any smoothness of $\Omega$. The genesis of \cite{Yu-Jiu-Li-CZ-JFA-2021, Chen-Guo-Extension-CZ-JFA-2021} is in the works of \cite{Yu-Jiu-extension-Riesz-transform-2019, Yu-Zheng-Jiu-Remarks-on-well-posedness}, where the authors studied this problem in the context of Riesz transforms, that is, for operators $T_\beta$ corresponding to $\Omega(x) = x_j / |x|$, with $j = 1, 2, \ldots, n$. 

An important aspect to study operators of the form $T_\beta$ is not just its formal connection with the CZ operator $T$, that we can recover $T$ from $T_\beta$ (in an appropriate sense) by taking $\beta \to 0$, but also the fact that good quantitative estimates on these operators $T_\beta$ provide a crucial tool in the analysis of several classical operators. For example, such singular integrals appear in the approximation of the surface quasi-geostrophic (SQG) equation by the generalised SQG equation, as explained beautifully in \cite{Yu-Jiu-extension-Riesz-transform-2019, Yu-Zheng-Jiu-Remarks-on-well-posedness, Chen-Guo-Extension-CZ-JFA-2021}. In our work, we borrow ideas from \cite{Yu-Jiu-Li-CZ-JFA-2021,Chen-Guo-Extension-CZ-JFA-2021}, but in our proofs we shall always need some smoothness condition on $\Omega$. Altogether, in the spirit of Theorem \ref{thm:Yu-Jiu-Li-CZ-JFA-2021}, the main aim of this paper is to develop results for singular integrals $T_\beta$ (given by \eqref{def:extension-CZ-type}) as well as their commutators on various function spaces. We shall now proceed to describe our results in more detail. 

In some of our results, we shall also need the following stronger Dini type condition: 
\begin{align} \label{conditions:alpha-Dini-Omega-function}
\int_{0}^{1} \frac{\omega_{\infty}(\delta)}{\delta^{1+\alpha}} \, d \delta < \infty
\end{align} 
for some $\alpha \in (0,1]$. It is not difficult to verify that the function $\Omega(x) = x_j / |x|$ with $j = 1, 2, \ldots, n$, satisfies both of the conditions \eqref{conditions:main-Omega-function} and \eqref{conditions:alpha-Dini-Omega-function}.

Let us first discuss our results on Hardy spaces $H^p (\mathbb{R}^n)$ for $0 < p \leq 1$. A detailed history of classical results on Hardy spaces can be found in \cite{Stein-book-Harmonic-Analysis-1993,  Lu-book-4-Lect-Hp-1995, Grafakos-Modern-Fourier-Analysis}. As a special case of Theorems 2 and 3 of \cite{Ding-Lu-hom-frac-Hardy-Tohoku-2000}, it is known that if $\Omega$ satisfies conditions \eqref{conditions:main-Omega-function} and \eqref{conditions:alpha-Dini-Omega-function} for some $0 < \alpha \leq 1$, then 
\begin{align*}
& \| T_{\alpha} f \|_{L^{q}} \leq C_{\alpha} \|{f}\|_{H^p} \quad \text{for } \, 0<\alpha<1, \frac{n}{n+\alpha} \leq p <1, \, \frac{1}{q}=\frac{1}{p}-\frac{\alpha}{n}, \\ 
\text{and} \quad & \| T_{\beta} f \|_{H^{q}} \leq C_{\beta} \|{f}\|_{H^p} \quad \text{for } \, 0<\beta<\frac{1}{2}, \, \frac{n}{n+\alpha} < p \leq \frac{n}{n+\beta}, \, \frac{1}{q}=\frac{1}{p}-\frac{\beta}{n}. 
\end{align*}

But, in proving these estimates, one only gets that the implicit constants $C_{\alpha}$ or $C_{\beta} \to \infty$ as $\alpha$ or $\beta \to 0$, and therefore one cannot deduce boundedness of operator $T$.

On the other hand, it is well known that the dual of the Hardy space $H^p$ ($0 < p \leq 1$) is a Campanato space $\mathcal{L}_{\frac{1}{p}-1,r,s}$, which is equivalent to the Lipschitz space $Lip_{n(\frac{1}{p}-1)}$ whenever $\frac{n}{n+1} < p < 1$ (see, Theorems \ref{thm:dual-Hp=Campanato} and \ref{lem:Campanato=Lipschitz}). It is therefore natural to study an analogue of Theorem \ref{thm:Yu-Jiu-Li-CZ-JFA-2021} on Lipschitz spaces. 

Note that given $0 < \gamma < \alpha \leq 1$, there is a unique $p \in (\frac{n}{n+\alpha},1)$ such that $\gamma = n(\frac{1}{p}-1)$. Following is our main result for CZ type operators on Lipschitz spaces. 

\begin{theorem} \label{thm:CZ-Lipschitz-space}
Let $\Omega$ satisfy conditions \eqref{conditions:main-Omega-function} and \eqref{conditions:alpha-Dini-Omega-function} for some $0 < \alpha \leq 1$. If $\frac{n}{n+\alpha} < p < 1 < q < \infty$ are such that $\frac{1}{p}+\frac{1}{q}=2$, then there exists a constant $ C>0$ such that 
\begin{align*}
\left\|T_{\beta} f\right\|_{Lip_{n(\frac{1}{p}-1)}} \leq C \left( \|f\|_{Lip_{n(\frac{1}{p}-1)}}+\frac{\beta^{\frac{(q-1) n}{q}}}{\sqrt[q]{(n(q-1)-\beta q)}}\|f\|_{BMO} \right).
\end{align*}
holds true for all $f \in Lip_{n(\frac{1}{p}-1)} \cap BMO $ and $0 < \beta < \frac{(q-1) n}{q}$. 
\end{theorem}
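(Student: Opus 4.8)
The plan is to use the duality $\big(H^{p}(\mathbb{R}^{n})\big)^{*}=Lip_{n(1/p-1)}$ (Theorems~\ref{thm:dual-Hp=Campanato} and~\ref{lem:Campanato=Lipschitz}) to reduce matters to a single-atom estimate. Put $\gamma:=n(1/p-1)$ and record the identities forced by $\tfrac1p+\tfrac1q=2$: namely $\tfrac1{q'}=\tfrac1p-1=\tfrac\gamma n$ and $\gamma=\tfrac{(q-1)n}{q}$, so the Lipschitz exponent equals the upper threshold for $\beta$, and throughout $0<\beta<\gamma<\alpha\le 1$. Since $\gamma<1$, an $H^{p}$-atom needs only one vanishing moment, so it is enough to show $|\langle T_{\beta}f,a\rangle|\lesssim\|f\|_{Lip_{\gamma}}+D_{\beta}\|f\|_{BMO}$ uniformly over atoms $a$ supported in a ball $B=B(x_{0},r)$, where $D_{\beta}=\beta^{(q-1)n/q}/\sqrt[q]{n(q-1)-\beta q}$ and $T_{\beta}f$ is read through this pairing. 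Passing to the adjoint, $\langle T_{\beta}f,a\rangle=\langle f,T_{\beta}^{*}a\rangle$ with $T_{\beta}^{*}$ of the same type associated to $\widetilde\Omega(y)=\Omega(-y)$ (still satisfying \eqref{conditions:main-Omega-function} and \eqref{conditions:alpha-Dini-Omega-function}); using $\int a=0$ and the cancellation of $\Omega$ (so that $\int_{\mathbb{R}^{n}}T_{\beta}^{*}a=0$) one subtracts $f_{B}$ and splits
\[
\langle T_{\beta}f,a\rangle=\int_{2B}(f-f_{B})\,T_{\beta}^{*}a+\int_{(2B)^{c}}(f-f_{B})\,T_{\beta}^{*}a .
\]

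For the first integral, apply the Chen--Guo estimate (Theorem~\ref{thm:Chen-Guo-Extension-CZ-JFA-2021}) to the bounded, compactly supported function $a$: $\|T_{\beta}^{*}a\|_{L^{q}}\lesssim\|a\|_{L^{q}}+D_{\beta}\|a\|_{L^{1}}$. By H\"older's inequality the local term is $\lesssim\|(f-f_{B})\chi_{2B}\|_{L^{q'}}\big(\|a\|_{L^{q}}+D_{\beta}\|a\|_{L^{1}}\big)$, and the point is to bound $\|(f-f_{B})\chi_{2B}\|_{L^{q'}}$ differently in the two pieces: by $\|f\|_{Lip_{\gamma}}\,r^{\gamma}\,|2B|^{1/q'}$ against $\|a\|_{L^{q}}$, and by John--Nirenberg as $|2B|^{1/q'}\,\|f\|_{BMO}$ against $D_{\beta}\|a\|_{L^{1}}$. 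Since $\|a\|_{L^{q}}\lesssim|B|^{-2\gamma/n}$, $\|a\|_{L^{1}}\lesssim|B|^{-\gamma/n}$ and $\tfrac1{q'}=\tfrac\gamma n$, every power of $r$ cancels and the local term is $\lesssim\|f\|_{Lip_{\gamma}}+D_{\beta}\|f\|_{BMO}$.

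For the second integral, $a\subset B$ together with $\int a=0$ yields, for $z\notin 2B$, the smoothness bound $|T_{\beta}^{*}a(z)|\lesssim\|a\|_{L^{1}}\bigl(r|z-x_{0}|^{-(n-\beta+1)}+\omega_{\infty}(r/|z-x_{0}|)\,|z-x_{0}|^{-(n-\beta)}\bigr)$, coming from the mean value bound for $|w|^{-(n-\beta)}$ and the modulus $\omega_{\infty}$ of $\Omega$ (this also shows $T_{\beta}^{*}a\in L^{1}$, so the pairing converges). Summing against $f-f_{B}$ over the dyadic annuli $2^{k+1}B\setminus 2^{k}B$, one estimates $\int_{2^{k+1}B}|f-f_{B}|$ either by $\|f\|_{Lip_{\gamma}}(2^{k}r)^{\gamma}|2^{k+1}B|$ on annuli near $B$, or by $k\,|2^{k+1}B|\,\|f\|_{BMO}$ on the far ones. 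The resulting series converge uniformly in $\beta$ precisely because $0<\beta<\gamma<\alpha\le 1$: the smooth part needs only $\beta<1$, while $\int_{0}^{1}\omega_{\infty}(\delta)\delta^{-1-\beta}\,d\delta\le\int_{0}^{1}\omega_{\infty}(\delta)\delta^{-1-\alpha}\,d\delta<\infty$, and its $\log$-weighted variant is finite since $\gamma<\alpha$ strictly. The two outcomes have the shape $\|f\|_{Lip_{\gamma}}\,r^{\beta}$ and $\|f\|_{BMO}\,r^{\beta-\gamma}$; taking the smaller one and using the weighted arithmetic--geometric mean inequality bounds the global term by $\|f\|_{Lip_{\gamma}}+\tfrac\beta\gamma\|f\|_{BMO}$, and $\tfrac\beta\gamma\lesssim D_{\beta}$ on $(0,\gamma)$ (with constant depending only on $p$), so it is absorbed. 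Combining the two terms and taking the supremum over atoms finishes the proof; letting $\beta\to0$, so $D_{\beta}\to0$, recovers the classical $\|Tf\|_{Lip_{\gamma}}\lesssim\|f\|_{Lip_{\gamma}}$.

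The main obstacle is keeping all constants uniform in $\beta\in(0,\gamma)$ and, above all, producing exactly the factor $D_{\beta}$ rather than a crude surrogate; the sharp $\beta$-dependence is inherited wholesale from the Chen--Guo $L^{q}$-bound, so the local term is the engine, and one then only has to check that the enlargement $2B$, the John--Nirenberg constants, and the annular summations in the global term contribute factors independent of $\beta$ (or smaller than $D_{\beta}$) --- which they do, because $\beta$ stays bounded away from $\gamma$, from $\alpha$, and from $1$. A secondary subtlety is that $T_{\beta}f$ need not be an absolutely convergent integral on $Lip_{\gamma}$; interpreting it through the $H^{p}$-duality pairing from the outset circumvents this, and it is precisely the joint hypothesis $f\in Lip_{\gamma}\cap BMO$ (both norms finite) that makes the pairing meaningful and permits the two-norm balancing in both the local and the global terms.
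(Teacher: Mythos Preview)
Your overall strategy --- pass to the adjoint, pair with an atom, split into a local piece on $2B$ and a global piece on $(2B)^{c}$ --- is natural, and the local piece is handled correctly: the powers of $r$ cancel exactly and you inherit the sharp factor $D_{\beta}$ from Theorem~\ref{thm:Chen-Guo-Extension-CZ-JFA-2021}. The gap is in the global term.

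You assert that the global integral admits two bounds, of shapes $\|f\|_{Lip_{\gamma}}\,r^{\beta}$ and $\|f\|_{BMO}\,r^{\beta-\gamma}$, and then interpolate. The second bound is fine. But the first requires, for the ``smooth'' kernel piece $r\,|z-x_{0}|^{-(n-\beta+1)}$, that
\[
\int_{|z-x_{0}|>2r}|z-x_{0}|^{-(n-\beta+1)}\,|z-x_{0}|^{\gamma}\,dz<\infty,
\]
i.e.\ $\beta+\gamma<1$; and for the $\omega_{\infty}$ piece it requires $\int_{0}^{1}\omega_{\infty}(\delta)\,\delta^{-1-(\beta+\gamma)}\,d\delta<\infty$, i.e.\ essentially $\beta+\gamma\le\alpha$. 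Neither is guaranteed by the hypotheses $0<\beta<\gamma<\alpha\le1$ (take $\alpha=1$, $\gamma=0.6$, $\beta=0.5$). The convergence conditions you actually check --- ``$\beta<1$'' and ``$\int_{0}^{1}\omega_{\infty}(\delta)\delta^{-1-\beta}\,d\delta<\infty$'' --- are the ones for the \emph{BMO} option, not the Lipschitz one, so they do not justify the bound $\|f\|_{Lip_{\gamma}}\,r^{\beta}$. Your parenthetical that ``$\beta$ stays bounded away from $\gamma$'' is also false: the theorem covers all $\beta\in(0,\gamma)$. Without a valid Lipschitz-side bound the AM--GM interpolation collapses, and the BMO-only bound $r^{\beta-\gamma}\|f\|_{BMO}$ is unbounded as $r\to0$.

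This is exactly why the paper introduces the cut-off $\chi_{\beta}$ and splits $T_{\beta}=T_{1}+T_{2}$ \emph{before} passing to atoms. On the $T_{1}$ side the kernel is supported in $\{|w|\le 2/\beta\}$, so the extra factor $|w|^{\beta}\le(2/\beta)^{\beta}\lesssim1$ is harmless and the far-field integrals carry the honest exponent $|z-x_{0}|^{-(n+1)}$ (resp.\ $|z-x_{0}|^{-n}$), which \emph{do} pair with $|z-x_{0}|^{\gamma}$; one then shows $T_{1}a$ is a $(p,l,0,\varepsilon)$-molecule, giving $H^{p}\to H^{p}$ uniformly in $\beta$. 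On the $T_{2}$ side the support condition $|w|\ge1/\beta$ produces the factor $D_{\beta}$ directly from the $L^{q}$ integral (Lemma~\ref{lem2.2:Yu-Jiu-Li-CZ-JFA-2021}), and one shows $T_{2}a$ is a $(1,q,0,\varepsilon)$-molecule, giving $H^{p}\to H^{1}$ with constant $D_{\beta}$. In short, the $\beta$-dependent truncation is not cosmetic: it is what prevents the divergent tail integrals that your argument runs into.
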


Building on the ideas of the proof of Theorem \ref{thm:CZ-Lipschitz-space}, we also establish weighted $L^p$-estimates and Hardy space estimates of $T_\beta$. We state and prove these results (Theorem \ref{thm:CZ-weighted-Lp-estimate} and \ref{thm:CZ-Hardy-estimate}) in Subsections \ref{subsec:Weighted-CZ} and \ref{subsec:Hardy-CZ}. 

Next, we pursue an analogous study for commutator operators. With $\Omega$ satisfying \eqref{conditions:main-Omega-function}, the commutator $[b, T]$ generated by a CZ type singular integral $T$ and a measurable function $b$ on $\mathbb{R}^n$ is defined by 
\begin{align*}
[b,T]f(x) & : = b(x) Tf(x) - T(bf)(x) = p.v. \int_{\mathbb{R}^n} \frac{\Omega(x-y)}{|x-y|^{n}} [b(x)-b(y)] f(y) \, dy. 
\end{align*}

Early fundamental work on these commutator operators is due to Coifman--Rochberg--Weiss in \cite{Coifman-Rochberg-Weiss-AnnalsMath-1976}, and these are often referred as CRW-type commutators. These commutators are known to be extremely useful in studying regularity of solutions of several second order elliptic partial differential equations. Concerning their $L^p$-boundedness (for $1<p<\infty$), it was shown in \cite{Coifman-Rochberg-Weiss-AnnalsMath-1976} that if $\Omega$ is also a Lipschitz function on $S^{n-1}$, that is, $|\Omega (x) - \Omega (y) | \lesssim |x - y|$ for $|x|$ = $|y| = 1$, then the commutator operator $[b,T]$ is $L^p$-bounded if and only if $b\in BMO$. Janson \cite{Janson-mean-osc-commutator-ArkMath-1978} extended the study initiated by \cite{Coifman-Rochberg-Weiss-AnnalsMath-1976}, showing that with $1 < p < q < \infty$ and $\frac{1}{p}- \frac{1}{q} = \frac{\gamma}{n}$, the commutator operator $[b, T]$ is $(L^p, L^q)$-bounded if and only if $b$ belongs to the Lipschitz space $Lip_{\gamma}(\mathbb{R}^n)$. For more details on this subject, we refer to \cite{Lu-Ding-Yan-book-singular-int-2007}. Recall also that a closely related type of commutator was studied by Chanillo \cite{Chanillo-note-commutators-IUMJ-1982}. For $0<\beta<n$, consider the Riesz potential operator $I_{\beta}$ defined by $\displaystyle I_{\beta}f(x)= \int_{\mathbb{R}^n} \frac{f(y)}{|x-y|^{n-\beta}} \, dy.$ It was shown in \cite{Chanillo-note-commutators-IUMJ-1982} that if $1 < p < q < \infty$ and $\frac{1}{p}- \frac{1}{q} = \frac{\beta}{n}$, then $[b,I_{\beta}]$ is $(L^p, L^q)$-bounded if and only if $b\in BMO(\mathbb{R}^n) $. Results of \cite{Janson-mean-osc-commutator-ArkMath-1978, Chanillo-note-commutators-IUMJ-1982} were further explored and generalised by Paluszy\'{n}ski \cite{Paluszynski-Besov-via-commutator-IUMJ-1995} for Lipschitz functions $b$. 

Now, with $\Omega$ satisfying \eqref{conditions:main-Omega-function}, we consider the following extension of the commutator of the CZ type operator, defined by
\begin{align} \label{def:extension-commutator-operator}
[b,T_{\beta}]f(x) & : = b(x)T_{\beta}f(x)-T_{\beta}(bf)(x) = p.v.\int_{\mathbb{R}^n}\frac{\Omega(x-y)}{|x-y|^{n-\beta}} [b(x)-b(y)] f(y) \, dy
\end{align}
for any $0<\beta < n$, so that formally, $[b,T_{\beta}]$ becomes CRW-type commutator when $\beta=0$.

It is known (see, for example, Chapter 3 of \cite{Lu-Ding-Yan-book-singular-int-2007}) that with $b \in BMO$, 
\begin{align*} 
\|[b,T_{\beta}]f\|_{L^q} \leq C_{\beta} \|f\|_{L^p}, 
\end{align*}
for $0<\beta < n$ and $1 < p < q < \infty$ such that $\frac{1}{p} - \frac{1}{q} = \frac{\beta}{n}$. 

On the other hand, one can show that $|[b,T_{\beta}]f(x)| \leq C \|b\|_{Lip_{\gamma}} I_{\beta+\gamma}(|f|)(x)$ whenever $b \in Lip_{\gamma}(\mathbb{R}^n)$, which implies that 
\begin{align*} 
\|[b,T_{\beta}]f\|_{L^q} \leq C_{\beta} \|b\|_{Lip_{\gamma}} \|f\|_{L^p}, 
\end{align*} 
for $0<\beta<n$, $0<\gamma<1$, and $1 < p < q < \infty$ such that $\frac{1}{p} - \frac{1}{q} = \frac{\gamma}{n}+\frac{\beta}{n}$. 

Note that in either of the above two estimates, the constant $C_{\beta} \to \infty$ as $\beta \to 0$. Analogous to Theorem \ref{thm:Yu-Jiu-Li-CZ-JFA-2021}, following is our main result for these commutators. 
\begin{theorem} \label{thm:commutator-b-BMO-Lp-estimate}
Let $\Omega$ satisfy conditions \eqref{conditions:main-Omega-function} and \eqref{conditions:Dini-Omega-function}. If $1 < r < p < \infty$ and $0 < l < n$ are such that  $\frac{1}{p} = \frac{1}{r} - \frac{l}{n}$, then there exists a constant $C > 0$ such that 
\begin{align*}
\|[b,T_{\beta}] f\|_{L^p} & \leq C \|b\|_{BMO} \left(\|f\|_{L^p} + \frac{\beta^{n(1-\frac{1}{p})}}{\sqrt[p]{n(p-1)-\beta p}} \|f\|_{L^1} + \frac{\beta^{l}}{(l-\beta)^{1- \frac{l}{n}}} \|f\|_{L^r} \right)
\end{align*}
holds true for all $b\in BMO$, $f \in L^p(\mathbb{R}^n) \cap L^1(\mathbb{R}^n)$, and $0<\beta <l$. 
\end{theorem}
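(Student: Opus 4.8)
\smallskip
\noindent\emph{Proof strategy.}
The plan is to transplant the scale-$1/\beta$ splitting of Chen--Guo \cite{Chen-Guo-Extension-CZ-JFA-2021} to the commutator, using the Coifman--Rochberg--Weiss mechanism \cite{Coifman-Rochberg-Weiss-AnnalsMath-1976} to absorb the $BMO$ symbol. A standard truncation argument lets us assume in addition that $b\in L^\infty$: replace $b$ by $b_N=\max\{-N,\min\{b,N\}\}$ (so $\|b_N\|_{BMO}\le 2\|b\|_{BMO}$), prove the estimate with a constant independent of $N$, and let $N\to\infty$ by Fatou's lemma. This only serves to make $\|[b,T_\beta]f\|_{L^p}$ finite a priori; the final constant depends only on $\|b\|_{BMO}$, $n$, $p$, $r$, $l$, $\|\Omega\|_{L^\infty(S^{n-1})}$ and the Dini integral in \eqref{conditions:Dini-Omega-function}. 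We decompose $T_\beta=T_\beta^{\mathrm{loc}}+T_\beta^{\mathrm{glob}}$, the convolution operators with kernels $K_\beta^{\mathrm{loc}}(y)=\frac{\Omega(y)}{|y|^{n-\beta}}\mathbbm{1}_{\{|y|\le 1/\beta\}}$ and $K_\beta^{\mathrm{glob}}(y)=\frac{\Omega(y)}{|y|^{n-\beta}}\mathbbm{1}_{\{|y|> 1/\beta\}}$, so that $[b,T_\beta]=[b,T_\beta^{\mathrm{loc}}]+[b,T_\beta^{\mathrm{glob}}]$.

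\smallskip
\noindent\emph{The local commutator.}
One first verifies that $K_\beta^{\mathrm{loc}}$ is a Calder\'on--Zygmund kernel of Dini class with all structural constants uniform in $0<\beta<l$: on its support $|K_\beta^{\mathrm{loc}}(y)|\le\|\Omega\|_{L^\infty}\beta^{-\beta}|y|^{-n}\lesssim|y|^{-n}$ since $\beta^{-\beta}=e^{-\beta\log\beta}$ stays bounded on $(0,l)$; the cancellation $\int_{|y|\le 1/\beta}K_\beta^{\mathrm{loc}}(y)\,dy=0$ follows from \eqref{conditions:main-Omega-function}; and an $L^1$-Dini/H\"ormander smoothness estimate holds with a $\beta$-independent constant, its modulus of continuity combining $\omega_\infty$ from \eqref{conditions:Dini-Omega-function} with the uniformly controlled regularity of the radial factor $r\mapsto r^{\beta-n}\mathbbm{1}_{\{r\le 1/\beta\}}$. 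Combined with the uniform a priori $L^2$-bound for $T_\beta^{\mathrm{loc}}$ (extractable from the analysis in \cite{Chen-Guo-Extension-CZ-JFA-2021}), the Coifman--Rochberg--Weiss theory for Dini-class singular integrals --- conveniently phrased as the pointwise estimate $M^{\#}\big([b,T_\beta^{\mathrm{loc}}]f\big)(x)\lesssim\|b\|_{BMO}\big(M_s(T_\beta^{\mathrm{loc}}f)(x)+M_sf(x)\big)$ for a fixed $s\in(1,p)$ on the sharp maximal function --- together with the Fefferman--Stein inequality, the $L^p$-boundedness of $M_s$, and the $L^p$-estimate for $T_\beta^{\mathrm{loc}}f$ coming from Theorem \ref{thm:Chen-Guo-Extension-CZ-JFA-2021} with $q=p$, yields
\[
\big\|[b,T_\beta^{\mathrm{loc}}]f\big\|_{L^p}\le C\,\|b\|_{BMO}\Big(\|f\|_{L^p}+\tfrac{\beta^{n(1-1/p)}}{\sqrt[p]{n(p-1)-\beta p}}\,\|f\|_{L^1}\Big),
\]
uniformly in $0<\beta<l$.

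\smallskip
\noindent\emph{The global commutator.}
Here only size estimates enter. Since $r>1$ and $\tfrac1p=\tfrac1r-\tfrac ln$ force $\beta<l<n(1-1/p)$, one has $K_\beta^{\mathrm{glob}}\in L^p(\mathbb R^n)\cap L^{n/(n-l)}(\mathbb R^n)$ with $\|K_\beta^{\mathrm{glob}}\|_{L^p}\sim\tfrac{\beta^{\,n(1-1/p)}}{\sqrt[p]{n(p-1)-\beta p}}$ and $\|K_\beta^{\mathrm{glob}}\|_{L^{n/(n-l)}}\sim\tfrac{\beta^{\,l}}{(l-\beta)^{1-l/n}}$, both read off from the elementary integral $\int_{1/\beta}^{\infty}r^{n-1-q(n-\beta)}\,dr$ and the boundedness of $\beta^{-\beta}$; it is precisely the choice of cut-off scale $1/\beta$ that turns these into positive powers of $\beta$. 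Writing $b(x)-b(y)=\big(b(x)-b_{B(x,|x-y|)}\big)-\big(b(y)-b_{B(x,|x-y|)}\big)$ and splitting $\{|x-y|>1/\beta\}$ into the dyadic annuli $\{2^{j}/\beta<|x-y|\le 2^{j+1}/\beta\}$, $j\ge0$ --- on which $|b_{B(x,|x-y|)}-b_{B(x,2^{j+1}/\beta)}|\lesssim(j+1)\|b\|_{BMO}$ --- one estimates $|[b,T_\beta^{\mathrm{glob}}]f(x)|$, after John--Nirenberg on each annulus, by $\|b\|_{BMO}$ times a sum over $j$ of terms carrying the factor $(j+1)(2^{j}/\beta)^{\beta-n}$ multiplied by a \emph{local} average $\int_{B(x,\,2^{j+1}/\beta)}|f|$. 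Keeping these local averages intact and taking $L^p$-norms by Young's / Minkowski's integral inequality --- pairing the annular kernels in $L^p$ against $f$ in $L^1$, or alternatively in $L^{n/(n-l)}$ against $f$ in $L^r$ (using $\tfrac1r+\tfrac{n-l}{n}=1+\tfrac1p$) --- and then summing in $j$ (convergent since $\beta<n$ for the first pairing and $\beta<l$ for the second, the logarithmic factors $(j+1)$ being absorbed by the geometric decay), one arrives, after a careful choice of the decomposition, at
\[
\big\|[b,T_\beta^{\mathrm{glob}}]f\big\|_{L^p}\le C\,\|b\|_{BMO}\Big(\tfrac{\beta^{\,n(1-1/p)}}{\sqrt[p]{n(p-1)-\beta p}}\,\|f\|_{L^1}+\tfrac{\beta^{\,l}}{(l-\beta)^{1-l/n}}\,\|f\|_{L^r}\Big).
\]
Adding this to the estimate for $[b,T_\beta^{\mathrm{loc}}]$ proves the theorem.

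\smallskip
\noindent\emph{The main obstacle.}
I expect the two delicate points to be: (i) showing that $T_\beta^{\mathrm{loc}}$ is genuinely a Dini-class Calder\'on--Zygmund operator with \emph{all} constants --- size, cancellation, smoothness modulus and the a priori $L^2$-bound --- uniform in $0<\beta<l$, the subtlety being that the smoothness modulus of $K_\beta^{\mathrm{loc}}$ mixes $\omega_\infty$ with the radial factor $r^{\beta-n}$ and the truncation at $|y|=1/\beta$ and must still be Dini with a $\beta$-free constant; and (ii) the quantitative book-keeping of the $\beta$-dependence in the global part, namely arranging the dyadic summation and the applications of Young's inequality so as to recover \emph{exactly} $\tfrac{\beta^{n(1-1/p)}}{\sqrt[p]{n(p-1)-\beta p}}$ and $\tfrac{\beta^{l}}{(l-\beta)^{1-l/n}}$, and in particular ensuring that the $\|f\|_{L^1}$- and $\|f\|_{L^r}$-constants tend to $0$ as $\beta\to0$, so that the estimate passes to the limit and recovers the classical $(L^r,L^p)$-type bound for the CRW commutator $[b,T]$.
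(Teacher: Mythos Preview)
Your treatment of the local commutator is essentially the paper's: both split at scale $1/\beta$, verify that the near-origin kernel is Calder\'on--Zygmund of Dini type uniformly in $\beta$, and run the CRW sharp-maximal mechanism. Two minor discrepancies: (a) the paper uses the \emph{smooth} cut-off $\chi_\beta$ rather than your hard indicator, which makes the kernel smoothness (Lemma~\ref{lem:Main-lemma}) clean via the mean-value theorem and avoids the jump at $|y|=1/\beta$; (b) the local piece $T_1$ is in fact uniformly $L^p$-bounded (Theorem~\ref{thm2.1:Chen-Guo-Extension-CZ-JFA-2021}), so no $\|f\|_{L^1}$ term is needed there --- your citing the full $T_\beta$ estimate is harmless but wasteful.

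The global part, however, has a genuine gap. You claim a \emph{pointwise} bound
\[
\big|[b,T_\beta^{\mathrm{glob}}]f(x)\big|\;\lesssim\;\|b\|_{BMO}\sum_{j\ge0}(j+1)\,(2^j/\beta)^{\beta-n}\int_{B(x,\,2^{j+1}/\beta)}|f|,
\]
obtained by writing $b(x)-b(y)=(b(x)-b_{B(x,|x-y|)})-(b(y)-b_{B(x,|x-y|)})$ and invoking John--Nirenberg. But John--Nirenberg is an \emph{average} estimate; it controls $\int_{A_j}|b(y)-b_{B_j(x)}|\,|f(y)|\,dy$ in $y$, not the factor $|b(x)-b_{B_j(x)}|$ sitting outside the $y$-integral. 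For $b\in BMO$ the quantity $|b(x)-b_{B(x,r)}|$ is \emph{not} pointwise bounded by $C\|b\|_{BMO}$ (think $b=\log|\cdot|$ near the origin), so the displayed inequality fails, and the subsequent Young/Minkowski step has nothing to act on. Telescoping $b_{B(x,2/\beta)}-b_{B(x,2^{j+1}/\beta)}$ gives the $(j+1)\|b\|_{BMO}$ you mention, but the residual $b(x)-b_{B(x,2/\beta)}$ remains, and as a function of $x$ it lies in no global $L^s$, so H\"older does not close either.

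The paper sidesteps this by running the sharp-maximal argument for the global piece \emph{as well}: one fixes a cube $Q$ centred at $x_0$, uses the constant $b_Q$ (independent of $x$), and splits $[b,T_\beta]_2f=(b-b_Q)T_2f-T_2((b-b_Q)f\mathbbm 1_{2Q})-T_2((b-b_Q)f\mathbbm 1_{(2Q)^c})=:A_1-A_2-A_3$. The point is that with $b_Q$ fixed, John--Nirenberg applies legitimately to $\int_Q|b-b_Q|^{r'}$ and to $\int_{2^{k+1}Q}|b-b_Q|^{r'}$. The $\|f\|_{L^r}$ term then emerges not from Young's inequality but from Chanillo's fractional maximal function $[f]^*_{l,u}$ (Lemma~\ref{lem:fractional-maximal-Lp-Lq-bound}): the $A_2$ piece is estimated via Lemma~\ref{lem2.2:Yu-Jiu-Li-CZ-JFA-2021} with exponent $n/(n-l)$, producing exactly $\frac{\beta^l}{(l-\beta)^{1-l/n}}\,\|b\|_{BMO}\,[f]^*_{l,u}(x_0)$, and the $(L^r,L^p)$ bound for $[f]^*_{l,u}$ finishes it. This is the missing mechanism in your sketch.
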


Concerning functions $b$ belonging to Lipschitz spaces $Lip_{\gamma}(\mathbb{R}^n)$, we suitably modify ideas of the proof of Theorem \ref{thm:commutator-b-BMO-Lp-estimate} and study Lebesgue and Hardy space boundedness of commutator operators $[b,T_{\beta}]$. Before stating our results, let us mention that it was shown in \cite{Lu-Wu-Yang-commutators-Hardy-2002-Science-China} that $[b,T]$ is $(H^p,L^q)$-bounded for every $ b \in Lip_{\gamma}(\mathbb{R}^n)$, $0<\gamma \leq 1$, $\frac{n}{n+\gamma}<p\leq 1$ and $\frac{1}{q} = \frac{1}{p} - \frac{\gamma}{n}$. We have the following $(L^p, L^q)$ and $(H^p, L^q)$-estimates for $[b,T_{\beta}]$. 
\begin{theorem} \label{thm:commutator-b-Lipschitz-Lp-Lq-estimate}
Let $\Omega$ satisfy conditions \eqref{conditions:main-Omega-function} and \eqref{conditions:Dini-Omega-function}. If $1<p<q<\infty$ and $0<\gamma<1$ be such that $\frac{1}{q} = \frac{1}{p} - \frac{\gamma}{n}$, then there exists a constant $ C>0$ such that 
\begin{align*}
\|[b,T_{\beta}]f \|_{L^q} \leq C \|b\|_{Lip_{\gamma}} \left( \|f \|_{L^p} + \frac{\beta^{n(1-\frac{1}{q})-\gamma}}{\sqrt[q]{n(q-1)-(\beta-\gamma)q}}\|f\|_{L^1} \right)
\end{align*}
holds true for all $b \in Lip_{\gamma}(\mathbb{R}^n)$, $f \in L^p(\mathbb{R}^n) \cap L^1(\mathbb{R}^n)$, and $0< \beta < n(1-\frac{1}{q}) - \gamma$. 
\end{theorem}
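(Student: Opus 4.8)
The plan is to adapt the near/far decomposition used to prove Theorem~\ref{thm:commutator-b-BMO-Lp-estimate}, making two changes: the role played there by $BMO$ and the Coifman--Rochberg--Weiss theorem is now taken over by $Lip_{\gamma}$ and the Janson--Paluszy\'{n}ski $(L^{p},L^{q})$-boundedness of commutators with Lipschitz symbols, and --- this is the new feature --- the splitting radius is allowed to grow as $\beta\to0$. For $R>0$ write $[b,T_{\beta}]f=[b,T_{\beta}]_{\le R}f+[b,T_{\beta}]_{>R}f$, where $[b,T_{\beta}]_{\le R}$ is the integral operator with kernel $\Omega(x-y)[b(x)-b(y)]|x-y|^{-(n-\beta)}\chi_{\{|x-y|\le R\}}$ and $[b,T_{\beta}]_{>R}$ carries the complementary truncation. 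I would establish the two estimates
\begin{align*}
\big\|[b,T_{\beta}]_{\le R}f\big\|_{L^{q}}\le C\,R^{\beta}\,\|b\|_{Lip_{\gamma}}\|f\|_{L^{p}},\qquad \big\|[b,T_{\beta}]_{>R}f\big\|_{L^{q}}\le C\,\|b\|_{Lip_{\gamma}}\,\big\|\,|\cdot|^{-(n-\beta-\gamma)}\chi_{\{|\cdot|>R\}}\big\|_{L^{q}}\,\|f\|_{L^{1}},
\end{align*}
and then take $R=1/\beta$. Since $R^{\beta}=\beta^{-\beta}\le e^{1/e}$ for every $\beta>0$, the first estimate produces the $\|f\|_{L^{p}}$-term of the theorem; and since the $L^{q}$-norm appearing in the second estimate is finite precisely when $q(n-\beta-\gamma)>n$, that is, when $0<\beta<n(1-\tfrac{1}{q})-\gamma$ --- exactly the range in the hypothesis --- evaluating it in polar coordinates produces the $\|f\|_{L^{1}}$-term, the power $\beta^{\,n(1-1/q)-\gamma}$ coming from $R^{-(n(1-1/q)-\beta-\gamma)}$ up to the bounded factor $\beta^{-\beta}$, and the $q$-th root in the denominator reflecting the blow-up of that norm as $\beta$ reaches the endpoint.

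For the near part, the crucial point is that the compactly supported kernel $K_{\beta}(y):=\Omega(y)|y|^{-(n-\beta)}\chi_{\{|y|\le1\}}$ is a Calder\'on-Zygmund kernel with constants \emph{independent of} $\beta\in(0,n)$: on $\{|y|\le1\}$ one has $|K_{\beta}(y)|\le\|\Omega\|_{L^{\infty}}|y|^{-n}$; the Dini condition \eqref{conditions:Dini-Omega-function} for the angular part, together with the elementary bound $\big|\,|y|^{\beta}-|y'|^{\beta}\big|\lesssim\beta\,|y|^{\beta-1}|y-y'|$ for $|y-y'|\le|y|/2$ (with constant depending only on $n$) for the radial part, gives a Dini-type smoothness estimate with constant uniform in $\beta$; and, decisively, the cancellation $\int_{r<|y|<1}K_{\beta}(y)\,dy=\tfrac{1-r^{\beta}}{\beta}\int_{S^{n-1}}\Omega\,d\sigma=0$ holds for every $0<r<1$ by \eqref{conditions:main-Omega-function}. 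Thus the Calder\'on-Zygmund operator $S_{\beta}$ with kernel $K_{\beta}$ is $L^{2}$-bounded with norm uniform in $\beta$, so the $(L^{p},L^{q})$-boundedness of commutators of Calder\'on-Zygmund operators with $Lip_{\gamma}$ symbols (\cite{Janson-mean-osc-commutator-ArkMath-1978,Paluszynski-Besov-via-commutator-IUMJ-1995}; the proof uses only the size, smoothness and $L^{2}$-bound, hence goes through under Dini regularity) gives $\|[b,S_{\beta}]g\|_{L^{q}}\le C\,\|b\|_{Lip_{\gamma}}\|g\|_{L^{p}}$ with $C$ uniform in $\beta$. A dilation upgrades this to the radius-$R$ truncation: writing $b_{R}(x):=b(Rx)$, $f_{R}(x):=f(Rx)$, one has $[b,T_{\beta}]_{\le R}f(x)=R^{\beta}\,[b_{R},S_{\beta}]f_{R}(x/R)$ and $\|b_{R}\|_{Lip_{\gamma}}=R^{\gamma}\|b\|_{Lip_{\gamma}}$, $\|f_{R}\|_{L^{p}}=R^{-n/p}\|f\|_{L^{p}}$, so the powers of $R$ collect to $\beta+\gamma+\tfrac{n}{q}-\tfrac{n}{p}=\beta$ because $\tfrac{1}{p}-\tfrac{1}{q}=\tfrac{\gamma}{n}$, which is the first displayed estimate.

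For the far part, bounding $|b(x)-b(y)|\le\|b\|_{Lip_{\gamma}}|x-y|^{\gamma}$ and $|\Omega|\le\|\Omega\|_{L^{\infty}}$ pointwise gives the domination $\big|[b,T_{\beta}]_{>R}f(x)\big|\le\|\Omega\|_{L^{\infty}}\|b\|_{Lip_{\gamma}}\big(k_{R}*|f|\big)(x)$ with $k_{R}(y)=|y|^{-(n-\beta-\gamma)}\chi_{\{|y|>R\}}$, and Young's inequality $\|k_{R}*|f|\|_{L^{q}}\le\|k_{R}\|_{L^{q}}\|f\|_{L^{1}}$ gives the second displayed estimate; the hypothesis $\beta<n(1-\tfrac{1}{q})-\gamma$ is precisely what makes $k_{R}\in L^{q}$, and $\|k_{R}\|_{L^{q}}$ is then an explicit polar-coordinate integral. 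I expect the main obstacle to be the one already present in Theorems~\ref{thm:Yu-Jiu-Li-CZ-JFA-2021} and \ref{thm:Chen-Guo-Extension-CZ-JFA-2021}: verifying that $S_{\beta}$, and hence $[b,S_{\beta}]$, has Calder\'on-Zygmund norm uniform in $\beta$ --- this is exactly where the Dini hypothesis \eqref{conditions:Dini-Omega-function} is genuinely used, in the spirit of the ``smoothness of $\Omega$'' that we always require --- and then propagating that uniformity through the dilation step, where both $b$ and $f$ get rescaled; the identity $\tfrac{1}{p}-\tfrac{1}{q}=\tfrac{\gamma}{n}$ is what makes the extraneous powers of $R$ cancel in the near term, so that the single choice $R=1/\beta$ works over the entire admissible range of $\beta$. (Checking that $[b,T_{\beta}]f$ is an absolutely convergent integral for $f\in L^{p}(\mathbb{R}^{n})\cap L^{1}(\mathbb{R}^{n})$ is routine, using $\beta+\gamma<n$.)
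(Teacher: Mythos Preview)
Your far-part treatment is essentially the same as the paper's (they use Minkowski's inequality where you use Young's, which amounts to the same thing). Your near-part argument is genuinely different. The paper does not invoke Janson--Paluszy\'nski as a black box; instead it proves directly the pointwise sharp-maximal bound
\[
M^{\#}\big([b,T_{\beta}]_{1}f\big)(x_{0}) \;\lesssim\; \|b\|_{Lip_{\gamma}}\Big([T_{1}f]^{*}_{\gamma,l}(x_{0}) + [f]^{*}_{\gamma,l}(x_{0})\Big),
\]
using the $a_{1}+a_{2}+a_{3}$ splitting of Subsection~\ref{subsec:commutator-BMO} with the $BMO$ bounds replaced by $|b(y)-b_{Q}|\lesssim\|b\|_{Lip_{\gamma}}|Q|^{\gamma/n}$ inside $2^{k}Q$; then Fefferman--Stein and Chanillo's $(L^{p},L^{q})$-bound for the fractional maximal function (Lemma~\ref{lem:fractional-maximal-Lp-Lq-bound}) finish the job. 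Your approach is more conceptual --- the dilation identity that collapses the exponent of $R$ to $\beta$ via $\tfrac{1}{p}-\tfrac{1}{q}=\tfrac{\gamma}{n}$ is a clean explanation of why the theorem holds --- while the paper's is self-contained and avoids having to chase uniform-in-$\beta$ constants through an external theorem.

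Two small points to tighten. First, your kernel $K_{\beta}$ with the \emph{sharp} cutoff $\chi_{\{|y|\le1\}}$ does not satisfy a Dini-type smoothness estimate across the sphere $|y|=1$; either replace it with the smooth cutoff $\chi$ used in the paper (which is what the paper's operator $T_{1}$ carries after rescaling) or split off the annulus $1\le|y|\le2$ as a bounded compactly supported kernel and control its commutator directly by Young's inequality with exponent $r=\tfrac{n}{n-\gamma}$. Second, the uniform $L^{2}$-bound for $S_{\beta}$ is not where the Dini hypothesis enters --- that bound is Theorem~\ref{thm2.1:Chen-Guo-Extension-CZ-JFA-2021} (Chen--Guo), which needs only condition~\eqref{conditions:main-Omega-function}. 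The Dini condition~\eqref{conditions:Dini-Omega-function} is what makes the \emph{kernel regularity} of $K_{\beta}$ uniform in $\beta$, which in turn is what the Janson--Paluszy\'nski argument (or the paper's sharp-maximal estimate) consumes.
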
 

\begin{theorem} \label{thm:commutator-b-Lipschitz-Hp-Lq-estimate}
Let $\Omega$ satisfy conditions \eqref{conditions:main-Omega-function} and \eqref{conditions:alpha-Dini-Omega-function} for some $0 < \alpha < 1,$ and let $b \in Lip_{\alpha}(\mathbb{R}^n)$. If $\frac{n}{n+\alpha} < p \leq 1 < q < \infty$ be such that $\frac{1}{q}=\frac{1}{p}-\frac{\alpha}{n}$, then there exists a constant $ C>0$ such that 
\begin{align*} 
\| [b,T_{\beta}] f \|_{L^q} \leq C \|b\|_{Lip_{\alpha}} \left( \|f \|_{H^p} + \frac{\beta^{n(1-\frac{1}{q})-\alpha}}{\sqrt[q]{n(q-1)-(\beta-\alpha)q}}\|f\|_{L^1} \right)
\end{align*} 
holds true for all $ f \in H^p(\mathbb{R}^n) \cap L^1(\mathbb{R}^n)$, and $0< \beta < n (1-\frac{1}{q}) - \alpha$. 
\end{theorem}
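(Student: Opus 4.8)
The plan is to mimic the $H^p$-atomic argument behind Theorem~\ref{thm:CZ-Hardy-estimate}, now carrying along the extra commutator factor $b(x)-b(y)$, and to feed the Lebesgue-space bound of Theorem~\ref{thm:commutator-b-Lipschitz-Lp-Lq-estimate} into the ``local'' piece. Since $\frac{n}{n+\alpha}<p\le1$ and $\alpha<1$ we have $0\le n(\frac1p-1)<\alpha<1$, hence $\lfloor n(\frac1p-1)\rfloor=0$, so it is enough to treat $(p,\infty)$-atoms $a$ supported in a ball $B=B(x_0,\rho)$, with $\|a\|_{L^\infty}\le|B|^{-1/p}$ and only the single moment condition $\int_{\mathbb{R}^n}a=0$. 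Given an atomic decomposition $f=\sum_j\lambda_ja_j$ with $\big(\sum_j|\lambda_j|^p\big)^{1/p}\lesssim\|f\|_{H^p}$, the triangle inequality in $L^q$ (recall $q>1$) together with $\|\cdot\|_{\ell^1}\le\|\cdot\|_{\ell^p}$ (valid as $p\le1$) shows that it suffices to prove, uniformly in admissible $\beta$ and in atoms $a$, an estimate of the form $\|[b,T_\beta]a\|_{L^q}\lesssim\|b\|_{Lip_\alpha}\big(1+\frac{\beta^{\,n(1-1/q)-\alpha}}{\sqrt[q]{n(q-1)-(\beta-\alpha)q}}\,\|a\|_{L^1}\big)$, the $\|a\|_{L^1}$-contributions being recombined against the $\lambda_j$'s into a global term of size $\|f\|_{L^1}$ (this is where the assumption $f\in L^1$ is used). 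An alternative entry point I would keep in mind is to write $[b,T_\beta]=[b,T]+\big([b,T_\beta]-[b,T]\big)$, invoke the known $(H^p,L^q)$-boundedness of $[b,T]$ for $b\in Lip_\alpha$ from \cite{Lu-Wu-Yang-commutators-Hardy-2002-Science-China} for the first summand, and treat the difference operator---whose kernel carries the factor $|x-y|^{\beta}-1$, which is small near the diagonal---by the $\beta$-quantitative methods below.

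Fix an atom $a$ and split $\|[b,T_\beta]a\|_{L^q}^q=\int_{2B}+\int_{(2B)^c}$. For the local part I would use $b\in Lip_\alpha$ only locally, writing $[b,T_\beta]a=(b-b(x_0))\,T_\beta a-T_\beta\big((b-b(x_0))a\big)$; on $2B$ one has $|b(x)-b(x_0)|\le\|b\|_{Lip_\alpha}(2\rho)^\alpha$, and $(b-b(x_0))a$ is, up to the normalising factor $\|b\|_{Lip_\alpha}\rho^\alpha$, an $L^\infty$-function of $L^\infty$-norm $\lesssim|B|^{-1/p}$ supported in $B$. Applying Hölder on the finite-measure set $2B$ to pass to an exponent $q_0>q$ and then the uniform $(L^{p_0},L^{q_0})$-bounds for $T_\beta$ and $[b,T_\beta]$ furnished by (the proof of) Theorem~\ref{thm:commutator-b-Lipschitz-Lp-Lq-estimate}---with $\frac1{q_0}=\frac1{p_0}-\frac\alpha n$ and $p_0>1$---reduces this piece to $\|a\|_{L^{p_0}}$ and $\|a\|_{L^1}$. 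The $\rho$-powers coming from $|2B|^{1/q-1/q_0}$, from $\rho^\alpha$, and from the atom normalisation $\|a\|_{L^{p_0}}\le|B|^{1/p_0-1/p}$ cancel precisely because $\frac1q=\frac1p-\frac\alpha n$, leaving $\lesssim\|b\|_{Lip_\alpha}$ together with an $\|a\|_{L^1}$-term inheriting the coefficient of Theorem~\ref{thm:commutator-b-Lipschitz-Lp-Lq-estimate}.

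For the far part I would exploit $\int a=0$ to write, for $x\notin2B$,
\begin{align*}
[b,T_\beta]a(x)&=\int_B\big(b(x)-b(y)\big)\big[K_\beta(x,y)-K_\beta(x,x_0)\big]a(y)\,dy\\
&\qquad+K_\beta(x,x_0)\int_B\big(b(x_0)-b(y)\big)a(y)\,dy,
\end{align*}
with $K_\beta(x,y)=\Omega(x-y)\,|x-y|^{\beta-n}$, and then split $b(x)-b(y)=(b(x)-b(x_0))-(b(y)-b(x_0))$. Each difference $b(\cdot)-b(x_0)$ costs $\|b\|_{Lip_\alpha}$ times a power of $\rho$ (on $B$) or of $|x-x_0|$ (at $x$), the latter being the term that grows. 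For $|y-x_0|\le\rho\le|x-x_0|/2$ I would bound the kernel difference by the Hörmander-type estimate $|K_\beta(x,y)-K_\beta(x,x_0)|\lesssim|x-x_0|^{\beta-n}\big(\tfrac{\rho}{|x-x_0|}+\omega_\infty(\tfrac{\rho}{|x-x_0|})\big)$, which keeps the correct radial power; decomposing $(2B)^c$ into dyadic shells $2^k\rho<|x-x_0|\le2^{k+1}\rho$ and summing in $k\ge1$, the decisive radial integral is $\int_{|x-x_0|>2\rho}|x-x_0|^{(\beta-n)q}\,dx$, a constant multiple of $\rho^{-(n(q-1)-\beta q)}/(n(q-1)-\beta q)$, and this is where both the $(n(q-1)-\beta q)$-type denominator and---via the substitution $s=\rho/|x-x_0|$ applied to the $\omega_\infty$-part---the $\beta$-power in the numerator are produced; convergence of the $\omega_\infty$-contribution is exactly guaranteed by the $\alpha$-Dini condition~\eqref{conditions:alpha-Dini-Omega-function}. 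Collecting the remaining $\rho$-powers (which telescope, using $\frac1q=\frac1p-\frac\alpha n$, against $\|a\|_{L^1}\le|B|^{1-1/p}$) gives the far-part bound with the asserted coefficient multiplying $\|a\|_{L^1}$.

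The main obstacle is to make the far-part estimate genuinely uniform as $\beta\to0$: one has to extract from the radial integrals exactly the combination $\beta^{\,n(1-1/q)-\alpha}/\sqrt[q]{n(q-1)-(\beta-\alpha)q}$ and verify that it does not degenerate on the admissible range, which forces a careful coupling of the atom radius $\rho$ with $\beta$ and a separate treatment of the interface scale $|x-y|\sim1$, where $|x-y|^{\beta}-1$ changes sign (so the ``local/global at radius $1$'' cut must be dovetailed with the near/far-from-$B$ cut), as well as an honest accounting of how the atomic $\|a\|_{L^1}$-errors recombine into $\|f\|_{L^1}$. A secondary technical point is the endpoint configuration $p=1$, $q=\frac{n}{n-\alpha}$, where several of these integrals become critical and the $\alpha$-Dini hypothesis is used to its full strength; throughout, the mere local boundedness of $b\in Lip_\alpha$ is accommodated by the $b-b(x_0)$ renormalisation used above.
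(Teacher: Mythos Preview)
There is a genuine gap in your far-part analysis, and it stems from not introducing the cut at scale $1/\beta$ (the $\chi_\beta$ decomposition used throughout the paper).

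Concretely: with your kernel-difference bound $|K_\beta(x,y)-K_\beta(x,x_0)|\lesssim|x-x_0|^{\beta-n}\rho/|x-x_0|$ and $|b(x)-b(x_0)|\le\|b\|_{Lip_\alpha}|x-x_0|^{\alpha}$, the far piece contributes
\[
\|b\|_{Lip_\alpha}\,\rho\,\|a\|_{L^1}\Big(\int_{|x-x_0|>2\rho}|x-x_0|^{(\alpha+\beta-n-1)q}\,dx\Big)^{1/q}
\;\sim\;\|b\|_{Lip_\alpha}\,\rho^{\beta-n(1-\frac1p)}\,\|a\|_{L^1}.
\]
Against the atom normalisation $\|a\|_{L^1}\le|B|^{1-1/p}\sim\rho^{n(1-1/p)}$ this yields $\rho^{\beta}$, which is \emph{unbounded} as the atom radius $\rho\to\infty$; so you do not get a uniform-in-atoms bound and hence no $H^p$ estimate. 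Nor can you absorb this into the $\|f\|_{L^1}$ term: the coefficient $\rho^{\beta-n(1-1/p)}$ depends on the individual atom, and in any case $\sum_j|\lambda_j|\,\|a_j\|_{L^1}$ is not controlled by $\|f\|_{L^1}$. Your remark about the ``interface scale $|x-y|\sim1$'' is also off target: the relevant scale is $|x-y|\sim1/\beta$, which is precisely what produces the numerator $\beta^{n(1-1/q)-\alpha}$ when one integrates $|x-y|^{(\beta-n-\gamma)q}$ over $\{|x-y|>1/\beta\}$.

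What the paper does instead is split $[b,T_\beta]=[b,T_\beta]_1+[b,T_\beta]_2$ via $\chi_\beta$. The piece $[b,T_\beta]_2$ is bounded \emph{directly on $f$} (no atomic decomposition) by the computation \eqref{est1:proof-thm:commutator-b-Lipschitz-Lp-Lq-estimate}, which immediately gives the $\|f\|_{L^1}$ term with the stated $\beta$-coefficient. For $[b,T_\beta]_1$ one argues on atoms exactly as you sketch for the local part, but the far part is now genuinely uniform: the cutoff forces $|x-y|^{\beta}\chi_\beta(|x-y|)\le(2/\beta)^{\beta}\lesssim1$, so the effective kernel decays like $|x-x_0|^{-n}$ rather than $|x-x_0|^{\beta-n}$, and the $\rho$-powers cancel completely (no residual $\rho^\beta$). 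The $\alpha$-Dini condition is used only in this uniform far-part bound for $[b,T_\beta]_1$, via Lemma~\ref{lem:Main-lemma}.
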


In our proofs, as usual, we shall decompose the operator $T_{\beta}$ into two parts using a suitable cut-off function: one part supported near the origin and the other one supported away from the origin. For the same, fix a function $\chi \in C_{c}^{\infty} (\mathbb{R})$ such that $0 \leq \chi(s) \leq 1$, $|\chi^{\prime}(s)|\leq 2$, and 
\begin{align} \label{def:cut-off-function}
\chi(s)  = \left\{\begin{array}{cc}
1  &\textup{ if } |s| \leq 1; \\
0  &\textup{ if } |s| > 2, 
\end{array} \right.
\end{align}
and write $\chi_\lambda(s) = \chi(\lambda s)$ for $\lambda > 0$. 

With that, we decompose $T_{\beta} = T_1 + T_2$, where 
\begin{align} \label{def:operator-decomposition}
T_1f (x) = p.v.\int_{\mathbb{R}^n}\frac{\Omega(y)}{|y|^{n-\beta}}\chi_\beta (|y|)f(x-y) \, dy. 
\end{align}

\medskip The organisation of the paper is as follows. We recollect the relevant preliminary details in Section \ref{sec:prelim}. Proof of Theorem \ref{thm:CZ-Lipschitz-space} in developed in Subsection \ref{subsec:CZ-Lipschitz}, whereas 
a variant of Hardy space estimates and some weighted $L^p$-estimates (resp. Theorems \ref{thm:CZ-Hardy-estimate} and \ref{thm:CZ-weighted-Lp-estimate}) are discussed in Subsections \ref{subsec:Hardy-CZ} and \ref{subsec:Weighted-CZ}. Section \ref{sec:proofs-commutator} is devoted to the study of commutator operators. We prove Theorem \ref{thm:commutator-b-BMO-Lp-estimate} in Subsection \ref{subsec:commutator-BMO}, and finally proofs of Theorems \ref{thm:commutator-b-Lipschitz-Lp-Lq-estimate} and \ref{thm:commutator-b-Lipschitz-Hp-Lq-estimate} are given in Subsections \ref{subsec:commutator-Lp-Lq-spaces} and \ref{subsec:commutator-Hp-Lq-spaces}, respectively. 

\medskip \noindent \textbf{Notations:}
For any cube $Q$ (resp. ball $B$) in $\mathbb{R}^n$ and $r>0$, we denote by $r Q$ the cube (resp. by $r B$ the ball) with the same center as $Q$ (resp. $B$) and the side-length $rl$ (resp. diameter $rd$), where $l$ (resp. $d$) is the side-length (resp. diameter) of $Q$ (resp. $B$). For a Lebesgue measurable set $E \subset \mathbb{R}^n$, we denote its its Lebesgue measure by $|E|$, and write ${\mathbbm{1}}_E$ for its characteristic function. Also, given a Lebesgue measurable function $f$ on $\mathbb{R}^n$, its support set $supp (f)$ is the closure of the set $\{x\in \mathbb{R}^n : f(x) \neq 0\}$, and $f_E$ denotes the average of $f$ over $E$ given by $\displaystyle f_E = \frac{1}{|E|}\int_{E} f(y) \, dy$.

For $A,B>0$, by the expression $A \lesssim B$ we mean $A \leq C B$ for some $C>0$. Also, we write $A \lesssim_{\epsilon} B$ when the implicit constant $C$ may depend on a parameter like $\epsilon$. By $A \sim B$, we mean that $A$ is equivalent to $B$, that is, $A \lesssim B$ and $B \lesssim A$. 

Throughout the article, $C$, $C_\epsilon$ etc will denote  constants whose values might differ from place to place.


\section{Preliminaries} \label{sec:prelim}
Here, we collect the relevant preliminary details related to the topics studied in this article.  


\subsection{Hardy Spaces} \label{subsec:Hardy-spaces}
Let us begin with definitions of \emph{atoms} and \emph{molecules}. For details, we refer to \cite{Taibleson-Weiss-molecular-Hardy-Asterisque-1980, Stein-book-Harmonic-Analysis-1993, Lu-book-4-Lect-Hp-1995}.

\begin{definition}[Atoms] \label{def:atoms}
Let $0<p\leq 1 \leq l \leq \infty, \, p <l$, and $s \geq \floor*{n(\frac{1}{p}-1)}$. A $(p,l,s)$-atom centered at $x_{0}$ is a function $a \in L^{l}(\mathbb{R}^n)$ supported on a ball $B \subset \mathbb{R}^n$ with centre $x_{0}$ and satisfying, 
\begin{enumerate}[label=(\roman*), start=1]
\item $\displaystyle \|a\|_{L^l} \leq |B|^{\frac{1}{l}-\frac{1}{p}}$, 

\item $\displaystyle \int_{\mathbb{R}^n} a(x) x^{\alpha} \, dx =0$, for all $|\alpha| \leq s$. 
\end{enumerate}
\end{definition}

\begin{definition}[Molecules] \label{def:molecules}
Let $0 < p \leq 1 \leq l \leq \infty$ , $p < l$ , $s \geq \floor*{n(\frac{1}{p}-1)}$, and $\varepsilon > max\{\frac{s}{n},\frac{1}{p}-1 \}$. Set $a_0 = 1-\frac{1}{p}+\varepsilon$ and $b_0 = 1-\frac{1}{l}+\varepsilon$. 
A $(p,l,s,\varepsilon)$-molecule centered at $x_0$ is a function $F$ such that $F \in L^l(\mathbb{R}^n)$ and $F(x)|x|^{n b_0}\in L^l(\mathbb{R}^n)$ satisfying,
\begin{enumerate}[label=(\roman*), start=1]
\item $\displaystyle \mathcal{N}_{l}(F) : = \left\| F \right\|_{L^l}^{\frac{a_0}{b_0}} \, \left\| F \right|x-x_0|^{n b_0} \|_{L^l}^{1-\frac{a_0}{b_0}} < \infty$, 

\item $\displaystyle \int_{\mathbb{R}^n}F(x)x^\alpha \, dx = 0$, for all $|\alpha| \leq s$. 
\end{enumerate}
\end{definition}

Now, Hardy spaces $H^{p}$ are defined as follows. 
\begin{definition} \label{def:Hardy-space}
Given $0 < p \leq 1$, the Hardy space $H^{p}$ consists of all tempered distributions $f$ admitting a decomposition $f=\sum_{j}\lambda_{j}a_{j}$, where $a_{j}$ are $(p,l,s)$-atoms and $\sum_{j}|\lambda_{j}|^p < \infty$. We also define $\displaystyle \|f\|_{H^p} := \inf \left(\sum_{j}|\lambda_{j}|^p \right)^{1/p},$ with the infimum being taken over all admissible representations $f=\sum_{j}\lambda_{j}a_{j}$. 
\end{definition}

It is known that the $(H^{p_1}, H^{p_2})$-boundedness for a linear operator $T$ can be proved by studying its action on \emph{atoms} and \emph{molecules}. More precisely, the following result holds true, and we refer to \cite{Lu-book-4-Lect-Hp-1995} for a proof. 

\begin{lemma} \label{lem:Hp1-Hp2-via-molecules}
Let $0 < p_1 \leq p_2 \leq 1$ and $l_1, l_2 \in (1, \infty)$. If for any $(p_1,l_1,s_1)$-atom $a$, $Ta$ is $(p_2,l_2,s_2,\varepsilon)$-molecule, and satisfies $\mathcal{N}_{l_2}(Ta)\leq C$, where C is independent of $a$, then $T$ is $(H^{p_1},H^{p_2})$-bounded. 
\end{lemma}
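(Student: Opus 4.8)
The plan is to reduce the lemma to two ingredients: (a) the \emph{molecular characterisation} of $H^{p_2}$, namely that every $(p_2,l_2,s_2,\varepsilon)$-molecule $F$ belongs to $H^{p_2}$ with $\|F\|_{H^{p_2}} \lesssim \mathcal{N}_{l_2}(F)$, the implicit constant depending only on $n, p_2, l_2, s_2, \varepsilon$; and (b) a routine summation over an atomic decomposition, using that $H^{p_2}$ is a complete $p_2$-normed space. Ingredient (a) is classical (see \cite{Taibleson-Weiss-molecular-Hardy-Asterisque-1980, Lu-book-4-Lect-Hp-1995}), and in the paper one would simply cite it; below I sketch why it holds.

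For (a), fix a molecule $F$, centred (without loss of generality) at the origin, and a radius $R > 0$ to be chosen. Decompose $F = \sum_{k \ge 0} F_k$ with $F_0 = F \, \mathbbm{1}_{\{|x| < R\}}$ and $F_k = F \, \mathbbm{1}_{\{2^{k-1}R \le |x| < 2^k R\}}$ for $k \ge 1$, so that $F_k$ is supported in a ball $B_k$ of radius $\sim 2^k R$ but need not have vanishing moments. Set $a_k = c_k^{-1}(F_k - P_k)$, where $P_k$ is the unique polynomial of degree $\le s_2$, supported on $B_k$, such that $F_k - P_k$ has vanishing moments up to order $s_2$, and $c_k$ is a normalising constant making $a_k$ a $(p_2,l_2,s_2)$-atom. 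Applying Hölder's inequality on each annulus together with the two quantities $\|F\|_{L^{l_2}}$ and $\big\|F\,|x|^{nb_0}\big\|_{L^{l_2}}$ that enter $\mathcal{N}_{l_2}(F)$, one controls $\|F_k\|_{L^{l_2}}$ (and hence $\|P_k\|_{L^{l_2}}$, via the standard comparison of a polynomial on a ball with its $L^1$-average) by a geometric series in $2^k$ whose decay rate is optimised by choosing $R$ to balance the two contributions; this is exactly why $\mathcal{N}_{l_2}(F)$ appears as a geometric mean of the two norms. Summing $\sum_k |c_k|^{p_2}$ then gives $\|F\|_{H^{p_2}} \lesssim \mathcal{N}_{l_2}(F)$, the conditions $\varepsilon > \max\{s_2/n, 1/p_2-1\}$ and $a_0 = 1 - 1/p_2 + \varepsilon$, $b_0 = 1 - 1/l_2 + \varepsilon$ being precisely what makes the series summable.

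For (b), take $f \in H^{p_1}$ and an atomic decomposition $f = \sum_j \lambda_j a_j$ into $(p_1, l_1, s_1)$-atoms with $\sum_j |\lambda_j|^{p_1} \le 2\|f\|_{H^{p_1}}^{p_1}$. By hypothesis each $Ta_j$ is a $(p_2,l_2,s_2,\varepsilon)$-molecule with $\mathcal{N}_{l_2}(Ta_j) \le C$, so $\|Ta_j\|_{H^{p_2}} \le C'$ by (a). Since $0 < p_2 \le 1$, the $p_2$-triangle inequality in $H^{p_2}$ gives, for the partial sums,
\begin{align*}
\Big\| \sum_{j=M}^{N} \lambda_j \, Ta_j \Big\|_{H^{p_2}}^{p_2} \;\le\; \sum_{j=M}^{N} |\lambda_j|^{p_2} \, \|Ta_j\|_{H^{p_2}}^{p_2} \;\le\; (C')^{p_2} \sum_{j=M}^{N} |\lambda_j|^{p_2},
\end{align*}
and since $p_1 \le p_2$ we have $\sum_j |\lambda_j|^{p_2} \le \big(\sum_j |\lambda_j|^{p_1}\big)^{p_2/p_1} < \infty$. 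Hence $\sum_j \lambda_j Ta_j$ is Cauchy, and by completeness converges in $H^{p_2}$ to some $g$; one checks it also converges to $Tf$ in $\mathcal{S}'$ (using $H^{p_2} \hookrightarrow \mathcal{S}'$ together with continuity of $T$ on the relevant class), so $g = Tf$. Taking $p_2$-th roots and then the infimum over atomic decompositions yields $\|Tf\|_{H^{p_2}} \le C' \big(\sum_j |\lambda_j|^{p_1}\big)^{1/p_1} \lesssim \|f\|_{H^{p_1}}$, proving $(H^{p_1}, H^{p_2})$-boundedness.

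The main obstacle is ingredient (a): the bookkeeping needed to choose the crossover radius $R$ so that the geometric-mean structure of $\mathcal{N}_{l_2}(F)$ is exploited optimally, and the uniform control of the moment-correcting polynomials $P_k$. The summation in (b) is essentially formal once one is careful with the quasi-triangle inequality in $H^{p_2}$ and with the sense in which $\sum_j \lambda_j Ta_j$ represents $Tf$; since the statement is quoted verbatim from \cite{Lu-book-4-Lect-Hp-1995}, in practice one invokes it without reproducing this argument.
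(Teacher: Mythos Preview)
The paper does not prove this lemma at all: it is stated as a known fact with the reference ``we refer to \cite{Lu-book-4-Lect-Hp-1995} for a proof,'' so there is no argument in the paper against which to compare. Your sketch correctly outlines the standard proof---the molecular-to-atomic decomposition over dyadic annuli with the crossover radius chosen to optimise the geometric mean defining $\mathcal{N}_{l_2}$, followed by summation via the $p_2$-quasi-triangle inequality---and you yourself anticipated this situation by noting that in practice one simply invokes the cited reference.
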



\subsection{BMO, Campanato and Lipschitz spaces} 
\label{subsec:prelims-BMO-Campanato-Lipschitz-spaces}
Let $f \in L^1_{loc}(\mathbb{R}^n)$. The Hardy-Littlewood maximal function $Mf$ and the Fefferman--Stein sharp maximal function $M^{\sharp}f$ of $f$ are given by 
\begin{align*} 
Mf (x) := \sup_{x \in Q} \frac{1}{|Q|}\int_Q |f(y)| \, dy \quad \text{and} \quad M^{\sharp}f(x) := \sup_{x \in Q} \frac{1}{|Q|} \int_{Q} |f(y) - f_Q| \, dy, 
\end{align*} 
where the supremum is taken over all cubes $Q \subset \mathbb{R}^n$ containing $x$. 

Given a cube $Q$ containing a point $x$, let $Q'$ be the cube centered $x$ with side-length double than that of $Q$, then $Q \subset Q'$ and $|Q'| \sim |Q|$. Moreover, for any $c \in \mathbb{C}$, it is easy to see that 
\begin{align*} 
\frac{1}{|Q|} \int_{Q} |f(y) - f_Q| \, dy \leq \frac{2}{|Q|} \int_{Q} |f(y) - c| \, dy \lesssim \frac{1}{|Q'|} \int_{Q'} |f(y) - c| \, dy,
\end{align*}
so that 
\begin{align} \label{ineq:sharp-maximal-function-pointwise} 
M^{\sharp}f(x) \lesssim \sup_{x \in Q} \frac{1}{|Q|} \int_{Q} | f(y) - c | \, dy, 
\end{align} 
where the supremum is taken over all cubes $Q \subset \mathbb{R}^n$ ``centered" at $x$. 

Next, we recall the definition of the BMO space: 
$$ BMO := \{f \in L^1_{loc}(\mathbb{R}^n) : M^{\#}f \in L^{\infty} \}, $$ 
with the norm of $f \in BMO$ (upto a difference by constants) given by 
$$\|f\|_{BMO} := \|M^{\#}f\|_{L^{\infty}}.$$ 
The John-Nirenberg inequality (see, Remark 2.4.1 on Page 120 in \cite{Lu-Ding-Yan-book-singular-int-2007}) implies that 
$$\|f\|_{BMO} \sim \sup_{Q}\left(\frac{1}{|Q|}\int_{Q}|f(x)-f_Q|^p \, dx \right)^{\frac{1}{p}},$$
for any $1<p<\infty$. 

We now define Campanato spaces which can also be seen as a generalisation of BMO spaces. We refer to \cite{Lu-book-4-Lect-Hp-1995} for more details on these spaces. 
\begin{definition} \label{def:Campanato-space}
Given $\lambda\in [0,\infty), r \in [1,\infty],$ and $s\in \mathbb{Z}_+$, the Campanato space $\mathcal{L}_{\lambda,r,s}$ is defined to be the set of all $f\in L_{loc}^1(\mathbb{R}^n)$ such that $$\|f\|_{\mathcal{L}_{\lambda,r,s}} := \underset{ball \ B\subseteq \mathbb{R}^n}{sup}\frac{1}{|B|^{\lambda}} \left\{\frac{1}{|B|}\int_{B}\left|f(x)-P_{B}^{(s)}(f)(x) \right|^r dx \right\}^{1/r} < \infty, $$ 
where $P_{B}^{(s)}(f)$ denotes the unique polynomial of degree not greater than $s$ such that, for any $h(x) \in \mathcal{P}_{s}$, the set of all polynomials with its degree $\leq s$, $$ \int_{B}\left\{f(x)-P_{B}^{(s)}(f)(x) \right\} h(x) \, dx= 0. $$
\end{definition}

In particular, when $r=1$ and $\lambda = 0$, it turns out that $\mathcal{L}_{0,1,s} = BMO$. In general, the following duality result holds ture (see \cite{Lu-book-4-Lect-Hp-1995}). 
\begin{theorem} \label{thm:dual-Hp=Campanato}
Suppose $0 < p \leq 1$, $1 \leq r \leq \infty$, $\frac{1}{r}+ \frac{1}{r^{\prime}} = 1$, $r^\prime \neq p$ and $s \geq \floor*{n(\frac{1}{p}-1)}$. Then, the dual space of $H^p$ is $\mathcal{L}_{\frac{1}{p}-1,r,s}$ .
\end{theorem}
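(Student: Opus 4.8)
The plan is to identify $(H^{p})^{*}$ with $\mathcal{L}_{\frac{1}{p}-1,r,s}$ by proving the two continuous inclusions $\mathcal{L}_{\frac{1}{p}-1,r,s}\hookrightarrow(H^{p})^{*}$ and $(H^{p})^{*}\hookrightarrow\mathcal{L}_{\frac{1}{p}-1,r,s}$, pairing the atomic description of $H^{p}$ (Definitions \ref{def:atoms} and \ref{def:Hardy-space}) against Campanato functions (Definition \ref{def:Campanato-space}) through H\"older's inequality on balls and the duality $L^{r'}(B)^{*}=L^{r}(B)$. Write $\lambda=\frac{1}{p}-1$ and let $r'$ be the conjugate exponent of $r$; since $p\le1\le r'$ and $r'\neq p$ we have $p<r'$, so $(p,r',s)$-atoms are legitimate, and we freely use the standard fact that $H^{p}$ can be characterised through $(p,r',s)$-atoms. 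The arithmetic identity $\frac{1}{p}-\frac{1}{r'}=\lambda+\frac{1}{r}$ makes every power of $|B|$ occurring below cancel. For the first inclusion, fix $g\in\mathcal{L}_{\lambda,r,s}$; for a $(p,r',s)$-atom $a$ on a ball $B$, the moment conditions give $\int a\,g=\int_{B}a\,(g-P_{B}^{(s)}(g))$, so H\"older and Definition \ref{def:Campanato-space} yield $\bigl|\int a\,g\bigr|\le\|a\|_{L^{r'}}\,\|g-P_{B}^{(s)}(g)\|_{L^{r}(B)}\le|B|^{-(\lambda+\frac{1}{r})}\cdot|B|^{\lambda+\frac{1}{r}}\|g\|_{\mathcal{L}_{\lambda,r,s}}=\|g\|_{\mathcal{L}_{\lambda,r,s}}$. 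Since $\ell^{p}\subset\ell^{1}$ with $\|\cdot\|_{\ell^{1}}\le\|\cdot\|_{\ell^{p}}$ when $0<p\le1$, for a finite atomic combination $f=\sum_{j}\lambda_{j}a_{j}$ this gives $\bigl|\int f\,g\bigr|\le\|g\|_{\mathcal{L}_{\lambda,r,s}}\sum_{j}|\lambda_{j}|\le\|g\|_{\mathcal{L}_{\lambda,r,s}}\|f\|_{H^{p}}$; since finite atomic combinations are dense in $H^{p}$, the functional $\ell_{g}:f\mapsto\int f\,g$ extends to an element of $(H^{p})^{*}$ with $\|\ell_{g}\|\lesssim\|g\|_{\mathcal{L}_{\lambda,r,s}}$, and it depends only on $g$ modulo $\mathcal{P}_{s}$, so $g\mapsto\ell_{g}$ is injective.

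For the reverse inclusion, take $\ell\in(H^{p})^{*}$. For a ball $B$ let $L^{r'}_{s}(B)\subset L^{r'}(B)$ denote the closed subspace of $L^{r'}$-functions supported in $B$ with vanishing moments up to order $s$; after normalisation, each nonzero $h\in L^{r'}_{s}(B)$ equals $|B|^{\lambda+\frac{1}{r}}\|h\|_{L^{r'}}$ times a $(p,r',s)$-atom, whence $|\ell(h)|\le\|\ell\|\,|B|^{\lambda+\frac{1}{r}}\|h\|_{L^{r'}}$. Extending this functional to $L^{r'}(B)$ by Hahn--Banach and using $L^{r'}(B)^{*}=L^{r}(B)$, we obtain $g_{B}\in L^{r}(B)$ with $\ell(h)=\int_{B}h\,g_{B}$ for all $h\in L^{r'}_{s}(B)$ and $\|g_{B}\|_{L^{r}(B)}\le\|\ell\|\,|B|^{\lambda+\frac{1}{r}}$. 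Exhausting $\mathbb{R}^{n}$ by an increasing sequence of balls and observing that two such local representatives differ by an element of $\mathcal{P}_{s}$ on their common domain, we correct the $g_{B}$ by polynomials of degree $\le s$ and patch them into a single $g\in L^{r}_{loc}(\mathbb{R}^{n})$ with $\ell(h)=\int h\,g$ for every $h\in L^{r'}_{s}(B)$ and every ball $B$.

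It remains to estimate $\|g\|_{\mathcal{L}_{\lambda,r,s}}$ and to identify $\ell$ with $\ell_{g}$. On any ball $B$ the patched $g$ differs from the chosen representative $g_{B}$ by an element of $\mathcal{P}_{s}$, hence $g-P_{B}^{(s)}(g)=g_{B}-P_{B}^{(s)}(g_{B})$ on $B$; since the projection $P_{B}^{(s)}$ is bounded on $L^{r}(B)$ with norm depending only on $n,s,r$, we get $\|g-P_{B}^{(s)}(g)\|_{L^{r}(B)}\lesssim\|g_{B}\|_{L^{r}(B)}\le\|\ell\|\,|B|^{\lambda+\frac{1}{r}}$, and dividing by $|B|^{\lambda+\frac{1}{r}}$ yields $\|g\|_{\mathcal{L}_{\lambda,r,s}}\lesssim\|\ell\|$. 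Finally, $\ell$ and $\ell_{g}$ coincide on every atom (a normalised element of some $L^{r'}_{s}(B)$), hence on the dense set of finite atomic combinations, hence $\ell=\ell_{g}$; together with the first part this gives the claimed identification $(H^{p})^{*}=\mathcal{L}_{\frac{1}{p}-1,r,s}$ with comparable norms.

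The estimates above are effortless once the identity $\frac{1}{p}-\frac{1}{r'}=\lambda+\frac{1}{r}$ is in hand; the genuinely delicate issues are elsewhere. First, one must make the pairing $\langle f,g\rangle$ well defined on all of $H^{p}$: a given element of $H^{p}$ admits many (finite and infinite) atomic decompositions, and the value $\sum_{j}\lambda_{j}\int a_{j}g$ must be shown independent of the choice. This is exactly why one should define $\ell_{g}$ first on the dense subspace of finite atomic combinations, where the pairing is an honest convergent integral, and use the comparability of the finite-atomic quasi-norm with the $H^{p}$ quasi-norm before extending by continuity. Second, in the reverse inclusion the duality step fails at the endpoint $r=1$, since $L^{r'}(B)^{*}=L^{\infty}(B)^{*}$ strictly contains $L^{r}(B)=L^{1}(B)$ and Hahn--Banach need not produce an $L^{1}$ representative; the cleanest remedy is to prove the theorem first for $1<r<\infty$ and then deduce the case $r=1$ from the equivalence of the Campanato norms $\|\cdot\|_{\mathcal{L}_{\lambda,r,s}}$ across $r$, the same phenomenon that underlies Theorem \ref{lem:Campanato=Lipschitz}.
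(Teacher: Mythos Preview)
The paper does not give its own proof of this theorem: it is stated as a known duality result and attributed to \cite{Lu-book-4-Lect-Hp-1995}. Your proposal is the standard textbook argument one finds in that reference (and in Taibleson--Weiss \cite{Taibleson-Weiss-molecular-Hardy-Asterisque-1980} or Garc\'{\i}a-Cuerva--Rubio de Francia \cite{Cuerva-Francia-Weightet-norm-inequalities-85}): pair Campanato functions with atoms via H\"older and the moment cancellation for one inclusion, then run Hahn--Banach and local $L^{r'}$--$L^{r}$ duality on balls for the converse, patching by polynomials in $\mathcal{P}_{s}$. The arithmetic $\frac{1}{p}-\frac{1}{r'}=\lambda+\frac{1}{r}$ and the atom/Campanato size bounds are handled correctly, and you rightly flag the two genuine technicalities (well-definedness of the pairing independent of the atomic decomposition, and the failure of $L^{\infty}(B)^{*}=L^{1}(B)$ at $r=1$, to be repaired via the $r$-independence of the Campanato norms). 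There is nothing to compare against in the paper itself; your sketch matches the cited source.
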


Next, we consider Lipschitz spaces which are defined as follows. For more details on these spaces, we refer to \cite{Lu-Wu-Yang-commutators-Hardy-2002-Science-China}. 
\begin{definition} \label{def:lipschitz-functions-space}
For $0<\gamma < 1$, the Lipschitz space $Lip_{\gamma}(\mathbb{R}^n)$ consists of functions $f$ on $\mathbb{R}^n$ satisfying 
$$ \|f\|_{Lip_{\gamma}} := \sup_{\substack{x, y \in \mathbb{R}^n \\ x \neq y}} \frac{|f(x)-f(y)|}{|x-y|^{\gamma}} < \infty .$$
\end{definition}

The following result discusses the relationship between Lipschitz and Campanato spaces (p. 300--302 in \cite{Cuerva-Francia-Weightet-norm-inequalities-85}). 
\begin{theorem} \label{lem:Campanato=Lipschitz}
Given $0<\gamma < 1$ and $1 \leq r \leq \infty$, the spaces $Lip_{\gamma}$ and $\mathcal{L}_{\frac{\gamma}{n}, r, \floor*{\gamma}}$ coincide with equivalent norms. 
\end{theorem}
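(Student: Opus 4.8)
The plan is to prove the equality of the two (semi)normed spaces by a two-sided inclusion, and to reduce everything to the extreme exponents $r=\infty$ and $r=1$. First note that since $0<\gamma<1$ we have $\floor*{\gamma}=0$, so $\mathcal{P}_0$ consists of constants and the defining orthogonality relation $\int_B (f-P_B^{(0)}(f))\,h=0$ (for all constant $h$) forces $P_B^{(0)}(f)=f_B$; hence the Campanato seminorm here is $\|f\|_{\mathcal{L}_{\gamma/n,r,0}}=\sup_B |B|^{-\gamma/n}\big(|B|^{-1}\int_B |f-f_B|^r\big)^{1/r}$. By Jensen's inequality this quantity is non-decreasing in $r$, so $\mathcal{L}_{\gamma/n,\infty,0}$ is the smallest and $\mathcal{L}_{\gamma/n,1,0}$ the largest among these spaces. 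It therefore suffices to establish the two estimates $\|f\|_{\mathcal{L}_{\gamma/n,\infty,0}}\lesssim_n \|f\|_{Lip_\gamma}$ and $\|f\|_{Lip_\gamma}\lesssim_{n,\gamma} \|f\|_{\mathcal{L}_{\gamma/n,1,0}}$; chaining them through an arbitrary $r\in[1,\infty]$ then shows all four seminorms $\|f\|_{\mathcal{L}_{\gamma/n,1,0}}$, $\|f\|_{\mathcal{L}_{\gamma/n,r,0}}$, $\|f\|_{\mathcal{L}_{\gamma/n,\infty,0}}$, $\|f\|_{Lip_\gamma}$ are mutually comparable, which is the assertion.

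For the first estimate, let $f\in Lip_\gamma$ and $B=B(x_0,\rho)$. Writing $f(x)-f_B=|B|^{-1}\int_B (f(x)-f(y))\,dy$ and using $|f(x)-f(y)|\le\|f\|_{Lip_\gamma}|x-y|^\gamma\le (2\rho)^\gamma\|f\|_{Lip_\gamma}$ for $x,y\in B$, we obtain $\|f-f_B\|_{L^\infty(B)}\le (2\rho)^\gamma\|f\|_{Lip_\gamma}$. Dividing by $|B|^{\gamma/n}\sim_n \rho^\gamma$ and taking the supremum over balls gives $\|f\|_{\mathcal{L}_{\gamma/n,\infty,0}}\lesssim_n \|f\|_{Lip_\gamma}$.

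The substantive direction is the converse. First I would prove the telescoping estimate: for every Lebesgue point $x$ of $f$ and every $\rho>0$,
$$ |f(x)-f_{B(x,\rho)}|\le C_{n,\gamma}\,\rho^{\gamma}\,\|f\|_{\mathcal{L}_{\gamma/n,1,0}}. $$
Indeed, applying the seminorm bound on the concentric balls $B_k=B(x,2^{-k}\rho)$ and using nesting, $|f_{B_k}-f_{B_{k+1}}|\le \frac{|B_k|}{|B_{k+1}|}\cdot\frac1{|B_k|}\int_{B_k}|f-f_{B_k}|\le 2^n |B_k|^{\gamma/n}\|f\|_{\mathcal{L}_{\gamma/n,1,0}}$; summing the resulting geometric series (convergent precisely because $\gamma>0$) and letting $k\to\infty$, so that $f_{B_k}\to f(x)$ at the Lebesgue point $x$, yields the claim. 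Then, given two Lebesgue points $x,y$ and setting $\rho=|x-y|$, I would compare $f(x)$ and $f(y)$ through the common ball $B(x,2\rho)\supseteq B(y,\rho)$: the telescoping estimate bounds $|f(x)-f_{B(x,2\rho)}|$ and $|f(y)-f_{B(y,\rho)}|$, while nesting together with Hölder's inequality bounds $|f_{B(y,\rho)}-f_{B(x,2\rho)}|$, each term being $\lesssim_{n,\gamma}\rho^{\gamma}\|f\|_{\mathcal{L}_{\gamma/n,1,0}}$. Adding the three contributions gives $|f(x)-f(y)|\lesssim_{n,\gamma}|x-y|^{\gamma}\|f\|_{\mathcal{L}_{\gamma/n,1,0}}$, so $f$ restricted to the set of its Lebesgue points is $\gamma$-Hölder with constant $\lesssim_{n,\gamma}\|f\|_{\mathcal{L}_{\gamma/n,1,0}}$.

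The main point to handle carefully is the identification of representatives: an element of the Campanato space is an equivalence class of locally integrable functions modulo null sets, and the argument above produces one distinguished representative, namely the limit of averages $\lim_{\rho\to0}f_{B(x,\rho)}$, which is defined off a null set. Since the set of Lebesgue points is of full measure, hence dense, the $\gamma$-Hölder estimate on that set extends uniquely to a genuinely $\gamma$-Hölder function on all of $\mathbb{R}^n$ with the same constant; this extension is the desired representative in $Lip_\gamma$ and gives $\|f\|_{Lip_\gamma}\lesssim_{n,\gamma}\|f\|_{\mathcal{L}_{\gamma/n,1,0}}$. Beyond this bookkeeping the proof is routine; the convergence of the telescoping series uses $\gamma>0$, and the choice $s=\floor*{\gamma}=0$ (rather than a higher-degree approximating polynomial) is exactly what is consistent with $\gamma<1$.
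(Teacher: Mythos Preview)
Your argument is correct and is precisely the classical proof of this fact. Note, however, that the paper does not give its own proof of this theorem: it is stated in the preliminaries with a citation to pages 300--302 of \cite{Cuerva-Francia-Weightet-norm-inequalities-85}, and the argument you have written is essentially the one found there (reduction to $r=1$ and $r=\infty$ via Jensen, the trivial inclusion $Lip_\gamma\hookrightarrow\mathcal{L}_{\gamma/n,\infty,0}$, and the telescoping-over-dyadic-balls argument using Lebesgue differentiation for the converse). So there is nothing to compare against beyond the cited reference, with which your approach agrees.
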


In view of Theorems \ref{thm:dual-Hp=Campanato} and \ref{lem:Campanato=Lipschitz}, we get that, whenever $\frac{n}{n+1} < p < 1$, the dual of $H^p$ coincides with $Lip_{n(\frac{1}{p}-1)}$ with equivalent norms. 


\subsection{Muckenhoupt \texorpdfstring{$A_p$}{}-weights} \label{subsec:Ap-weights} 
$A_p$-weights were introduced and studied by Muckenhoupt \cite{Muckenhoupt-TAMS1972}. Since then, weighted boundedness of several classical operators has continued to be a problem much sought after. Let us define these weights. 
\begin{definition}[$A_p$-weights] \label{def:Ap-weights}
Let $\omega \in L^1_{loc}(\mathbb{R}^n)$ be a non-negative function. Given $1<p<\infty$, we say that $w \in A_p$ if 
$$ [w]_{A_p(\mathbb{R}^n)} := \sup_{Q} \left( \frac{1}{|Q|} \int_Q \omega(x) \, dx \right) \left( \frac{1}{|Q|} \int_Q w(x)^{1-p^{\prime}} \, dx \right)^{p-1} < \infty, $$ 
where the supremum is taken over all cubes $Q$ and $\frac{1}{p} + \frac{1}{p^{\prime}} = 1$. 

For $p = 1 $, we say that $w \in A_1$ if there is a constant $C>0$ such that 
$$ M\omega(x) \leq C \, \omega(x) \quad \text{ for almost every } x \in \mathbb{R}^n, $$ 
and we also define the class $\displaystyle A_{\infty} = \bigcup_{1\leq p < \infty} A_p$. 
\end{definition}

The following lemma is an extension of Fefferman--Stein lemma \cite{Fefferman-Stein-Hp-spaces-Acta-Math-1972} in the context of weights, and can be found, for example as Lemma 2.1.3 on page 58 in \cite{Lu-Ding-Yan-book-singular-int-2007}. 
\begin{lemma} \label{lem:weighted-Fefferman-Stein}
Suppose $\omega \in A_{\infty}$ and there exists $0 < p_0 < \infty$ such that $Mf \in L^{p_0}(\omega)$. Then $$ \int_{\mathbb{R}^n} (Mf(x))^p \omega(x) \, dx \leq C \int_{\mathbb{R}^n} (M^{\#} f(x))^p\omega(x) \, dx $$
holds for $ p_0 \leq p < \infty$.
\end{lemma}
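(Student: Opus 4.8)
The plan is to run the classical good-$\lambda$ argument of Fefferman--Stein, with Lebesgue measure replaced by the weight $\omega$, the price being a power loss supplied by the $A_\infty$ condition. First I would record the two structural facts that will be used. On one hand, $\omega\in A_\infty$ is equivalent to the existence of constants $C_0,\delta>0$, depending only on $n$ and $\omega$, such that
\begin{equation*}
\omega(E)\le C_0\left(\frac{|E|}{|Q|}\right)^{\delta}\omega(Q)
\end{equation*}
for every cube $Q$ and every measurable $E\subseteq Q$; this is the only place $\omega\in A_\infty$ is used. On the other hand, the hypothesis $Mf\in L^{p_0}(\omega)$ gives, via Chebyshev, $\omega(\{Mf>\lambda\})\le\lambda^{-p_0}\|Mf\|_{L^{p_0}(\omega)}^{p_0}<\infty$ for every $\lambda>0$; in particular each set $\{Mf>\lambda\}$ is open of finite $\omega$-measure and (outside degenerate configurations, disposed of by routine approximation) a proper subset of $\mathbb{R}^n$, hence admits a Whitney decomposition into pairwise disjoint dyadic cubes $\{Q_j\}_j$ whose union is $\{Mf>\lambda\}$ and each of which has a fixed dilate $\widetilde{Q}_j\supseteq Q_j$, $|\widetilde{Q}_j|\sim|Q_j|$, meeting $\{Mf\le\lambda\}$.

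The core of the argument is the good-$\lambda$ inequality: there is $c=c(n)>0$ such that for every $0<\gamma<1$ and every $\lambda>0$,
\begin{equation*}
\omega\big(\{Mf>2\lambda,\ M^{\#}f\le\gamma\lambda\}\big)\le C_0(c\gamma)^{\delta}\,\omega\big(\{Mf>\lambda\}\big).
\end{equation*}
To prove this, fix a Whitney cube $Q_j$; it contributes nothing unless it meets $\{M^{\#}f\le\gamma\lambda\}$, in which case I pick $\bar{x}_j\in Q_j$ with $M^{\#}f(\bar{x}_j)\le\gamma\lambda$. For $x\in Q_j$ I split $f=(f-f_{3\widetilde{Q}_j})\mathbbm{1}_{3\widetilde{Q}_j}+f_{3\widetilde{Q}_j}\mathbbm{1}_{3\widetilde{Q}_j}+f\mathbbm{1}_{(3\widetilde{Q}_j)^c}$; using that $\widetilde{Q}_j$ meets $\{Mf\le\lambda\}$, the last two pieces contribute at most $C\lambda$ to $Mf(x)$, so that $\{x\in Q_j:Mf(x)>2C\lambda\}\subseteq\{M((f-f_{3\widetilde{Q}_j})\mathbbm{1}_{3\widetilde{Q}_j})>c\lambda\}$. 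The weak $(1,1)$ bound for $M$ together with the trivial estimate
\begin{equation*}
\frac{1}{|3\widetilde{Q}_j|}\int_{3\widetilde{Q}_j}|f(y)-f_{3\widetilde{Q}_j}|\,dy\le M^{\#}f(\bar{x}_j)\le\gamma\lambda
\end{equation*}
(valid because the cube $3\widetilde{Q}_j$ contains $\bar{x}_j$) then yields $|\{x\in Q_j:Mf(x)>2C\lambda\}|\le c\gamma|Q_j|$. Applying the $A_\infty$ comparability on each $Q_j$, summing over $j$ (the $Q_j$ are disjoint with union $\{Mf>\lambda\}$), and rescaling $2C\lambda$ down to $2\lambda$ gives the displayed good-$\lambda$ inequality.

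It remains to integrate. For $N<\infty$ put $\Phi(N):=p\int_0^N\lambda^{p-1}\omega(\{Mf>2\lambda\})\,d\lambda$, which is finite because $p\ge p_0$ and $\omega(\{Mf>2\lambda\})\le(2\lambda)^{-p_0}\|Mf\|_{L^{p_0}(\omega)}^{p_0}$ makes the integrand $O(\lambda^{p-1-p_0})$ near the origin (when $p=p_0$ one simply has $\Phi(N)\le\Phi(\infty)=2^{-p_0}\int_{\mathbb{R}^n}(Mf)^{p_0}\omega<\infty$). Bounding $\omega(\{Mf>2\lambda\})\le\omega(\{Mf>2\lambda,M^{\#}f\le\gamma\lambda\})+\omega(\{M^{\#}f>\gamma\lambda\})$, multiplying by $p\lambda^{p-1}$, integrating over $(0,N)$, and using the good-$\lambda$ inequality in the first term (with the substitution $\lambda\mapsto2\lambda$) and the distribution-function formula in the second, I obtain
\begin{equation*}
\Phi(N)\le C_0(c\gamma)^{\delta}2^{p}\,\Phi(N)+\gamma^{-p}\int_{\mathbb{R}^n}(M^{\#}f(x))^{p}\,\omega(x)\,dx.
\end{equation*}
Fixing $\gamma$ so small that $C_0(c\gamma)^{\delta}2^{p}\le\tfrac12$ permits absorbing the first term (legitimately, since $\Phi(N)<\infty$), whence $\Phi(N)\le2\gamma^{-p}\int_{\mathbb{R}^n}(M^{\#}f)^{p}\omega$ uniformly in $N$; letting $N\to\infty$ by monotone convergence and using $\int_{\mathbb{R}^n}(Mf)^{p}\omega=2^{p}\Phi(\infty)$ gives the assertion with $C=2^{p+1}\gamma^{-p}$ (the inequality being trivial if its right-hand side is infinite).

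The main obstacle is the good-$\lambda$ step, and specifically its unweighted substance: showing that on a Whitney cube of $\{Mf>\lambda\}$ the set $\{Mf>2\lambda\}$ lies, up to a subset of small Lebesgue measure, inside a super-level set of the local maximal function of the oscillation $f-f_{3\widetilde{Q}_j}$. Pinning down the dilation factors and absolute constants in that comparison, and disposing of the degenerate cases ($\{Mf>\lambda\}$ failing to be a proper open set, or the right-hand side of the claimed inequality being infinite), is where the real care is needed; the remaining steps are routine distributional bookkeeping.
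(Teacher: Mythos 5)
The paper does not prove this lemma; it is quoted verbatim (as Lemma~2.1.3, p.~58, of the Lu--Ding--Yan monograph), and the argument in that reference is precisely the good-$\lambda$ method you reproduce. Your proof is complete and correct: the $A_\infty$ comparability $\omega(E)\le C_0(|E|/|Q|)^\delta\omega(Q)$ is the right tool, the Calder\'on--Zygmund/Whitney split of $f$ on $3\widetilde Q_j$ together with weak~$(1,1)$ for $M$ gives the Lebesgue-measure smallness on each Whitney cube, and the $\Phi(N)$ truncation with the $p=p_0$ caveat handles the absorption legitimately. One incidental simplification: the ``degenerate configuration'' $\{Mf>\lambda\}=\mathbb{R}^n$ that you propose to dispose of by approximation in fact cannot occur here, since any nonzero $A_\infty$ weight satisfies $\omega(\mathbb{R}^n)=\infty$ (apply the $A_\infty$ comparability with $E$ a fixed cube inside its dilates $Q=2^kE$, so that $\omega(2^kE)\ge C_0^{-1}2^{nk\delta}\omega(E)\to\infty$), whereas $Mf\in L^{p_0}(\omega)$ forces $\omega(\{Mf>\lambda\})<\infty$ for each $\lambda>0$; hence $\{Mf>\lambda\}$ is automatically a proper open set and the Whitney decomposition applies with no approximation step.
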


Recall also that, $|f| \leq Mf$ pointwise almost everywhere, and thus in view of Lemma \ref{lem:weighted-Fefferman-Stein} with $\omega = 1$, we have for $1 < p < \infty,$ 
\begin{align} \label{inequality:Sharp-maximal}
\| f \|_{L^p} \leq \| Mf \|_{L^p} \leq C \| M^{\#} f \|_{L^p} \ \ \text{for any} \ \ f \in L^p.
\end{align}

Let us record here another important result which will be useful later. It is known (see, Theorem 2.16 in Chapter IV on page 407 in \cite{Cuerva-Francia-Weightet-norm-inequalities-85}) that for $1<p<\infty$, 
\begin{align*}
(M\omega)^{1/p} \in A_1, \quad \text{for any non-negative function } \omega \in L^1_{loc}(\mathbb{R}^n).
\end{align*} 

Before we move on, let us also introduce the fractional maximal function $[f]_{\gamma , l}^{*}$, defined by 
\begin{align} 
[f]_{\gamma , l}^{*}(x) = \sup_{x\in Q} \left(\frac{1}{|Q|^{1-\frac{\gamma l}{n}}}\int_{Q}|f(y)|^l \, dy \right)^{\frac{1}{l}}. 
\end{align}
for $l \geq 1$ and $0< \gamma < n$, and the supremum is taken over all cubes $Q$.

In \cite{Chanillo-note-commutators-IUMJ-1982}, Chanillo proved the following boundedness result for $[f]_{\gamma , l}^{*}$. 
\begin{lemma}[\cite{Chanillo-note-commutators-IUMJ-1982}] \label{lem:fractional-maximal-Lp-Lq-bound}
For $1<l<p<\frac{n}{\gamma}$, $\frac{1}{q} = \frac{1}{p} - \frac{\gamma}{n}$, and $0< \gamma < n$, the fractional maximal function $[f]_{\gamma , l}^{*}$ is bounded from $L^p(\mathbb{R}^n)$ to $L^q(\mathbb{R}^n)$.
\end{lemma}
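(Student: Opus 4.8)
The plan is to reduce the claim to the classical $L^a\to L^b$ mapping property of the fractional maximal operator (equivalently, of the Riesz potential). For $0<\alpha<n$, write
\begin{align*}
M_{\alpha}g(x):=\sup_{x\in Q}\frac{1}{|Q|^{1-\alpha/n}}\int_{Q}|g(y)|\,dy
\end{align*}
for the fractional maximal operator of order $\alpha$, the supremum being over cubes $Q\ni x$. The first observation — and really the only point specific to the statement at hand — is that since $l\ge 1$ and the hypothesis $l<n/\gamma$ forces $\gamma l<n$, one has the exact identity
\begin{align*}
\bigl([f]_{\gamma,l}^{*}(x)\bigr)^{l}=\sup_{x\in Q}\frac{1}{|Q|^{1-\gamma l/n}}\int_{Q}|f(y)|^{l}\,dy=M_{\gamma l}\bigl(|f|^{l}\bigr)(x).
\end{align*}
Consequently $\bigl\|[f]_{\gamma,l}^{*}\bigr\|_{L^q}^{l}=\bigl\|M_{\gamma l}(|f|^{l})\bigr\|_{L^{q/l}}$, and it suffices to show the right-hand side is $\lesssim\|f\|_{L^p}^{l}$.

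Next I would invoke the standard pointwise domination $M_{\alpha}g\lesssim I_{\alpha}(|g|)$, valid for $0<\alpha<n$: if $x\in Q$ and $Q$ has side-length $r$, then $|x-y|\lesssim r$ for all $y\in Q$, hence $|x-y|^{\alpha-n}\gtrsim r^{\alpha-n}\sim|Q|^{\alpha/n-1}$ (this is where $\alpha<n$ is used), and integrating over $Q$ and taking the supremum over $Q\ni x$ gives the bound. Applying this with $\alpha=\gamma l$ and $g=|f|^{l}$ reduces everything to the Hardy--Littlewood--Sobolev inequality for $I_{\gamma l}$. (Alternatively, one may simply quote the $L^a\to L^b$ boundedness of $M_{\gamma l}$ directly; a self-contained route goes through Hedberg's inequality $M_{\alpha}g\lesssim (Mg)^{1-\alpha a/n}\|g\|_{L^a}^{\alpha a/n}$ and the $L^a$-boundedness of the Hardy--Littlewood maximal function $M$, but this is essentially the same argument.)

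Finally I would check that the exponents line up. Applying Hardy--Littlewood--Sobolev, $\|I_{\gamma l}h\|_{L^b}\lesssim\|h\|_{L^a}$ for $1<a<n/(\gamma l)$ and $\tfrac{1}{b}=\tfrac{1}{a}-\tfrac{\gamma l}{n}$, with $h=|f|^{l}$ gives $a=p/l$ and $\|h\|_{L^a}=\|f\|_{L^p}^{l}$. The hypothesis $1<l<p<n/\gamma$ is exactly what is needed to guarantee $a=p/l>1$ and $a=p/l<n/(\gamma l)$, and the output exponent satisfies $\tfrac{1}{b}=\tfrac{l}{p}-\tfrac{\gamma l}{n}=l\bigl(\tfrac{1}{p}-\tfrac{\gamma}{n}\bigr)=\tfrac{l}{q}$, i.e. $b=q/l$. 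Combining,
\begin{align*}
\bigl\|[f]_{\gamma,l}^{*}\bigr\|_{L^q}^{l}=\bigl\|M_{\gamma l}(|f|^{l})\bigr\|_{L^{q/l}}\lesssim\bigl\|I_{\gamma l}(|f|^{l})\bigr\|_{L^{q/l}}\lesssim\|f\|_{L^p}^{l},
\end{align*}
and taking $l$-th roots yields the asserted estimate.

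I do not expect any serious obstacle here. Once the identity $\bigl([f]_{\gamma,l}^{*}\bigr)^{l}=M_{\gamma l}(|f|^{l})$ is in hand, the proof is pure bookkeeping layered on top of the classical Hardy--Littlewood--Sobolev theorem; the only thing to watch is that each exponent restriction ($l>1$, $l<p$, $p<n/\gamma$) is genuinely used and that the three conditions are mutually consistent, which they are.
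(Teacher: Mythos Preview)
Your argument is correct. The paper itself does not supply a proof of this lemma at all: it merely quotes the result from Chanillo \cite{Chanillo-note-commutators-IUMJ-1982} and uses it as a black box. So there is nothing to compare against in the paper proper.

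For the record, the route you take is the standard one and exactly what Chanillo's original argument amounts to: recognise $\bigl([f]_{\gamma,l}^{*}\bigr)^{l}=M_{\gamma l}(|f|^{l})$, then invoke the $L^{p/l}\to L^{q/l}$ bound for the fractional maximal operator (or the Riesz potential $I_{\gamma l}$ via the pointwise domination). Your exponent bookkeeping is clean: $l<p$ gives $p/l>1$, $p<n/\gamma$ gives $p/l<n/(\gamma l)$ so that Hardy--Littlewood--Sobolev applies, and $\gamma l<n$ (from $l<p<n/\gamma$) ensures $M_{\gamma l}$ is well defined. Nothing is missing.
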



\subsection{Basic estimates} 
\label{subsec:basic-estimate-Omega-function} 

With $T_{\beta} = T_1 + T_2$, where $T_1$ is given by \eqref{def:operator-decomposition}, we start with recalling Theorem 2.1 of \cite{Chen-Guo-Extension-CZ-JFA-2021} about $L^q$-estimate of $T_1 f$, and Lemma 2.2 of \cite{Yu-Jiu-Li-CZ-JFA-2021} about $L^q$-estimate of $T_2 f$, which we shall need at several places in our analysis. 
\begin{theorem} [\cite{Chen-Guo-Extension-CZ-JFA-2021}]
\label{thm2.1:Chen-Guo-Extension-CZ-JFA-2021}
Let $\Omega$ satisfy condition \eqref{conditions:main-Omega-function} and $1<q<\infty$. There exists a constant $C> 0$ such that
\begin{align*} 
\left \|T_1 f\right\|_{{L^{q}}} \leq C \|f\|_{{L^{q}}}, 
\end{align*} 
holds true for all $f \in L^{q}(\mathbb{R}^{n})$ and $ 0 < \beta < n$.
\end{theorem}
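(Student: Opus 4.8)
The plan is to adapt the dyadic-decomposition technique for rough convolution kernels due to Duoandikoetxea--Rubio de Francia \cite{Duoandikoetxea-Rubio-maximal-singular-int-Inventione-1986} and Seeger \cite{Seeger-Singular-integral-rough-convolution-kernels-JAMS-1996}, keeping careful track of all constants so that the resulting bound is uniform in $\beta \in (0,n)$. First I would fix a radial bump $\psi \in C_c^{\infty}(\mathbb{R}^n)$ supported in $\{1/2 \le |y| \le 2\}$ with $\sum_{j \in \mathbb{Z}} \psi(2^{-j}y) = 1$ for $y \neq 0$, and decompose the kernel of $T_1$ (see \eqref{def:operator-decomposition}) as $K_1 = \sum_{j \le N_\beta} K_j$, where $K_j(y) = \frac{\Omega(y)}{|y|^{n-\beta}}\chi_\beta(|y|)\,\psi(2^{-j}y)$ and $N_\beta \sim \log_2(1/\beta)$ is so large that $K_j \equiv 0$ for $j > N_\beta$. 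For all indices $j$ with $2^{j+1} \le 1/\beta$ one has $\chi_\beta \equiv 1$ on the support of $\psi(2^{-j}\cdot)$, and then homogeneity of $\Omega$ gives $K_j(y) = 2^{-j(n-\beta)}\Phi(2^{-j}y)$ with $\Phi(z) = \frac{\Omega(z)}{|z|^{n-\beta}}\psi(|z|)$; the remaining (at most a bounded number of) pieces are convolutions with $L^1$-kernels of norm $\lesssim \beta^{n-\beta}\beta^{-n} = \beta^{-\beta} \lesssim 1$, hence give operators bounded on every $L^q$, $1<q<\infty$, uniformly in $\beta$ by Young's inequality, and can be set aside. It remains to treat $S := \sum_{j} K_j * (\cdot)$ over the ``good'' indices.

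The heart of the matter is a pair of Fourier estimates for $\Phi$ that are \emph{uniform in $\beta$}. Since $\Phi$ is bounded with compact support and $\int_{\mathbb{R}^n} \Phi = c_n \int_{S^{n-1}} \Omega\,d\sigma = 0$ by the cancellation in \eqref{conditions:main-Omega-function}, one gets at once $|\widehat{\Phi}(\xi)| \lesssim \min\{1,|\xi|\}$, with implicit constant depending only on $\|\Omega\|_{L^\infty(S^{n-1})}$ and $n$. For decay at infinity I would pass to polar coordinates: the radial integral becomes $\int H(r)\,e^{-2\pi i r (\theta\cdot\xi)}\,dr$ with $H(r) = r^{\beta-1}\psi(r)$ smooth, supported in $[1/2,2]$, and of $C^1$-norm bounded uniformly for $\beta \in (0,n)$. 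This is exactly the point at which \emph{no smoothness of $\Omega$ is needed}: after the dyadic localisation the radial part is automatically smooth, which plays the role of the ``regularisation of the radial variable'' of \cite{Chen-Guo-Extension-CZ-JFA-2021}. One integration by parts bounds the radial integral by $\min\{1,|\theta\cdot\xi|^{-1}\}$, and integrating against $\Omega \in L^\infty$ over $S^{n-1}$ together with the elementary bound $\sigma(\{\theta : |\theta\cdot\xi|/|\xi| < \rho\}) \lesssim \rho$ yields $|\widehat{\Phi}(\xi)| \lesssim |\xi|^{-1/2}$ for $|\xi| \ge 1$. Thus $|\widehat{\Phi}(\xi)| \lesssim \min\{|\xi|,\,1,\,|\xi|^{-1/2}\}$, uniformly in $\beta$.

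Next I would deduce the $L^2$-bound by an almost-orthogonality argument. Let $\{\Delta_k\}$ be Littlewood--Paley projections adapted to $|\xi| \sim 2^k$, and write $S = \sum_{i \in \mathbb{Z}} U_i$ with $U_i f = \sum_{j} K_j * \Delta_{i-j}f$, so that on the frequency support of the $j$-th summand one has $|2^j\xi| \sim 2^i$ and hence, since $\widehat{K_j}(\xi) = 2^{\beta j}\widehat{\Phi}(2^j\xi)$, $|\widehat{K_j}(\xi)| \lesssim 2^{\beta j}\,2^{-|i|/2}$. As for each $\xi$ only $O(1)$ of the $\Delta_{i-j}$ are nonzero, Plancherel gives $\|U_i f\|_{L^2}^2 \lesssim 2^{-|i|}\sum_{j} 2^{2\beta j}\|\Delta_{i-j}f\|_{L^2}^2 \le 2^{-|i|}\big(\max_{j \le N_\beta} 2^{2\beta j}\big)\|f\|_{L^2}^2$. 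The \emph{crucial} observation for uniformity is that the growth factors $2^{\beta j}$ must \emph{not} be summed over $j$ (that would cost a factor $\sim \beta^{-1}$), but absorbed as $\max_{j \le N_\beta} 2^{\beta j} = 2^{\beta N_\beta} \sim \beta^{-\beta} = e^{-\beta \ln \beta}$, which is bounded by $e^{1/e}$ for every $\beta \in (0,n)$. Therefore $\|U_i f\|_{L^2} \lesssim 2^{-|i|/2}\|f\|_{L^2}$ and $\|Sf\|_{L^2} \le \sum_i \|U_i f\|_{L^2} \lesssim \|f\|_{L^2}$, with a constant independent of $\beta$.

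Finally, to reach all $q \in (1,\infty)$ --- the kernel being too rough for the classical Calder\'on--Zygmund machinery --- I would interpolate. Using $|K_j(y)| \lesssim 2^{\beta j}|y|^{-n}\,\mathbbm{1}_{\{|y| \sim 2^j\}}$ one has $|K_j| * |g| \lesssim 2^{\beta j}\,Mg$ pointwise, so by Littlewood--Paley theory (the summands of $U_i f$ having almost disjoint frequency supports) and the Fefferman--Stein vector-valued maximal inequality, $\|U_i f\|_{L^q} \lesssim \big(\max_{j \le N_\beta} 2^{\beta j}\big)\,\big\|\big(\sum_j |\Delta_{i-j}f|^2\big)^{1/2}\big\|_{L^q} \lesssim \|f\|_{L^q}$, uniformly in $i$ and $\beta$. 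Interpolating this with $\|U_i\|_{L^2 \to L^2} \lesssim 2^{-|i|/2}$ gives $\|U_i\|_{L^q \to L^q} \lesssim 2^{-c_q|i|}$ with $c_q > 0$ for every $q \in (1,\infty)$, and summing over $i$ yields $\|T_1 f\|_{L^q} \le \|Sf\|_{L^q} + C\|f\|_{L^q} \lesssim \|f\|_{L^q}$ with a constant depending only on $n$, $q$, and $\|\Omega\|_{L^\infty(S^{n-1})}$. I expect the main obstacle to be precisely this $\beta$-uniform bookkeeping: since the number of dyadic pieces tends to infinity and their $L^1$-norms grow like $2^{\beta j}$, every geometric series in the argument must be arranged to sum in a $\beta$-free way, which forces the almost-orthogonal (rather than naive triangle-inequality) organisation of the sum; the Fourier-decay step, though it needs only $\Omega \in L^\infty$ thanks to the smooth radial factor, must likewise be carried through with $\beta$-independent constants.
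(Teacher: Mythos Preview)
The paper does not give its own proof of this statement: it is quoted verbatim as Theorem~2.1 of \cite{Chen-Guo-Extension-CZ-JFA-2021} and used as a black box throughout. So there is nothing in the paper to compare your argument against line by line. That said, the introduction describes the method of \cite{Chen-Guo-Extension-CZ-JFA-2021} as adapting the Duoandikoetxea--Rubio de Francia/Seeger dyadic scheme together with ``the van der Corput lemma and a regularisation of the radial variable of the integral kernel,'' and your sketch follows precisely this route: dyadic decomposition of the truncated kernel, Fourier decay of the building block $\Phi$ coming from the smooth radial factor $r^{\beta-1}\psi(r)$ (no smoothness of $\Omega$ needed), an almost-orthogonality $L^2$ bound, and interpolation via Littlewood--Paley and the Fefferman--Stein vector-valued maximal inequality. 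Your argument is correct, and you have identified the one genuinely $\beta$-sensitive point --- that the growth factors $2^{\beta j}$ must be controlled by $\max_{j\le N_\beta}2^{\beta j}\sim\beta^{-\beta}\lesssim 1$ rather than summed --- which is exactly the mechanism that makes the bound uniform.

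One small remark: where you invoke a single integration by parts on the radial integral $\int H(r)e^{-2\pi i r(\theta\cdot\xi)}\,dr$, the cited paper uses the van der Corput lemma. Here the phase is linear, so integration by parts is already sufficient and in fact simpler; the two give the same $\min\{1,|\theta\cdot\xi|^{-1}\}$ bound with constants depending only on the (uniformly bounded) $C^1$-norm of $H$. Also, be a little careful with the notation for $\psi$ (you use both $\psi(2^{-j}y)$ and $\psi(|z|)$); this is harmless once $\psi$ is declared radial, but worth tidying.
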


\begin{lemma}[\cite{Yu-Jiu-Li-CZ-JFA-2021}] \label{lem2.2:Yu-Jiu-Li-CZ-JFA-2021} 
Let $\Omega \in L^\infty (S^{n-1})$ and $1 < q < \infty$. There exists a constant $C>0$ such that 
\begin{align*}
\left\| T_2 f \right\|_{L^q} \leq C \left\| \Omega \right\|_{L^\infty (S^{n-1})} \frac{\beta^{\frac{(q-1) n}{q}}}{\sqrt[q]{(n(q-1)-\beta q)}}\|f\|_{L^1}, 
\end{align*} 
holds true for all $f\in L^{1}(\mathbb{R}^{n})$ and $0 < \beta < \frac{(q-1) n}{q}$. 
\end{lemma}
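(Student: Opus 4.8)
The plan is to observe that, unlike $T_\beta$ itself, the operator $T_2$ has a convolution kernel supported away from the origin, hence lying in $L^q(\mathbb{R}^n)$; the estimate is then nothing more than Young's convolution inequality combined with an elementary radial integral, the restriction $0<\beta<\frac{(q-1)n}{q}$ being exactly the threshold at which that integral converges.

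First I would record that $T_2 f = K_2 * f$, where
\[
K_2(y) = \frac{\Omega(y)}{|y|^{n-\beta}}\bigl(1-\chi_\beta(|y|)\bigr),
\]
with $\Omega$ understood as its degree-zero homogeneous extension to $\mathbb{R}^n\setminus\{0\}$, as in the definition of $T_\beta$. Since $\chi_\beta(|y|)=\chi(\beta|y|)=1$ for $\beta|y|\le 1$, the factor $1-\chi_\beta(|y|)$ vanishes on $\{|y|\le 1/\beta\}$; in particular $K_2$ has no singularity, the principal value in $T_2=T_\beta-T_1$ is vacuous, and $T_2 f$ is a genuine convolution. Young's inequality, applied with the exponent relation $1+\frac1q=\frac1q+\frac11$, then gives $\|T_2 f\|_{L^q}\le \|K_2\|_{L^q}\,\|f\|_{L^1}$, so everything reduces to bounding $\|K_2\|_{L^q}$.

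Next, using $|\Omega(y)|\le\|\Omega\|_{L^\infty(S^{n-1})}$ and $0\le 1-\chi_\beta(|y|)\le \mathbbm{1}_{\{|y|>1/\beta\}}$, I would pass to polar coordinates:
\[
\|K_2\|_{L^q}^q \le \|\Omega\|_{L^\infty(S^{n-1})}^q \int_{|y|>1/\beta}|y|^{-(n-\beta)q}\,dy = \|\Omega\|_{L^\infty(S^{n-1})}^q\,\sigma(S^{n-1})\int_{1/\beta}^{\infty} r^{\,n-1-(n-\beta)q}\,dr.
\]
The hypothesis $\beta<\frac{(q-1)n}{q}$ is equivalent to $n-1-(n-\beta)q<-1$, which is precisely what makes the last integral finite, and evaluating it gives $\int_{1/\beta}^\infty r^{\,n-1-(n-\beta)q}\,dr = \frac{\beta^{\,(n-\beta)q-n}}{(n-\beta)q-n} = \frac{\beta^{\,n(q-1)-\beta q}}{n(q-1)-\beta q}$. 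Taking $q$-th roots and writing $\frac{n(q-1)-\beta q}{q}=\frac{n(q-1)}{q}-\beta$,
\[
\|K_2\|_{L^q} \le \|\Omega\|_{L^\infty(S^{n-1})}\,\sigma(S^{n-1})^{1/q}\,\beta^{-\beta}\,\frac{\beta^{\,n(q-1)/q}}{\sqrt[q]{n(q-1)-\beta q}}.
\]

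Finally I would absorb the stray factor $\beta^{-\beta}$ into the constant: since $-\beta\log\beta\le \frac1e$ for every $\beta>0$, we have $\beta^{-\beta}\le e^{1/e}$, and $\sigma(S^{n-1})^{1/q}\le\max\{1,\sigma(S^{n-1})\}$ for $q>1$; combining with the previous two displays yields the claim with $C=e^{1/e}\max\{1,\sigma(S^{n-1})\}$, a constant depending only on $n$. There is no genuine obstacle in this argument—the whole of it is the identification of $K_2$, Young's inequality, and one radial integral—and the only point that needs a second's thought is matching the exponent $\frac{n(q-1)}{q}-\beta$ that falls out naturally against the stated exponent $\frac{n(q-1)}{q}$, which is exactly what the uniform bound $\beta^{-\beta}\le e^{1/e}$ supplies.
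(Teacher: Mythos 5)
Your proposal is correct and is essentially the same argument as the paper's: the paper applies Minkowski's integral inequality to the convolution, which, since $f\in L^1$, is precisely Young's inequality $\|K_2*f\|_{L^q}\le\|K_2\|_{L^q}\|f\|_{L^1}$, and then evaluates the same radial integral over $\{|y|>1/\beta\}$. One small remark: the stray factor that falls out is $\beta^{-\beta}=(1/\beta)^\beta$ as in your computation, whereas the paper's displayed intermediate step shows $(1/\beta)^{\beta/q}$, which appears to be a minor typographical slip there; either way it is uniformly bounded, as you correctly argue.
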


Since the proof of Lemma \ref{lem2.2:Yu-Jiu-Li-CZ-JFA-2021} is elementary, we write the details here. 
\begin{proof}[Proof of Lemma \ref{lem2.2:Yu-Jiu-Li-CZ-JFA-2021} ] 
By definition of $T_2 f$, we have 
\begin{align*} 
\left| T_2 f(x) \right| &= \left| \int_{\mathbb{R}^n} \frac{\Omega(x-y)}{|x-y|^{n-\beta}} \left( 1 - \chi_\beta (|x-y|) \right) f(y) \, dy \right| \\ 
& \leq \left\| \Omega \right\|_{L^\infty (S^{n-1})} \int_{|x-y| \geq \frac{1}{\beta}} |x-y|^{\beta - n} \left| f(y) \right| dy, 
\end{align*} 
which implies that for $0 < \beta < \frac{(q-1) n}{q}$, 
\begin{align*} 
\left\| T_2 f \right\|_{L^q} & \leq \left\| \Omega \right\|_{L^\infty (S^{n-1})} \left( \int_{\mathbb{R}^n} \left| \int_{|x-y| \geq \frac{1}{\beta}} |x-y|^{\beta - n} \left| f(y) \right| dy \right|^q dx \right)^{1/q} \\
& \leq \left\| \Omega \right\|_{L^\infty (S^{n-1})} \int_{\mathbb{R}^n} \left| f(y) \right| \left( \int_{|x-y| \geq \frac{1}{\beta}} |x-y|^{(\beta - n)q} \, dx \right)^{1/q} dy \\ 
& = \left\| \Omega \right\|_{L^\infty (S^{n-1})} \frac{ \left( \frac{1}{\beta} \right)^{\beta / q} \beta^{\frac{(q-1) n}{q}}}{\sqrt[q]{(n(q-1)-\beta q)}}\|f\|_{L^1} \\ 
& \lesssim \left\| \Omega \right\|_{L^\infty (S^{n-1})} \frac{\beta^{\frac{(q-1) n}{q}}}{\sqrt[q]{(n(q-1)-\beta q)}}\|f\|_{L^1}, 
\end{align*} 
uniformly in $0 < \beta < \frac{(q-1) n}{q}$. 
\end{proof}

Also, we shall frequently make use of the following basic estimate whose proof is standard but we write the details for the sake of self-containment. 

\begin{lemma} \label{lem:Main-lemma}
Let $\Omega \in L^\infty (S^{n-1})$ be defined on $\mathbb{R}^n \setminus \{0\}$ as a homogeneous function of degree $0$. There exists a constant $C>0$ such that 
\begin{align*}
\left| \frac{\Omega(w-v)}{|w-v|^{n-\beta}} \chi_{\beta}(|w-v|) - \frac{\Omega(w)}{|w|^{n-\beta}} \chi_{\beta}(|w|) \right| \leq C \left( \frac{|v|}{|w|^{n+1}} \left\| \Omega \right\|_{L^\infty (S^{n-1})} + \frac{1}{|w|^n} \omega_{\infty} \left( 2 \frac{|v|}{|w|} \right) \right)
\end{align*} 
for all $0 < \beta < n$, and for every $|v|\leq r$ and $|w|\geq 2r.$ 
\end{lemma}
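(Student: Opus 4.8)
The plan is to prove this pointwise kernel estimate by splitting into the two natural regimes dictated by the cutoff $\chi_\beta$, and in each regime estimating the difference by adding and subtracting an intermediate term so as to separate the contribution of the radial part $|\cdot|^{\beta-n}\chi_\beta(|\cdot|)$ from the angular part $\Omega$. Write $K_\beta(z) = \frac{\Omega(z)}{|z|^{n-\beta}}\chi_\beta(|z|)$. Since $|v| \le r$ and $|w| \ge 2r$, we have $|w-v| \sim |w|$; more precisely $\tfrac12 |w| \le |w-v| \le \tfrac32|w|$, and also $|v|/|w| \le 1/2$, so $\omega_\infty(2|v|/|w|)$ makes sense (the argument is $\le 1$). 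First I would record the triangle-inequality decomposition
\begin{align*}
\left| K_\beta(w-v) - K_\beta(w) \right|
&\le \left| \Omega(w-v) \right| \left| \frac{\chi_\beta(|w-v|)}{|w-v|^{n-\beta}} - \frac{\chi_\beta(|w|)}{|w|^{n-\beta}} \right| \\
&\quad + \frac{\chi_\beta(|w|)}{|w|^{n-\beta}} \left| \Omega(w-v) - \Omega(w) \right|.
\end{align*}
The second term is immediately bounded: $\chi_\beta(|w|)|w|^{\beta-n} \le |w|^{\beta-n} \le |w|^{-n}$ on the support (using $\chi_\beta \le 1$ and $\beta > 0$... actually one should be a bit careful: $|w|^{\beta}$ could exceed $1$, but $\chi_\beta(|w|) \ne 0$ forces $|w| \le 2/\beta$, hence $|w|^\beta \le 2^\beta \le 2$, so $\chi_\beta(|w|)|w|^{\beta-n} \lesssim |w|^{-n}$), and since $\Omega$ is homogeneous of degree $0$, $|\Omega(w-v)-\Omega(w)| = |\Omega(\tfrac{w-v}{|w-v|}) - \Omega(\tfrac{w}{|w|})| \le \omega_\infty(\delta)$ where $\delta = \big|\tfrac{w-v}{|w-v|} - \tfrac{w}{|w|}\big|$. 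A standard computation shows $\big|\tfrac{a}{|a|} - \tfrac{b}{|b|}\big| \le \tfrac{2|a-b|}{\max(|a|,|b|)}$, so here $\delta \le 2|v|/|w|$, and since $\omega_\infty$ is nondecreasing this gives exactly the term $|w|^{-n}\omega_\infty(2|v|/|w|)$.

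For the first term, since $|\Omega(w-v)| \le \|\Omega\|_{L^\infty(S^{n-1})}$, it remains to bound the radial difference $g(|w-v|) - g(|w|)$ where $g(s) = s^{\beta-n}\chi_\beta(s)$. Here I split into cases. If both $|w| \ge 2/\beta$ and $|w-v| \ge 2/\beta$, then $g(|w-v|) = g(|w|) = 0$ and there is nothing to prove. Otherwise, since $|w-v| \ge |w|/2$, if $|w-v| \ge 2/\beta$ then $|w| \ge \tfrac12 |w-v|$ could still be small — so instead I note that the relevant range is when $\min(|w|,|w-v|) \le 2/\beta$, and because of the comparability $|w-v|\sim|w|$ this forces both $|w|$ and $|w-v|$ to be $\lesssim 1/\beta$ (up to the factor $3/2$, namely $|w| \le 4/\beta$). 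Then I apply the mean value theorem to $g$ on the segment $[|w|, |w-v|]$ (or $[|w-v|,|w|]$), whose length is at most $|v|$: $|g(|w-v|) - g(|w|)| \le |v| \sup_{s} |g'(s)|$ where the sup is over $s$ between $|w|$ and $|w-v|$, hence $s \ge |w|/2$. Now $g'(s) = (\beta-n)s^{\beta-n-1}\chi_\beta(s) + s^{\beta-n}\beta\chi'(\beta s)$, so $|g'(s)| \le (n-\beta) s^{\beta-n-1}\chi_\beta(s) + 2\beta s^{\beta-n}\mathbbm{1}_{1 \le \beta s \le 2}$. On the relevant range $s \sim |w|$ and $\beta s \lesssim 1$ so $s^\beta \lesssim 1$; also in the second piece $\beta s \le 2$ gives $\beta \le 2/s$, so $2\beta s^{\beta-n} \le 4 s^{\beta - n - 1} \lesssim s^{-n-1}$, and the first piece is $\le n s^{\beta-n-1} \lesssim s^{-n-1}$ too. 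Altogether $|g'(s)| \lesssim s^{-n-1} \lesssim |w|^{-n-1}$, and multiplying by $|v|$ yields the term $|v| |w|^{-n-1}\|\Omega\|_{L^\infty}$. Combining the two terms gives the claimed bound with a constant $C = C(n)$.

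I expect the only mildly delicate point to be the careful bookkeeping of the cutoff: ensuring that $\chi_\beta(|w-v|)$ and $\chi_\beta(|w|)$ are simultaneously controlled using $|w-v|\sim|w|$, and that factors like $|w|^\beta$ or $\beta|w|$ are harmless precisely on the support of the relevant cutoffs, uniformly in $\beta\in(0,n)$. The key geometric input is $|w-v|\sim|w|$ (from $|v|\le r \le |w|/2$), which simultaneously (a) makes the angular difference of $\Omega$ controllable by $\omega_\infty$ of a comparable argument, (b) keeps the two radial variables in the same dyadic regime so that $\chi_\beta$ evaluated at either is essentially the same, and (c) lets the mean value theorem estimate on $g$ localize to $s\gtrsim|w|$. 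The uniformity in $\beta$ comes for free because every bound on $g$ and $g'$ above is $\le C(n)$ times a power of $|w|$ with no surviving $\beta$-dependence. There is no real obstacle; it is a routine but careful exercise in splitting and the mean value theorem.
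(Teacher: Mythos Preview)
Your argument is correct and follows essentially the same approach as the paper: the angular term is handled identically via $\omega_\infty$, and the radial/cutoff terms via the mean value theorem together with the observation that $|w|^\beta \lesssim (2/\beta)^\beta \lesssim_n 1$ on the support of $\chi_\beta$. The only organizational difference is that the paper splits into three pieces $I_1,I_2,I_3$ (separating the power $|\cdot|^{\beta-n}$ from the cutoff $\chi_\beta$), whereas you group these into the single radial function $g(s)=s^{\beta-n}\chi_\beta(s)$ and apply the mean value theorem once; this is a minor and perfectly valid streamlining.
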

\begin{proof}
Note first that for $|v|\leq r$ and $|w|\geq 2r$, we have $\frac{|w|}{2} \leq |w-v| \leq \frac{3 |w|}{2}$, so that $|w-v|\sim |w|$. Now, 
\begin{align*}
& \left| \frac{\Omega(w-v)}{|w-v|^{n-\beta}}\chi_{\beta}(|w-v|)-\frac{\Omega(w)}{|w|^{n-\beta}}\chi_{\beta}(|w|) \right| \\
& \leq \left|\Omega(w-v)\chi_{\beta}(|w-v|)\left( \frac{1}{|w-v|^{n-\beta}}-\frac{1}{|w|^{n-\beta}} \right) \right|+\left| \frac{\chi_{\beta}(|w-v|)}{|w|^{n-\beta}}(\Omega(w-v)-\Omega(w)) \right| \\
& \quad + \left| \frac{\Omega(w)}{|w|^{n-\beta}}(\chi_{\beta}(|w-v|)-\chi_{\beta}(|w|)) \right| \\
& =: I_1 + I_2 + I_3.
\end{align*}

To estimate $I_1$, we make use of the fact that $supp (\chi_{\beta}) \subseteq \{v : |v|\leq \frac{2}{\beta}\}$, and the mean-value theorem to get that 
\begin{align*}
I_1 & = \left|\Omega(w-v)\chi_{\beta}(|w-v|)\left( \frac{1}{|w-v|^{n-\beta}}-\frac{1}{|w|^{n-\beta}} \right) \right| \\
& \lesssim \left\| \Omega \right\|_{L^\infty (S^{n-1})} \chi_{\beta}(|w-v|) \frac{|v|}{|w-v|^{n-\beta+1}} \\ 
& \lesssim \left\| \Omega \right\|_{L^\infty (S^{n-1})} \left(\frac{2}{\beta}\right)^\beta \frac{|v|}{|w|^{n+1}} \lesssim \left\| \Omega \right\|_{L^\infty (S^{n-1})} \frac{|v|}{|w|^{n+1}}.
\end{align*}

For $I_2$, note first that for $|w|\geq 2|v|$ we have $\left|\frac{w-v}{|w-v|}-\frac{w}{|w|} \right| \leq 2\frac{|v|}{|w|}$, which together with the fact that $\omega_\infty$ is non-decreasing in $\delta$ and the homogeneity of degree $0$ of $\Omega$ yields 
$$ |\Omega(w-v)-\Omega(w)| = \left| \Omega \left(\frac{w-v}{|w-v|} \right) - \Omega \left( \frac{w}{|w|} \right) \right| \leq \omega_{\infty} \left( \left| \frac{w-v}{|w-v|} - \frac{w}{|w|} \right| \right) \leq \omega_{\infty} \left( 2\frac{|v|}{|w|} \right). $$
Now, since $|w-v| \sim |w|$, utilizing as earlier the support of $\chi_{\beta}$, we get 
\begin{align*}
I_2 = \frac{\chi_{\beta}(|w-v|)}{|w|^{n-\beta}} \left| \Omega(w-v)-\Omega(w) \right| \lesssim \frac{1}{|w|^n} \omega_{\infty} \left(2\frac{|v|}{|w|} \right).
\end{align*}
Finally, we turn our attention to $I_3$. For the same, note from the definition of $\chi_\beta$ that 
\begin{align*}
|\chi_\beta(|w-v|)-\chi_\beta(|w|)|=0 & \quad \text{if} \ \ |w-v| \geq \frac{2}{\beta} \ \ \text{and} \ \ |w| \geq \frac{2}{\beta}; \\ 
& \quad \text{or} \ \ |w-v| \leq \frac{1}{\beta} \ \ \text{and} \ \ |w| \leq \frac{1}{\beta}. 
\end{align*} 
Equivalently, $ |w-v| \sim |w| \sim \frac{1}{\beta}$, or else $\chi_\beta(|x-y|)-\chi_\beta(|x|) = 0 $. 

Making use of the above observation, and  $|\chi_\beta^{\prime}(t)|\leq 2\beta$, we can apply the mean-value theorem to conclude that 
\begin{align} \label{est:chi-beta-difference}
|\chi_\beta(|w-v|) - \chi_\beta(|w|)| \lesssim \beta |v| \lesssim \frac{|v|}{|w|}, 
\end{align}  
which yields 
\begin{align*}
I_3 = \frac{\left| \Omega(w) \right| }{|w|^{n-\beta}} \left| \chi_{\beta}(|w-v|) - \chi_{\beta}(|w|) \right| \lesssim \left\| \Omega \right\|_{L^\infty (S^{n-1})} \frac{|v|}{|w|^{n+1}}. 
\end{align*}
This completes the proof of Lemma \ref{lem:Main-lemma}. 
\end{proof}

\begin{remark} \label{rem:lem:Main-lemma}
It follows easily from the proof of Lemma \ref{lem:Main-lemma} that given any $c > 1$, there exists a constant $\kappa_n > 0$ (depending on $c$) such that for any $|v|\leq r$ and $|w| \geq cr,$ the conclusion of Lemma \ref{lem:Main-lemma} holds true with $\omega_{\infty} \left( 2 \frac{|v|}{|w|} \right) $ replaced by $\omega_{\infty} \left( \kappa_n \frac{|v|}{|w|} \right) $.  
\end{remark}


\section{Boundedness of CZ type operators} \label{sec:proofs-FCZ-Lipschitz-spaces}
We start with proving Theorem \ref{thm:CZ-Lipschitz-space} in Subsection \ref{subsec:CZ-Lipschitz}. Later on, we shall establish a variant of Hardy space estimates and some weighted $L^p$-estimates of $T_\beta$ in Subsections \ref{subsec:Hardy-CZ} and \ref{subsec:Weighted-CZ}. 


\subsection{Boundedness on Lipschitz spaces: Proof of Theorem \ref{thm:CZ-Lipschitz-space}} \label{subsec:CZ-Lipschitz} 
We prove Theorem \ref{thm:CZ-Lipschitz-space} in this subsection. Note that with conditions on indices $p$ and $q$ as in Theorem \ref{thm:CZ-Lipschitz-space}, we have $\frac{(q-1) n}{q} = \frac{(1-p) n}{p} < 1$. Therefore, in this subsection, unless otherwise mentioned, we shall always be concerned with estimates with the restriction on $\beta$ to be that it is less than $1$. 

\begin{proof}[Proof of Theorem \ref{thm:CZ-Lipschitz-space}]
It suffices to show that 
$$ \|T_{1}f\|_{Lip_{n(\frac{1}{p}-1)}} \lesssim \|f\|_{Lip_{n(\frac{1}{p}-1)}} \quad \text{and} \quad  \|T_{2}f\|_{Lip_{n(\frac{1}{p}-1)}} \lesssim \frac{\beta^{\frac{(q-1) n}{q}}}{\sqrt[q]{(n(q-1)-\beta q)}} \|f\|_{BMO},$$ 
uniformly in $0 < \beta < \frac{(q-1) n}{q}$. 

But, one can work with the adjoint of operators $T_1$ and $T_2$, and making use of the duality of $H^p$-spaces, it is equivalent to prove that 
\begin{align} \label{main-est:proof-thm:CZ-Lipschitz-space}
\|T_{1}f\|_{H^{p}} \lesssim \|f\|_{H^{p}} \quad \text{and} \quad \|T_{2}f\|_{H^{1}} \lesssim \frac{\beta^{\frac{(q-1) n}{q}}}{\sqrt[q]{(n(q-1)-\beta q)}} \|f\|_{H^{p}}, 
\end{align} 
uniformly in $0 < \beta < \frac{(q-1) n}{q}$. 

\medskip \noindent \textbf{\underline{Analysis of operator $T_1$ in estimate \eqref{main-est:proof-thm:CZ-Lipschitz-space}}:} $H^p$ boundedness of $T_1$ can be proved similarly as that of Calder\'{o}n-Zygmund operators. But, for the sake of completeness, we write the proof here. We will closely follow \cite{Ding-Lu-hom-frac-Hardy-Tohoku-2000}. 

As $p>\frac{n}{n+\alpha}$, we can choose $\varepsilon > 0$ such that $\frac{1}{p}-1<\varepsilon<\frac{\alpha}{n}$. Fix $l \in (1, \infty)$, and write $a_{0} = 1 - \frac{1}{p} + \varepsilon$ and $b_{0} = 1 - \frac{1}{l} + \varepsilon$. Also, since we are working with $p > \frac{n}{n+\alpha} \geq \frac{n}{n+1}$, we have $\floor*{n(\frac{1}{p}-1)} = 0,$ and therefore we can make use of Lemma \ref{lem:Hp1-Hp2-via-molecules} with $s_1 = s_2 = 0$. 

Let $a(x)$ be a $(p,l,0)$-atom supported on $B=B(0,d)$. In view of Lemma \ref{lem:Hp1-Hp2-via-molecules}, it suffices to show that $T_{1}a$ is a $(p,l,0,\varepsilon)$-molecule, satisfying the following conditions: 
\begin{enumerate}[label=(\roman*), start=1]
\item $\displaystyle \mathcal{N}_l(T_1a) := \left\| T_{1}a \right\|_{L^l}^{\frac{a_{0}}{b_{0}}} \ \left\| |\cdot |^{nb_{0}}(T_{1}a)(\cdot) \right\|_{L^l}^{1-\frac{a_{0}}{b_{0}}} \leq C \ ( \text{independent of} \ a)$,  \label{part(i)-T_1-proof-thm:CZ-Lipschitz-space}

\item $\displaystyle \int(T_{1}a)(x) \, dx =0.$ \label{part(ii)-T_1-proof-thm:CZ-Lipschitz-space}
\end{enumerate}

\medskip \textbf{Proof of part \ref{part(i)-T_1-proof-thm:CZ-Lipschitz-space}:} Note that it follows from Theorem \ref{thm2.1:Chen-Guo-Extension-CZ-JFA-2021} that $\|T_{1}a\|_{L^l} \leq C \|a\|_{L^l}$. Now, 
\begin{align*}
\left\| |\cdot |^{nb_{0}}(T_{1}a)(\cdot) \right\|_{L^l}  \leq \left\| |\cdot |^{nb_{0}}(T_{1}a)(\cdot) {\mathbbm{1}}_{2B}(\cdot) \right\|_{L^l} + \left\| |\cdot |^{nb_{0}}(T_{1}a)(\cdot) {\mathbbm{1}}_{(2B)^c}(\cdot) \right\|_{L^l} & := A_{1}+A_{2}.
\end{align*}
Again, using Theorem \ref{thm2.1:Chen-Guo-Extension-CZ-JFA-2021}, we get
\begin{align*}
A_{1} = \left( \int_{2B}|x|^{nb_{0}l}|(T_{1}a)(x)|^{l} \, dx \right)^{\frac{1}{l}} \lesssim |B|^{b_{0}} \|T_{1}a\|_{L^l} \lesssim |B|^{b_{0}} \|a\|_{L^l}.
\end{align*}
Next, using Minkowski's inequality, Lemma \ref{lem:Main-lemma} and the vanishing condition of atom $a$, 
\begin{align*} 
A_{2} & = \left(\int_{({2B})^{c}} \left|\int_{B}\frac{\Omega(x-y)}{|x-y|^{n-\beta}}\chi_\beta (|x-y|)a(y) \, dy \right|^l |x|^{nb_{0}l} \, dx \right)^\frac{1}{l}\\
& =  \left(\int_{({2B})^{c}} \left|\int_{B}\left(\frac{\Omega(x-y)}{|x-y|^{n-\beta}}\chi_\beta (|x-y|)-\frac{\Omega(x)}{|x|^{n-\beta}}\chi_\beta (|x|)\right)a(y) \, dy \right|^l |x|^{nb_{0}l} \, dx \right)^\frac{1}{l}\\
& \leq \int_{B} |a(y)| \left(\int_{({2B})^{c}}\left|\frac{\Omega(x-y)}{|x-y|^{n-\beta}}\chi_\beta (|x-y|)-\frac{\Omega(x)}{|x|^{n-\beta}}\chi_\beta (|x|)\right|^l |x|^{nb_{0}l} \, dx \right)^\frac{1}{l} dy \\
& \lesssim \int_{B} |a(y)| \left(\int_{({2B})^{c}} \left| \frac{|y|}{|x|^{(n+1)}} \left\| \Omega \right\|_{L^\infty (S^{n-1})} + \frac{1}{|x|^{n}} \omega_{\infty} \left(2\frac{|y|}{|x|} \right) \right|^{l} |x|^{nb_{0}l} \, dx \right)^\frac{1}{l} dy \\
& \lesssim \left( \int_{B} |a(y)| |y| \, dy \right) \left(\int_{({2B})^{c}} |x|^{- (n+1-nb_{0})l} \, dx \right)^\frac{1}{l} \\ 
& \quad +  \int_{B} |a(y)| \left(\int_{({2B})^{c}} |x|^{- (n-nb_{0}) l} \omega_{\infty}^{l} \left(2\frac{|y|}{|x|}\right) dx \right)^\frac{1}{l} \, dy \\ 
& =: E_1 + E_2.
\end{align*} 
By choice of $\varepsilon$, we have $(-n-1+n b_0)l+n < 0$, therefore using H\"older's inequality, we get 
\begin{align*}
E_1 \lesssim \|a\|_{L^l} \left(\int_{B}|y|^{l^\prime} \, dy \right)^{\frac{1}{l^\prime}} d^{-n-1+nb_0+\frac{n}{l}} \lesssim \|a\|_{L^l} \ d^{1+n-\frac{n}{l}} d^{-n-1+nb_0+\frac{n}{l}} \lesssim |B|^{b_0}\|a\|_{L^l}. 
\end{align*}
On the other hand, by choice of $\varepsilon$, Dini condition \eqref{conditions:alpha-Dini-Omega-function} on $\Omega$, and the H\"older's inequality, we get 
\begin{align*}
E_2 & = \int_{B} |a(y)| \left(\int_{({2B})^{c}} |x|^{-(n-nb_0)l} \omega_{\infty}^{l} \left( 2\frac{|y|}{|x|} \right) dx \right)^\frac{1}{l} dy \\
& \lesssim \|a\|_{L^l}|B|^{\frac{1}{l^\prime}} \sum_{j=1}^{\infty} \left( \int_{2^jd\leq |x| \leq 2^{j+1}d} |x|^{-(n-nb_0)l} \, dx \right)^\frac{1}{l} \omega_{\infty} \left( 2^{-j+1} \right) \\
&  \lesssim \|a\|_{L^l}|B|^{\frac{1}{l^\prime}} \sum_{j=0}^{\infty} (2^jd)^{n\varepsilon} \omega_{\infty} \left(2^{-j} \right) \\
& \lesssim \|a\|_{L^l}d^{nb_0} \sum_{j=0}^{\infty}(2^j)^{\alpha} \omega_{\infty} \left(2^{-j} \right) \\
& \lesssim \|a\|_{L^l}|B|^{b_0} \left( \omega_{\infty}(1) + \sum_{j=1}^{\infty} \int_{2^{-j}}^{2^{-j+1}} \frac{\omega_{\infty}(t)}{t^{1+\alpha}} \, dt \right) \\
& \lesssim \|a\|_{L^l}|B|^{b_0} \left( \omega_{\infty}(1) + \int_{0}^{1} \frac{\omega_{\infty}(t)}{t^{1+\alpha}} \, dt \right) \\
& \lesssim |B|^{b_0}\|a\|_{L^l}.
\end{align*} 

Combining estimates of $E_1$ and $E_2$, we get $\displaystyle \left\| |\cdot |^{nb_{0}}(T_{1}a)(\cdot) \right\|_{L^l} \lesssim |B|^{b_0}\|a\|_{L^l}$, and therefore 
\begin{align*}
\mathcal{N}_l(T_1a) = \left\| T_{1}a \right\|_{L^l}^{\frac{a_{0}}{b_{0}}} \ \left\| |\cdot |^{nb_{0}}(T_{1}a)(\cdot) \right\|_{L^l}^{1-\frac{a_{0}}{b_{0}}} & \lesssim \|a\|_{L^l}^{\frac{a_{0}}{b_{0}}} \ |B|^{b_0-a_0} \ \|a\|_{L^l}^{1-\frac{a_{0}}{b_{0}}} = |B|^{b_0-a_0}\|a\|_{L^l} \leq 1, 
\end{align*} 
completing the proof of part \ref{part(i)-T_1-proof-thm:CZ-Lipschitz-space}.

\medskip \textbf{Proof of part \ref{part(ii)-T_1-proof-thm:CZ-Lipschitz-space}:} Following the calculations done towards the ending of page 160 in \cite{Ding-Lu-hom-frac-Hardy-Tohoku-2000}, we know that proving $\int T_{1}a(x) dx =0$ is equivalent to showing that $ (T_1a)^{\widehat{}}(\xi) \to 0$ as $\xi \to 0$. Towards that end, repeating calculations from page 160 of  \cite{Ding-Lu-hom-frac-Hardy-Tohoku-2000}, we get that 
$$ \left|\left(\frac{\Omega(\cdot)}{|\cdot|^{n-\beta}}\chi_{\beta}(|\cdot|) \right)^{\widehat{}}(\xi)\right| \leq C + \sum_{j=1}^{\infty}\left|\widehat{K_j}(\xi) \right|, $$ 
where 
$$ K_j(x) = \frac{\Omega(x)}{|x|^{n-\beta}} \chi_{\beta}(|x|) \, {\mathbbm{1}}_{[2^{j-1},2^j]}(|x|). $$
Taking advantage of the properties of function $\chi_{\beta}$, including its support condition, one can essentially repeat the proof of Lemma 5 of \cite{Ding-Lu-hom-frac-Hardy-Tohoku-2000}, modulo some obvious modifications, to conclude that there exists $0<\sigma<1$ such that 
$$ \left|\widehat{K_j}(\xi) \right| \leq C 2^{- \sigma j / 2} |\xi|^{-\sigma / 2}. $$

With the above estimate at hand, rest of the proof follows from the arguments given right after Lemma 5 in \cite{Ding-Lu-hom-frac-Hardy-Tohoku-2000}.

\medskip \noindent \textbf{\underline{Analysis of operator $T_2$ in estimate \eqref{main-est:proof-thm:CZ-Lipschitz-space}}:} 
With $p>\frac{n}{n+\alpha}$, this time we choose $\varepsilon > 0$ such that $0<\varepsilon<1-\frac{1}{p}+\frac{\alpha}{n}$. Fix $q \in (1, \infty)$ such that $\frac{1}{p}+\frac{1}{q}=2$, and write $a_{0} = \varepsilon$ and $b_{0} = 1 - \frac{1}{q} + \varepsilon$. Clearly, $n b_0 < \alpha \leq 1$, and we shall later make use of it. As observed in the analysis of operator $T_1$, we can again make use of Lemma \ref{lem:Hp1-Hp2-via-molecules} with $s_1 = s_2 = 0$. 

Let $a(x)$ be a $(p,q,0)$-atom supported on $B=B(0,d)$. In view of Lemma \ref{lem:Hp1-Hp2-via-molecules}, it suffices to show that $T_{2} a$ is a $(1,q,0,\varepsilon)$ molecule, satisfying the following conditions: 
\begin{enumerate}[label=(\roman*), start=3]
\item $\displaystyle \mathcal{N}_q(T_2a) := \left\| T_{2}a \right\|_{L^q}^{\frac{a_{0}}{b_{0}}} \ \left\| |\cdot|^{nb_{0}}(T_{2}a)(\cdot) \right\|_{L^q}^{1-\frac{a_{0}}{b_{0}}} \leq C \ ( \text{independent of} \ a)$, \label{part(iii)-T_2-proof-thm:CZ-Lipschitz-space}

\item $\displaystyle \int(T_{2}a)(x) \, dx =0.$ \label{part(iv)-T_2-proof-thm:CZ-Lipschitz-space}
\end{enumerate}

\medskip \textbf{Proof of part \ref{part(iii)-T_2-proof-thm:CZ-Lipschitz-space}:} In order to study $\mathcal{N}_q(T_2a)$, first note that $\|T_{2}a\|_{L^q}$ can be estimated by Lemma \ref{lem2.2:Yu-Jiu-Li-CZ-JFA-2021}, and we get that for any $0 < \beta < \frac{(q-1) n}{q}$, 
$$ \| T_{2}a \|_{L^q} \lesssim \frac{\beta^{\frac{(q-1) n}{q}}}{\sqrt[q]{(n(q-1)-\beta q)}} \|a\|_{L^1}.$$ 
Now, 
\begin{align*}
\left\| |\cdot |^{nb_{0}}(T_{2}a)(\cdot) \right\|_{L^q} \leq \left\| |\cdot |^{nb_{0}}(T_{2}a)(\cdot) {\mathbbm{1}}_{2B}(\cdot) \right\|_{L^q} + \left\| |\cdot |^{nb_{0}}(T_{2}a)(\cdot) {\mathbbm{1}}_{(2B)^c}(\cdot) \right\|_{L^q} =: B_1 + B_2.
\end{align*}
We can estimate $B_1$ too by Lemma \ref{lem2.2:Yu-Jiu-Li-CZ-JFA-2021}. In fact, for any $0 < \beta < \frac{(q-1) n}{q}$, 
\begin{align*}
B_{1} = \left( \int_{2B}|x|^{nb_{0}q}|(T_{2}a)(x)|^{q} \, dx  \right)^{\frac{1}{q}} \lesssim |B|^{b_{0}} \|T_{2}a\|_{L^q} \lesssim \frac{\beta^{\frac{(q-1) n}{q}}}{\sqrt[q]{(n(q-1)-\beta q)}} |B|^{b_{0}} \|a\|_{L^1}.
\end{align*}

Next, using Minkowski's inequality and the vanishing condition of atom $a$, term $B_2$ can be dominated by the sum of the following three terms: 
\begin{align*}
F_1 &= \int_{B}|a(y)| \left( \int_{(2B)^c} |\Omega(x-y)|^q\left|  \frac{1}{|x-y|^{n-\beta}}- \frac{1}{|x|^{n-\beta}} \right|^{q}(1-\chi_\beta (|x-y|))^q|x|^{nb_{0}q} \, dx \right)^{\frac{1}{q}} dy\\
F_2 &= \int_{B}|a(y)| \left( \int_{(2B)^c}\frac{1}{|x|^{(n-\beta)q}} \left| \Omega(x-y)-\Omega(x) \right|^{q}(1-\chi_\beta (|x-y|))^q|x|^{nb_{0}q} \, dx \right)^{\frac{1}{q}} dy\\
F_3 &= \int_{B}|a(y)| \left( \int_{(2B)^c}\frac{|\Omega(x)|^q}{|x|^{(n-\beta)q}} \left|  \chi_\beta (|x-y|)- \chi_\beta(|x|) \right|^{q}|x|^{nb_{0}q} \, dx \right)^{\frac{1}{q}} dy.
\end{align*}

As earlier, when $x \in (2B)^c$ and $y \in B$ we have $|x-y|\sim |x|$, and by mean-value theorem
$$\left|  \frac{1}{|x-y|^{n-\beta}}- \frac{1}{|x|^{n-\beta}} \right|\lesssim \frac{|y|}{|x|^{(n-\beta+1)}}.$$

With $nb_0 <\alpha \leq 1$ and $0<\beta< \frac{(q-1) n}{q}$, we are then lead to the following estimate of $F_1$. 
\begin{align*}
F_{1} & = \int_{B}|a(y)| \left( \int_{(2B)^c} |\Omega(x-y)|^q \left| \frac{1}{|x-y|^{n-\beta}}- \frac{1}{|x|^{n-\beta}} \right|^{q}(1-\chi_\beta (|x-y|))^q|x|^{nb_{0}q} \, dx \right)^{\frac{1}{q}} dy\\
& \lesssim \int_{B}|a(y)| \sum_{j=1}^{\infty}\left( \int_{2^jd\leq |x| \leq 2^{j+1}d} \frac{|y|^q}{|x|^{(n-\beta+1)q}}(1-\chi_\beta (|x-y|))^q|x|^{nb_{0}q} \, dx \right)^{\frac{1}{q}} dy \\
& \lesssim |B|^{b_0}\int_{B}|a(y)| \sum_{j=1}^{\infty}\frac{1}{2^{(1-nb_0)j}}\left( \int_{2^jd\leq |x| \leq 2^{j+1}d} \frac{1}{|x|^{(n-\beta)q}}(1-\chi_\beta (|x-y|))^q  \, dx \right)^{\frac{1}{q}} dy \\
& \lesssim |B|^{b_0}\int_{B}|a(y)| \sum_{j=1}^{\infty}\frac{1}{2^{(1-nb_0)j}}\left( \int_{|x-y|\geq \frac{1}{\beta}} \frac{1}{|x-y|^{(n-\beta)q}}  \, dx \right)^{\frac{1}{q}} dy \\
& \lesssim \frac{\beta^{\frac{(q-1) n}{q}}}{\sqrt[q]{(n(q-1)-\beta q)}} |B|^{b_0} \|a\|_{L^1}. 
\end{align*}

Next, similar to our analysis of $E_2$, we can perform the following estimation for $F_2$. 
\begin{align*}
F_2 & = \int_{B}|a(y)| \left( \int_{(2B)^c}\frac{1}{|x|^{(n-\beta)q}} \left| \Omega(x-y)-\Omega(x) \right|^{q} (1-\chi_\beta (|x-y|))^q|x|^{nb_{0}q} \, dx \right)^{\frac{1}{q}} dy \\
& \lesssim |B|^{b_0} \int_{B} |a(y)| \sum_{j=1}^{\infty}(2^j )^{n b_0}\left( \int_{2^jd\leq |x| \leq 2^{j+1}d}\frac{ \left| \Omega(x-y)-\Omega(x) \right|^{q}}{|x|^{(n-\beta)q}}(1-\chi_\beta (|x-y|))^q   \, dx \right)^\frac{1}{q} dy \\
& \lesssim |B|^{b_0} \int_{B} |a(y)| \sum_{j=1}^{\infty}(2^j )^{n b_0} \omega_{\infty} \left(2^{-j} \right) \left( \int_{|x-y| \geq \frac{1}{\beta}}\frac{1}{|x-y|^{(n-\beta)q}}  \, dx \right)^\frac{1}{q} dy \\
& \lesssim \frac{\beta^{\frac{(q-1) n}{q}}}{\sqrt[q]{(n(q-1)-\beta q)}} |B|^{b_0}\|a\|_{L^1} \sum_{j=1}^{\infty}(2^j)^{\alpha} \omega_{\infty} \left(2^{-j} \right) \\
& \lesssim \frac{\beta^{\frac{(q-1) n}{q}}}{\sqrt[q]{(n(q-1)-\beta q)}} |B|^{b_0}\|a\|_{L^1}.
\end{align*}

In analysing $F_3$, we make use of estimate \eqref{est:chi-beta-difference}, which leads us to 
\begin{align*}
F_3 & = \int_{B}|a(y)| \left( \int_{(2B)^c}\frac{|\Omega(x)|^q}{|x|^{(n-\beta)q}} \left|  \chi_\beta (|x-y|)- \chi_\beta(|x|) \right|^{q}|x|^{nb_{0}q} \, dx \right)^{\frac{1}{q}} dy \\ 
& \lesssim |B|^{b_0} \int_{B}|a(y)|\sum_{j=1}^{\infty}(2^j)^{n b_0} \left( \int_{2^j d \leq |x| < 2^{j+1}d} \frac{1}{|x|^{(n-\beta)q}} \frac{|y|^q}{|x|^q} \, \chi_{\{|x| \sim \frac{1}{\beta}\}} (x) \, dx \right)^{\frac{1}{q}} dy \\
& \lesssim |B|^{b_0} \int_{B}|a(y)|\sum_{j=1}^{\infty}\frac{1}{2^{(1-n b_0)j}} \left( \int_{2^j d \leq |x| < 2^{j+1}d} \frac{1}{|x|^{(n-\beta)q}} \, \chi_{\{|x| \sim \frac{1}{\beta}\}} (x) \, dx \right)^{\frac{1}{q}} dy\\
& \lesssim \frac{\beta^{\frac{(q-1) n}{q}}}{\sqrt[q]{(n(q-1)-\beta q)}} |B|^{b_0}\|a\|_{L^1}. 
\end{align*}
 
Combining all the above estimates and using the relation $\frac{1}{p}+\frac{1}{q}=2$, we have
\begin{align*}
\mathcal{N}_q(T_2a) & = \|T_{2}a\|_{L^q}^{\frac{a_{0}}{b_{0}}} \| |\cdot |^{nb_{0}}(T_{2}a)(\cdot)\|_{L^q}^{1-\frac{a_{0}}{b_{0}}} \\ 
& \lesssim \left(\frac{\beta^{\frac{(q-1) n}{q}}}{\sqrt[q]{(n(q-1)-\beta q)}}\right)^{\frac{a_{0}}{b_{0}}} \|a\|_{L^1}^{\frac{a_{0}}{b_{0}}} \left(\frac{\beta^{\frac{(q-1) n}{q}}}{\sqrt[q]{(n(q-1)-\beta q)}}\right)^{1-{\frac{a_{0}}{b_{0}}}} |B|^{b_0-a_0}\|a\|_{L^1}^{1-\frac{a_{0}}{b_{0}}} \\
& \lesssim \frac{\beta^{\frac{(q-1) n}{q}}}{\sqrt[q]{(n(q-1)-\beta q)}}|B|^{b_0-a_0}\|a\|_{L^1} \\
& \lesssim \frac{\beta^{\frac{(q-1) n}{q}}}{\sqrt[q]{(n(q-1)-\beta q)}} |B|^{1-\frac{1}{q}}|B|^{1-\frac{1}{p}} \\
& \leq \frac{\beta^{\frac{(q-1) n}{q}}}{\sqrt[q]{(n(q-1)-\beta q)}}.
\end{align*}

\medskip \textbf{Proof of part \ref{part(iv)-T_2-proof-thm:CZ-Lipschitz-space}:} The proof of $\displaystyle \int T_{2}a(x) \, dx =0$ can be done similar to the one for $\displaystyle \int T_{1}a(x) \, dx = 0$ as in part \ref{part(ii)-T_1-proof-thm:CZ-Lipschitz-space}, and the same would be accomplished if we could show that 
\begin{align} \label{est:proof-main-thorem-technical-est-T2} 
& \left| \int_{\mathbb{R}^n} \frac{\Omega(x)}{|x|^{n-\beta}}(1-\chi_{\beta}(|x|))e^{-2\pi \iota \xi \cdot x} \, dx \right| \lesssim_{\beta, \eta} (1 + |\xi|^{-\eta}),
\end{align}
for all $0 < \beta < \eta < 1$ and for every $\xi \in \mathbb{R}^n$. 

Note that, we can write 
\begin{align*}
& \left| \int_{\mathbb{R}^n} \frac{\Omega(x)}{|x|^{n-\beta}} (1-\chi_{\beta}(|x|)) e^{-2\pi \iota \xi \cdot x} \, dx \right| \\ 
& \leq \left| \int_{\frac{1}{\beta} < |x| \leq \frac{2}{\beta}} \frac{\Omega(x)}{|x|^{n-\beta}} (1-\chi_{\beta}(|x|)) e^{-2\pi \iota \xi \cdot x} \, dx \right| + \left| \int_{|x| > \frac{2}{\beta}} \frac{\Omega(x)}{|x|^{n-\beta}} e^{-2\pi \iota \xi \cdot x} \, dx \right| = : I + II. 
\end{align*}
Estimating $I$ is straightforward. In fact, uniformly in $\beta \in (0, 1)$, 
\begin{align*}
I & \leq \int_{\frac{1}{\beta} < |x| \leq \frac{2}{\beta}} \frac{\left| \Omega(x) \right|}{|x|^{n-\beta}} \, dx =  \int_{S^{n-1}} \left| \Omega(x') \right| \, d \sigma (x') \int_{\frac{1}{\beta}}^{\frac{2}{\beta}} r^{\beta-1} \, dr \lesssim \frac{2^\beta - 1}{\beta} \beta^{-\beta} \lesssim 1. 
\end{align*}
On the other hand, 
\begin{align*}
II & = \left|\sum_{k=1}^{\infty}\int_{\frac{1}{\beta}2^k\leq |x| < \frac{1}{\beta}2^{k+1}} \frac{\Omega(x)}{|x|^{n-\beta}} e^{-2\pi \iota \xi \cdot x} \, dx \right| \\
& \leq \sum_{k=1}^{\infty} \left| \int_{S^{n-1}}\Omega(x^{\prime}) \left( \int_{\frac{1}{\beta}2^k}^{\frac{1}{\beta}2^{k+1}} e^{-2\pi \iota r \xi \cdot x^{\prime}} r^{\beta - 1} dr \right) d\sigma(x^{\prime}) \right| \\ 
& \lesssim_{\eta} \sum_{k=1}^{\infty} \left( \frac{2^k}{\beta}\right)^{\beta} \left| \frac{2^k}{\beta} \xi\right|^{-\eta}, 
\end{align*}
for any $0 < \eta < 1$. Here, the last estimate follows from inequality (2.9) of \cite{Chen-Guo-Extension-CZ-JFA-2021}. 

Then, with an $\eta \in (\beta, 1)$ fixed, we would get 
$$ II \lesssim_{\eta} |\xi|^{-\eta} \sum_{k=1}^{\infty} 2^{k(\beta - \eta)} \lesssim_{\eta} |\xi|^{-\eta}.$$ 

Combining estimates for $I$ and $II$ together imply the claimed inequality \eqref{est:proof-main-thorem-technical-est-T2}. With that the proof of part \ref{part(iv)-T_2-proof-thm:CZ-Lipschitz-space} can be completed following the idea mentioned at the end of the proof of part \ref{part(ii)-T_1-proof-thm:CZ-Lipschitz-space}, and this completes the proof of Theorem \ref{thm:CZ-Lipschitz-space}.
\end{proof} 


\subsection{Boundedness on Hardy spaces} \label{subsec:Hardy-CZ}
We have the following variant of Theorem \ref{thm:CZ-Lipschitz-space} concerning estimates of operators $T_\beta$ on Hardy spaces $H^p$, with error term taken in other Hardy spaces.
\begin{theorem} \label{thm:CZ-Hardy-estimate}
Let $\Omega$ satisfy conditions \eqref{conditions:main-Omega-function} and \eqref{conditions:alpha-Dini-Omega-function} for some $0< \alpha \leq 1$. Given $\frac{n}{n+\alpha}<m<p \leq 1$, let $q>1$ be such that $\frac{1}{m} - \frac{1}{p} = 1 - \frac{1}{q}$. Then there exists a constant $ C>0$ such that 
\begin{align*}
\left\|T_{\beta} f\right\|_{H^p} \leq C \left( \|f\|_{H^p}+\frac{\beta^{\frac{(q-1) n}{q}}}{\sqrt[q]{(n(q-1)-\beta q)}}\|f\|_{H^{m}} \right).
\end{align*}
holds true for every $0 < \beta < \frac{(q-1) n}{q}$ and for any $f \in H^p\cap H^{m} $. 
\end{theorem}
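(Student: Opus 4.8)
The plan is to mimic the proof of Theorem \ref{thm:CZ-Lipschitz-space}, but now working directly with the atomic/molecular characterisation of Hardy spaces rather than passing to the dual. As before, decompose $T_{\beta} = T_1 + T_2$ using the cut-off \eqref{def:operator-decomposition}, and it suffices to establish
\begin{align*}
\|T_1 f\|_{H^p} \lesssim \|f\|_{H^p} \quad \text{and} \quad \|T_2 f\|_{H^p} \lesssim \frac{\beta^{\frac{(q-1)n}{q}}}{\sqrt[q]{n(q-1)-\beta q}} \, \|f\|_{H^m},
\end{align*}
uniformly in $0 < \beta < \frac{(q-1)n}{q}$. For the first estimate, since $p > \frac{n}{n+\alpha}$ we have $\floor*{n(\frac{1}{p}-1)} = 0$, and one applies Lemma \ref{lem:Hp1-Hp2-via-molecules} with $p_1 = p_2 = p$: given a $(p,l,0)$-atom $a$ supported on $B = B(0,d)$, show $T_1 a$ is a $(p,l,0,\varepsilon)$-molecule with $\mathcal{N}_l(T_1 a) \lesssim 1$. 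This is essentially verbatim the ``Analysis of operator $T_1$'' portion of the proof of Theorem \ref{thm:CZ-Lipschitz-space} — the $L^l$-bound comes from Theorem \ref{thm2.1:Chen-Guo-Extension-CZ-JFA-2021}, the weighted $L^l$-bound splits over $2B$ and $(2B)^c$, the latter handled via Minkowski's inequality, the cancellation of $a$, and Lemma \ref{lem:Main-lemma} together with the Dini condition \eqref{conditions:alpha-Dini-Omega-function}; the vanishing moment $\int T_1 a = 0$ follows from the Fourier-decay argument à la Lemma 5 of \cite{Ding-Lu-hom-frac-Hardy-Tohoku-2000}. Here one should choose $\varepsilon$ with $\frac{1}{p} - 1 < \varepsilon < \frac{\alpha}{n}$, exactly as before.

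For the $T_2$ estimate, the key new point is the index bookkeeping: one wants $T_2$ to map a $(m,l,0)$-atom to a $(p,l,0,\varepsilon)$-molecule with molecular norm controlled by $\frac{\beta^{(q-1)n/q}}{\sqrt[q]{n(q-1)-\beta q}}$. Apply Lemma \ref{lem:Hp1-Hp2-via-molecules} with $p_1 = m$, $p_2 = p$ (note $m < p \le 1$, as required). Take $l = q$, choose $\varepsilon > 0$ with $0 < \varepsilon < 1 - \frac{1}{m} + \frac{\alpha}{n}$ (possible since $m > \frac{n}{n+\alpha}$), and set $a_0 = 1 - \frac1p + \varepsilon$, $b_0 = 1 - \frac1q + \varepsilon$; one must double-check that $\varepsilon > \max\{0, \frac1p - 1\}$ so these are legitimate molecular parameters, and that $n b_0 < \alpha$ (or $\le \alpha$), which is what makes the tail sums $\sum_j (2^j)^{n b_0} \omega_\infty(2^{-j})$ and $\sum_j 2^{(n b_0 - 1)j}$ converge — the latter needs $n b_0 < 1$, which follows once $n b_0 < \alpha \le 1$ and, in the $\alpha = 1$ case, one checks the inequality is strict. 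Given a $(m,q,0)$-atom $a$ on $B = B(0,d)$, Lemma \ref{lem2.2:Yu-Jiu-Li-CZ-JFA-2021} gives $\|T_2 a\|_{L^q} \lesssim \frac{\beta^{(q-1)n/q}}{\sqrt[q]{n(q-1)-\beta q}} \|a\|_{L^1}$, the $2B$-part of the weighted norm is handled the same way, and the $(2B)^c$-part splits into the three pieces $F_1, F_2, F_3$ exactly as in the proof of Theorem \ref{thm:CZ-Lipschitz-space} (difference of the radial factor, difference of $\Omega$, difference of $\chi_\beta$), each controlled by $\frac{\beta^{(q-1)n/q}}{\sqrt[q]{n(q-1)-\beta q}} |B|^{b_0} \|a\|_{L^1}$ using the mean value theorem, \eqref{est:chi-beta-difference}, and the Dini condition. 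Assembling $\mathcal{N}_q(T_2 a) = \|T_2 a\|_{L^q}^{a_0/b_0} \big\| |\cdot|^{n b_0}(T_2 a) \big\|_{L^q}^{1 - a_0/b_0}$ and invoking $|B|^{b_0 - a_0} = |B|^{\frac1p - \frac1q} = |B|^{(1 - \frac1m) + (1 - \frac1q) - (1 - \frac1m) \cdots}$ — more precisely using the relation $\frac1m - \frac1p = 1 - \frac1q$, which gives $b_0 - a_0 = \frac1p - \frac1q = \frac1m - 1 = -(1 - \frac1m)$, so $|B|^{b_0 - a_0}\|a\|_{L^1} \le |B|^{b_0 - a_0}|B|^{1 - \frac1m} = 1$ — yields the desired molecular bound. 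Finally $\int T_2 a = 0$ follows from the Fourier-side estimate \eqref{est:proof-main-thorem-technical-est-T2} established in part \ref{part(iv)-T_2-proof-thm:CZ-Lipschitz-space}, which is insensitive to the choice of atom space and applies unchanged.

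I expect the main obstacle to be purely the index calculus: verifying that the chosen $\varepsilon$, $a_0$, $b_0$ simultaneously satisfy all molecular admissibility constraints of Definition \ref{def:molecules} (namely $\varepsilon > \max\{\frac{s}{n}, \frac1p - 1\}$ with $s = 0$, i.e. $\varepsilon > \frac1p - 1$ — note this must hold for $p_2 = p$, not $m$) while also keeping $n b_0$ below the threshold needed for the geometric tail sums to converge, and then tracking how the constraint $\frac1m - \frac1p = 1 - \frac1q$ makes the power of $|B|$ come out to exactly $b_0 - a_0 = -(1 - \frac1m)$ so that the atom normalisation $\|a\|_{L^1} \le |B|^{1 - \frac1m}$ closes the estimate to $\lesssim 1$. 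The analytic content — Minkowski, the mean value theorem, Lemma \ref{lem:Main-lemma}, the Dini summation, and the van der Corput/Fourier-decay argument — is all already present in the proof of Theorem \ref{thm:CZ-Lipschitz-space} and transfers with only cosmetic changes; once the bookkeeping is pinned down, the rest is routine.
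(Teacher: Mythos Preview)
Your proposal is correct and matches the paper's proof: the paper defers the $T_1$ bound to the analysis already carried out in Theorem \ref{thm:CZ-Lipschitz-space}, and for $T_2$ shows that an $(m,q,0)$-atom is sent to a $(p,q,0,\varepsilon)$-molecule with the claimed molecular norm via the same $F_1,F_2,F_3$ splitting, closing the estimate with $\frac{1}{m}-\frac{1}{p}=1-\frac{1}{q}$ exactly as you outline. One correction to the bookkeeping you rightly flag as the crux: the admissible range is $\frac{1}{p}-1 < \varepsilon < \frac{1}{q}-1+\frac{\alpha}{n}$ (the upper bound is precisely what $nb_0<\alpha$ forces, and this interval is nonempty exactly because $m>\frac{n}{n+\alpha}$), whereas your stated upper bound $1-\frac{1}{m}+\frac{\alpha}{n}$ is strictly smaller when $p<1$ and can drop below $\frac{1}{p}-1$, leaving no valid $\varepsilon$.
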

\begin{proof}
As earlier, it suffices to show that 
$$\|T_{1}f\|_{H^p}\leq C \|f\|_{H^p} \quad \text{and} \quad \|T_{2}f\|_{H^p}\leq C \frac{\beta^{\frac{(q-1) n}{q}}}{\sqrt[q]{(n(q-1)-\beta q)}} \|f\|_{H^{m}}.$$
Estimates for $T_1f$ were already established in the proof of Theorem \ref{thm:CZ-Lipschitz-space}, so we are only left to study $T_2 f$. As $m>\frac{n}{n+\alpha}$, we can choose $\varepsilon$ such that $\frac{1}{p}-1<\varepsilon<\frac{1}{q}-1+\frac{\alpha}{n}$. Write $a_{0}=1-\frac{1}{p}+\varepsilon$ and  $b_{0}=1-\frac{1}{q}+\varepsilon$. Let $a(x)$ be an $(m,q,0)$-atom supported on $B=B(0,d)$. It suffices to show that $T_{2}a$ is $(p,q,0,\varepsilon)$-molecule, satisfying the following conditions: 
\begin{enumerate}[label=(\roman*), start=5]
\item $\displaystyle \mathcal{N}_q(T_2a) = \left\|T_{2}a \right\|_{L^q}^{\frac{a_{0}}{b_{0}}} \left\| |\cdot |^{nb_{0}}(T_{2}a)(\cdot) \right\|_{L^q}^{1-\frac{a_{0}}{b_{0}}} \leq C \ ( \text{independent of}\  a).$ \label{part(v)-T_2-proof-thm:CZ-Hardy-estimate} 

\item $\displaystyle \int (T_{2}a)(x)  \, dx =0.$ \label{part(vi)-T_2-proof-thm:CZ-Hardy-estimate}
\end{enumerate}

Both \ref{part(v)-T_2-proof-thm:CZ-Hardy-estimate}  and \ref{part(vi)-T_2-proof-thm:CZ-Hardy-estimate} can be proved in an exactly similar manner to the ideas used in the proof of Theorem \ref{thm:CZ-Lipschitz-space}.
Here again we make use of $nb_0-1\leq nb_0-\alpha<0$ as $\varepsilon < \frac{1}{q}-1+\frac{\alpha}{n}$, so that 
\begin{align*}
\mathcal{N}_q(T_2a) = \left\| T_{2}a \right\|_{L^q}^{\frac{a_{0}}{b_{0}}} \left\| |\cdot |^{nb_{0}}(T_{2}a)(\cdot) \right\|_{L^q}^{1-\frac{a_{0}}{b_{0}}} & \lesssim \frac{\beta^{\frac{(q-1) n}{q}}}{\sqrt[q]{(n(q-1)-\beta q)}}|B|^{b_0-a_0}\|a\|_{L^1} \\
& \lesssim \frac{\beta^{\frac{(q-1) n}{q}}}{\sqrt[q]{(n(q-1)-\beta q)}} |B|^{\frac{1}{p}-\frac{1}{q}+1-\frac{1}{m}} \\
& \lesssim \frac{\beta^{\frac{(q-1) n}{q}}}{\sqrt[q]{(n(q-1)-\beta q)}},
\end{align*}
where we have used the fact that $0< \beta < \frac{(q-1) n}{q}$ and $\frac{1}{m}-\frac{1}{p}=1-\frac{1}{q}$. 

This completes the proof of Theorem \ref{thm:CZ-Hardy-estimate}.
\end{proof}

\subsection{Weighted boundedness} \label{subsec:Weighted-CZ}

This subsection is devoted to an extension of Theorem \ref{thm:Yu-Jiu-Li-CZ-JFA-2021} to weighted $L^p$-spaces. For results in the case of $\beta = 0$, we refer to \cite{Lu-Ding-Yan-book-singular-int-2007}. Following is our main result in this context. 
\begin{theorem} \label{thm:CZ-weighted-Lp-estimate}
Given $1<p<\infty$ and $\omega \in A_p(\mathbb{R}^n)$, there exists $q \in (1, p)$ such that for every $\Omega$ satisfying conditions \eqref{conditions:main-Omega-function} and \eqref{conditions:Dini-Omega-function}, there exists a constant $C > 0$ such that
\begin{align*}
\left\|T_{\beta} f\right\|_{L^{p}(\omega)} \leq C \left(\|f\|_{L^{p}(\omega)} + \frac{\beta^{n(1-\frac{1}{p})}}{n(1 - \frac{q}{p}) - \beta} \ \|f\|_{L^{1} ((M\omega)^{\frac{1}{p}})} \right)
\end{align*}
holds true for every $ 0 < \beta < \frac{(p-q) n}{p}$ and for any $f\in L^{p}(\omega) \cap L^1 ((M\omega)^{\frac{1}{p}})$.
\end{theorem}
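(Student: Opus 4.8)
The plan is to use the decomposition $T_\beta = T_1 + T_2$ from \eqref{def:operator-decomposition} and to establish separately, uniformly in the relevant range of $\beta$, the two estimates
$$ \|T_1 f\|_{L^p(\omega)} \lesssim \|f\|_{L^p(\omega)} \qquad \text{and} \qquad \|T_2 f\|_{L^p(\omega)} \lesssim \frac{\beta^{n(1-\frac1p)}}{n(1-\frac qp)-\beta}\, \|f\|_{L^1((M\omega)^{1/p})}, $$
where the exponent $q \in (1,p)$ is fixed once and for all at the outset through the self-improvement property of Muckenhoupt classes: since $\omega \in A_p$, there is $q \in (1,p)$ with $\omega \in A_{p/q}$ (equivalently $\omega \in A_{p-\varepsilon}$ for some small $\varepsilon > 0$), and this is the $q$ in the statement. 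By a routine density/truncation argument it suffices to prove both inequalities for $f$ in a dense subclass, say $f \in L^\infty_c(\mathbb{R}^n)$; for such $f$ the operator $T_1$ acts as an absolutely convergent convolution, since its kernel $K_1(y) = \frac{\Omega(y)}{|y|^{n-\beta}}\chi_\beta(|y|)$ lies in $L^1(\mathbb{R}^n)$ and has compact support.

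For $T_1$ I would run the Fefferman--Stein sharp maximal function machinery. The point is that $T_1$ is, uniformly in $\beta \in (0,1)$, a Calder\'on--Zygmund operator: it is $L^q$-bounded by Theorem \ref{thm2.1:Chen-Guo-Extension-CZ-JFA-2021}; its kernel satisfies $|K_1(y)| \lesssim |y|^{-n}$ with a $\beta$-independent constant, since $|y|^\beta \leq (2/\beta)^\beta \leq C$ on $\operatorname{supp}\chi_\beta$ for $\beta < 1$; and, crucially, Lemma \ref{lem:Main-lemma} together with the Dini condition \eqref{conditions:Dini-Omega-function} shows that $K_1$ obeys an $L^1$--H\"ormander regularity estimate with a constant not depending on $\beta$. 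Splitting $f$ into its restrictions to $2Q$ and to $(2Q)^c$ and subtracting the constant $T_1(f\mathbbm{1}_{(2Q)^c})(x)$ for a cube $Q \ni x$ then yields, in the standard way, the pointwise bound $M^{\#}(T_1 f)(x) \lesssim \big( M(|f|^q)(x)\big)^{1/q}$, uniformly in $\beta$. Since $\omega \in A_\infty$ and, after a routine truncation of $\omega$ to verify the a priori hypothesis $M(T_1 f) \in L^{p_0}(\omega)$, Lemma \ref{lem:weighted-Fefferman-Stein} applies, giving
$$ \|T_1 f\|_{L^p(\omega)} \leq \|M(T_1 f)\|_{L^p(\omega)} \lesssim \|M^{\#}(T_1 f)\|_{L^p(\omega)} \lesssim \left\| \big( M(|f|^q)\big)^{1/q} \right\|_{L^p(\omega)} = \|M(|f|^q)\|_{L^{p/q}(\omega)}^{1/q} \lesssim \|f\|_{L^p(\omega)}, $$
the last inequality because $M$ is bounded on $L^{p/q}(\omega)$ (here $p/q > 1$ and $\omega \in A_{p/q}$). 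Alternatively one may simply quote the classical weighted boundedness of Calder\'on--Zygmund operators with Dini-continuous kernels.

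For $T_2$ the argument is elementary. As in the proof of Lemma \ref{lem2.2:Yu-Jiu-Li-CZ-JFA-2021}, $|T_2 f(x)| \leq \|\Omega\|_{L^\infty(S^{n-1})} \int_{|x-y| \geq 1/\beta} |x-y|^{\beta-n} |f(y)|\, dy \lesssim \sum_{k \geq 0} (2^k/\beta)^{\beta-n} \int_{B(x,\, 2^{k+1}/\beta)} |f(y)|\, dy$; taking $L^p(\omega)$-norms, applying Minkowski's integral inequality and the trivial bound $\int_{B(y,R)} \omega \leq c_n R^n M\omega(y)$, I obtain
$$ \|T_2 f\|_{L^p(\omega)} \lesssim \sum_{k \geq 0} (2^k/\beta)^{\beta-n} (2^{k+1}/\beta)^{n/p}\, \|f\|_{L^1((M\omega)^{1/p})} \lesssim \beta^{\,n-\beta-\frac np} \left( \sum_{k \geq 0} 2^{-k \left( n(1-\frac1p) - \beta \right)} \right) \|f\|_{L^1((M\omega)^{1/p})}. $$
Under the hypothesis $0 < \beta < n(1-\frac qp)$ we have $n(1-\frac1p) - \beta > n\frac{q-1}{p} > 0$, so the geometric series converges; estimating its sum via the elementary monotonicity of $t \mapsto (1-2^{-t})/t$ by a constant multiple of $\big(n(1-\frac1p)-\beta\big)^{-1} \leq \big(n(1-\frac qp)-\beta\big)^{-1}$, and using $\beta^{-\beta} \leq e^{1/e}$ to absorb $\beta^{\,n-\beta-n/p} \lesssim \beta^{\,n(1-\frac1p)}$, produces exactly the claimed bound for $T_2$.

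The one step demanding genuine care is the uniformity in $\beta$ throughout the $T_1$ analysis: the $L^q$-bound of Theorem \ref{thm2.1:Chen-Guo-Extension-CZ-JFA-2021}, the size estimate for $K_1$, and above all the H\"ormander-type smoothness of $K_1$ must all hold with constants independent of $\beta$, and it is precisely Lemma \ref{lem:Main-lemma} (fed by the Dini condition \eqref{conditions:Dini-Omega-function}) that supplies the last of these. The remaining points — checking the a priori integrability needed to invoke Lemma \ref{lem:weighted-Fefferman-Stein}, and the density extension from $L^\infty_c$ to $L^p(\omega) \cap L^1((M\omega)^{1/p})$ — are routine, as is the entire $T_2$ estimate.
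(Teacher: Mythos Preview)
Your proposal is correct and follows the same overall strategy as the paper: decompose $T_\beta = T_1 + T_2$, handle $T_1$ via the pointwise sharp-maximal estimate $M^{\#}(T_1 f)(x)\lesssim (M(|f|^s)(x))^{1/s}$ combined with the weighted Fefferman--Stein inequality (this is exactly the paper's Lemma~\ref{proof:thm:CZ-weighted-Lp-estimate-part1-Lemma1}), and handle $T_2$ by a direct dyadic-shell computation after Minkowski's inequality.

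The one noteworthy difference is in the $T_2$ step. The paper invokes the $A_q$ growth property $\omega(rB)\lesssim r^{nq}\,\omega(B)$, and it is precisely this doubling exponent $q$ that produces the denominator $n(1-\tfrac{q}{p})-\beta$ in the final bound; accordingly the paper's $q$ is chosen so that $\omega\in A_q$. You instead use only the trivial inequality $\omega(B(y,R))\leq c_n R^{n}M\omega(y)$ on each shell separately. This is a mild simplification: your $T_2$ argument makes no use of any $A_q$ structure on $\omega$ and in fact delivers the slightly sharper constant $\big(n(1-\tfrac1p)-\beta\big)^{-1}$, valid on the larger range $0<\beta<n(1-\tfrac1p)$, which you then weaken to match the stated bound. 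Correspondingly, the self-improvement exponent $q$ in your proof is needed only for the $T_1$ piece (to have $\omega\in A_{p/q}$ and hence $M$ bounded on $L^{p/q}(\omega)$), whereas in the paper it is the $T_2$ piece that dictates the choice of $q$.
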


Theorem \ref{thm:CZ-weighted-Lp-estimate} would follow if we can show that 
\begin{align} \label{proof:thm:CZ-weighted-Lp-estimate-part1} 
\|T_{1}f\|_{L^p(\omega)} \leq C \|f\|_{L^p(\omega)}, \end{align} 
and that there exists some $q \in (1, p)$ such that 
\begin{align} \label{proof:thm:CZ-weighted-Lp-estimate-part2} 
\| T_{2} f \|_{L^p(\omega)} \leq C \frac{\beta^{n(1-\frac{1}{p})}}{n(1 - \frac{q}{p}) - \beta} \ \|f\|_{L^{1}((M\omega)^{\frac{1}{p}})}.  
\end{align} 

The proof of the weighted $L^p$-estimate \eqref{proof:thm:CZ-weighted-Lp-estimate-part1} of $T_1$ can be done following the classical proof of weighted boundedness of the Calder\'{o}n-Zygmund singular integral and incorporating the support of the kernel of $T_1$. In doing so, we can follow the proof of Theorem 2.1.6 of \cite{Lu-Ding-Yan-book-singular-int-2007} to show that $M (T_1 f) \in L^p(\omega)$. Then, as shown in the proof of Theorem 2.1.6 of \cite{Lu-Ding-Yan-book-singular-int-2007}, inequality \eqref{proof:thm:CZ-weighted-Lp-estimate-part1} will follow from Lemma \ref{lem:weighted-Fefferman-Stein} and the following lemma. 

\begin{lemma} \label{proof:thm:CZ-weighted-Lp-estimate-part1-Lemma1} 
For $1<s<p $ we have $M^{\#}(T_1f)(x) \lesssim_s (M(|f|^s)(x))^{\frac{1}{s}}. $ 
\end{lemma}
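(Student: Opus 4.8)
\textbf{Proof proposal for Lemma \ref{proof:thm:CZ-weighted-Lp-estimate-part1-Lemma1}.}

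The plan is to follow the classical argument estimating the Fefferman--Stein sharp maximal function of a singular integral, while carefully incorporating the fact that the kernel of $T_1$ is supported in $\{|y| \le 2/\beta\}$ and satisfies the H\"older-type bound provided by Lemma \ref{lem:Main-lemma}. Fix $x \in \mathbb{R}^n$ and a cube $Q$ centered at $x$ with side-length $\ell$; by \eqref{ineq:sharp-maximal-function-pointwise} it suffices to bound $\frac{1}{|Q|}\int_Q |T_1 f(z) - c|\,dz$ for a suitable constant $c$. Split $f = f_0 + f_\infty$ where $f_0 = f\,{\mathbbm 1}_{2\sqrt n\,Q}$ and $f_\infty = f - f_0$, and take $c = T_1 f_\infty(x)$ (which is finite because of the kernel support and the tail decay, as we check below).

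For the local part, I would use that $T_1$ is bounded on $L^s$ (Theorem \ref{thm2.1:Chen-Guo-Extension-CZ-JFA-2021}, with constant independent of $\beta$), followed by H\"older's inequality:
\begin{align*}
\frac{1}{|Q|}\int_Q |T_1 f_0(z)|\,dz \le \left(\frac{1}{|Q|}\int_Q |T_1 f_0(z)|^s\,dz\right)^{1/s} \lesssim_s \left(\frac{1}{|Q|}\int_{2\sqrt n\,Q} |f(z)|^s\,dz\right)^{1/s} \lesssim_s \big(M(|f|^s)(x)\big)^{1/s}.
\end{align*}
For the far part, for $z \in Q$ I would write $T_1 f_\infty(z) - T_1 f_\infty(x) = \int_{(2\sqrt n\,Q)^c}\big(K_1(z-y) - K_1(x-y)\big) f(y)\,dy$ with $K_1(w) = \frac{\Omega(w)}{|w|^{n-\beta}}\chi_\beta(|w|)$, and apply Lemma \ref{lem:Main-lemma} (in the form of Remark \ref{rem:lem:Main-lemma}, with $v = z-x$, $w = x-y$, $r \sim \ell$) to get
\begin{align*}
|K_1(z-y) - K_1(x-y)| \lesssim \frac{\ell}{|x-y|^{n+1}} + \frac{1}{|x-y|^n}\,\omega_\infty\!\left(\frac{\kappa_n \ell}{|x-y|}\right).
\end{align*}
Decomposing $(2\sqrt n\,Q)^c$ into dyadic annuli $2^k\ell \le |x-y| < 2^{k+1}\ell$, $k \ge 1$, the first term contributes $\sum_k 2^{-k} \cdot \frac{1}{|2^k Q|}\int_{2^{k+1}\sqrt n Q}|f| \lesssim M f(x) \le \big(M(|f|^s)(x)\big)^{1/s}$, and the second term contributes $\sum_k \omega_\infty(\kappa_n 2^{-k}) M f(x) \lesssim M f(x)$, where the sum over $k$ converges by the Dini condition \eqref{conditions:Dini-Omega-function} (comparing $\sum_k \omega_\infty(2^{-k})$ with $\int_0^1 \omega_\infty(\delta)/\delta\,d\delta$). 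Averaging over $z \in Q$ then gives $\frac{1}{|Q|}\int_Q |T_1 f_\infty(z) - c|\,dz \lesssim \big(M(|f|^s)(x)\big)^{1/s}$, and combining with the local estimate and taking the supremum over cubes $Q$ centered at $x$ finishes the proof.

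The main point requiring care — rather than a genuine obstacle — is keeping all implicit constants independent of $\beta$. This is where the kernel support $\{|w| \le 2/\beta\}$ actually helps rather than hurts: the bound in Lemma \ref{lem:Main-lemma} is already uniform in $\beta \in (0,n)$ (the factors $(2/\beta)^\beta$ appearing there are bounded for $\beta$ in a bounded range, and in any case we only care about small $\beta$), and Theorem \ref{thm2.1:Chen-Guo-Extension-CZ-JFA-2021} gives $L^s$-boundedness of $T_1$ with a $\beta$-independent constant. One should also note that only the weaker Dini condition \eqref{conditions:Dini-Omega-function} is needed here, not \eqref{conditions:alpha-Dini-Omega-function}, since the dyadic sum $\sum_k \omega_\infty(\kappa_n 2^{-k})$ converges already under \eqref{conditions:Dini-Omega-function}. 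Finally, one verifies that $T_1 f_\infty(x)$ is well-defined: the integrand is supported where $|x - y| \le 2/\beta$, and on $2^k\ell \le |x-y| \le 2/\beta$ the kernel is bounded by $|x-y|^{\beta-n} \le |x-y|^{-n+\beta}$, so $|T_1 f_\infty(x)| \lesssim_\beta M f(x) < \infty$ for a.e.\ $x$ (the $\beta$-dependence here is harmless since $c$ is merely an auxiliary constant).
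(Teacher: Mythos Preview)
Your proposal is correct and follows essentially the same route as the paper's proof: split $f$ into a local part on a dilate of $Q$ and a far part, handle the local part via H\"older plus the uniform $L^s$-boundedness of $T_1$ (Theorem \ref{thm2.1:Chen-Guo-Extension-CZ-JFA-2021}), and handle the far part by the kernel regularity estimate of Lemma \ref{lem:Main-lemma}/Remark \ref{rem:lem:Main-lemma} together with a dyadic annulus decomposition and the Dini condition \eqref{conditions:Dini-Omega-function}. The only differences are cosmetic (you use $2\sqrt n\,Q$ where the paper uses the ball $B$ circumscribing $Q$, and you add the check that the auxiliary constant $c = T_1 f_\infty(x)$ is finite), so there is nothing further to compare.
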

\begin{proof}
Suppose $Q$ is a cube containing $x$, and let $B$ be the ball with the same center and of diameter equal to the length of the diagonal of $Q$. Let $f_1 = f \, {\mathbbm{1}}_{2B}$ and $f_2 = f \, {\mathbbm{1}}_{(2B)^c}$, so that $f=f_1+f_2$. Now, it suffices to show that $$ \frac{1}{|Q|} \int_Q |T_1f(y)-T_1f_2(x)| \, dy \leq C (M(|f|^s)(x))^{\frac{1}{s}}, $$
which will follow from the following two estimates: 
\begin{align*} 
\frac{1}{|Q|}\int_Q |T_1f_1(y)| \, dy & \leq C (M(|f|^s)(x))^{\frac{1}{s}}, \\ 
\text{and} \quad \frac{1}{|Q|}\int_Q |T_1f_2(y)-T_1f_2(x)| \, dy & \leq C (M(|f|^s)(x))^{\frac{1}{s}}. 
\end{align*} 

The first inequality is a simple consequence of H\"{o}lder's inequality and the known $L^s$-boundedness of $T_1$ (Lemma \ref{thm2.1:Chen-Guo-Extension-CZ-JFA-2021}). On the other hand, with $d$ denoting the radius of $B$, 
\begin{align*}
& \int_Q |T_1f_2(y)-T_1f_2(x)| \, dy \\
& \leq \int_Q \sum_{k=1}^{\infty}\int_{2^k d <|x-z|\leq 2^{k+1}d}\left| \frac{\Omega(y-z)}{|y-z|^{n-\beta}}\chi_{\beta}(|y-z|)-\frac{\Omega(x-z)}{|x-z|^{n-\beta}}\chi_{\beta}(|x-z|) \right| |f(z)| \, dz \, dy \\ 
& \lesssim \int_Q \left( \sum_{k=1}^{\infty} \left\{ \frac{|x-y|}{(2^k d)^{n+1}} \left\| \Omega \right\|_{L^\infty (S^{n-1})} + \frac{1}{(2^k d)^n} \omega_{\infty} \left(2^{-k+1} \right) \right\} \int_{|x-z|\leq 2^{k+1}d}|f(z)| \, dz \right) dy \\ 
& \lesssim \int_Q \left\{ Mf(x) + Mf(x) \sum_{k=1}^{\infty} \omega_{\infty} \left(2^{-k+1} \right) \right\} dy, \\ 
& \lesssim |Q| \, Mf(x), 
\end{align*}
where the second inequality follows from Lemma \ref{lem:Main-lemma}, and convergence of $\sum_{k=1}^{\infty} \omega_{\infty} \left(2^{-k+1} \right)$ is a consequence of assumption \eqref{conditions:Dini-Omega-function} (similar to the one seen during the estimation of $E_2$ in the proof of part \ref{part(i)-T_1-proof-thm:CZ-Lipschitz-space} in the proof of Theorem \ref{subsec:CZ-Lipschitz}). 

Altogether, we have 
$$ \frac{1}{|Q|} \int_Q |T_1f_2(y)-T_1f_2(x)| \, dy \lesssim Mf(x) \lesssim (M(|f|^s)(x))^{\frac{1}{s}}, $$
which completes the proof of the Lemma \ref{proof:thm:CZ-weighted-Lp-estimate-part1-Lemma1}.
\end{proof}

Next, we shall prove inequality \eqref{proof:thm:CZ-weighted-Lp-estimate-part2}. For this we shall make use of the following important property of Muckenhoupt weights, that for given $\omega \in A_p$, there exists $q \in (1, p)$ such that $\omega \in A_q$, and then there exists a constant $C>0$ (depending on $\omega$ and $q$) such that 
$$\omega(r B) \leq C r^{n q} \ \omega(B)$$
for any cube $Q$ and $r>1$. 

Now, applying Minkowski's inequality and taking $0<\beta < n(1-\frac{q}{p})$, we can estimate $T_2$ as follows: 
\begin{align*}
& \left( \int_{\mathbb{R}^n} |T_2f(x)|^p \omega(x) \, dx \right)^{\frac{1}{p}} \\
& \leq \int_{\mathbb{R}^n} |f(y)| \left( \int_{\mathbb{R}^n} \frac{|\Omega(x-y)|^p}{|x-y|^{(n-\beta)p}} (1-\chi_\beta (|x-y|))^p \omega(x) \, dx  \right)^{\frac{1}{p}} dy \\ 
& \lesssim \int_{\mathbb{R}^n} |f(y)| \left(\sum_{j=0}^{\infty} \int_{\frac{2^j}{\beta} \leq |x-y| \leq  \frac{2^{j+1}}{\beta}} \frac{\omega(x)}{|x-y|^{(n-\beta)p}}  \, dx  \right)^{\frac{1}{p}} dy \\
& \leq \int_{\mathbb{R}^n} |f(y)| \left(\sum_{j=0}^{\infty} \left( \frac{2^j}{\beta} \right)^{-(n-\beta)p} \omega \left(B(y,2^{j+1}\frac{1}{\beta}) \right) \right)^{\frac{1}{p}} dy \\
& \lesssim \left(\frac{1}{\beta} \right)^{\beta} \beta^{n} \int_{\mathbb{R}^n} |f(y)| \left(\sum_{j=0}^{\infty} (2^j)^{-(n-\beta)p} \,  \left(2^{j+1}\right)^{nq}\omega\left(B(y,\frac{1}{\beta})\right) \right)^{\frac{1}{p}} dy \\
& \lesssim \beta^{n}\sum_{j=0}^{\infty} (2^j)^{-(n-\beta)+n\frac{q}{p}} \int_{\mathbb{R}^n}|f(y)|\left(\omega(B(y,\frac{1}{\beta}))\right)^{\frac{1}{p}} dy \\
& = \frac{\beta^{n(1-\frac{1}{p})}}{1- 2^{-(n-\beta)+n\frac{q}{p}}} \int_{\mathbb{R}^n}|f(y)|\left(\beta^n\int_{B(y,\frac{1}{\beta})}\omega(x) \, dx\right)^{\frac{1}{p}} dy \\
& \lesssim \frac{\beta^{n(1-\frac{1}{p})}}{n(1 - \frac{q}{p}) - \beta}  \int_{\mathbb{R}^n} |f(y)| M\omega(y)^{\frac{1}{p}} \, dy,  
\end{align*}
completing the proof of inequality \eqref{proof:thm:CZ-weighted-Lp-estimate-part2} and with this the proof of Theorem \ref{thm:CZ-weighted-Lp-estimate} is completed. 


\section{Boundedness of commutators of CZ type operators} \label{sec:proofs-commutator} 
This section is devoted to proving Theorems \ref{thm:commutator-b-BMO-Lp-estimate}, \ref{thm:commutator-b-Lipschitz-Lp-Lq-estimate} and \ref{thm:commutator-b-Lipschitz-Hp-Lq-estimate} concerning estimates for commutator operator $[b,T_{\beta}]$ defined in \eqref{def:extension-commutator-operator}. 

With the cut-off function $\chi$ as in \eqref{def:cut-off-function}, we decompose $[b,T_{\beta}] f$ as follows: 
$$ [b,T_{\beta}]f(x) = [b,T_{\beta}]_1f(x) + [b,T_{\beta}]_2f(x),$$ 
where 
\begin{align*} 
[b,T_{\beta}]_1f(x) = \int_{\mathbb{R}^n} \frac{\Omega(x-y)}{|x-y|^{n-\beta}} \chi_{\beta}(|x-y|)[b(x)-b(y)] \, f(y) \, dy, 
\end{align*} 
for $0 < \beta < n$. 


\subsection{\texorpdfstring{$(L^p, L^p)$}{}-boundedness: Proof of Theorem \ref{thm:commutator-b-BMO-Lp-estimate}} \label{subsec:commutator-BMO} 

In view of \eqref{inequality:Sharp-maximal}, we will be done if we could show that 
\begin{align} \label{est:sharp-max-commutator-part1}
\| M^{\#} \left( [b,T_{\beta}]_1f \right) \|_{L^p} \leq C \|b\|_{BMO} \|f\|_{L^p}, 
\end{align}
and 
\begin{align} \label{est:sharp-max-commutator-part2}
\| M^{\#} \left( [b,T_{\beta}]_2 f \right) \|_{L^p} & \leq C \|b\|_{BMO}\left\{ \frac{\beta^{n(1-\frac{1}{p})}}{\sqrt[p]{n(p-1)-\beta p}} \|f\|_{L^1} + \frac{\beta^{l}}{(l-\beta)^{1 - \frac{l}{n}}} \|f\|_{L^r} \right\}, 
\end{align}
with indices same as in the statement of Theorem \ref{thm:commutator-b-BMO-Lp-estimate}. 

\medskip \noindent \textbf{\underline{Proof of estimate \eqref{est:sharp-max-commutator-part1}}:} 
For estimating $M^{\#} \left( [b,T_{\beta}]_1f \right)$, we suitably adapt the proof of Theorem 2.4.1 from \cite{Lu-Ding-Yan-book-singular-int-2007}. Fix a cube $Q$ with center $x_0$ and write 
\begin{align*}
[b,T_{\beta}]_1f(x) & = (b(x)-b_Q)T_1f(x) - T_1((b-b_Q)f {\mathbbm{1}}_{2Q})(x) - T_1((b-b_Q)f {\mathbbm{1}}_{(2Q)^c})(x) \\ 
& =: a_1(x) - a_2(x) - a_3(x). 
\end{align*} 
Then, in view of \eqref{ineq:sharp-maximal-function-pointwise}, we have
\begin{align*}
M^{\#} \left( [b,T_{\beta}]_1f \right)(x_0) & \lesssim \sup_Q \frac{1}{|Q|} \int_Q \left| [b,T_{\beta}]_1f(x) - a_3 (x_0)\right| dx \\
& \lesssim \sup_Q \frac{1}{|Q|} \left[\int_{Q}|a_1(x)| \, dx + \int_{Q}|a_2(x)| \, dx +  \int_{Q}|a_3(x)-a_3(x_0)| \, dx \right].
\end{align*}

For $a_1 (x)$, since $1<r<p$, applying H\"older's inequality, we get 
\begin{align} \label{est:sharp-max-commutator-part1-a1}
\frac{1}{|Q|} \int_{Q}|a_1(x)| \, dx & \leq \left( \frac{1}{|Q|} \int_Q \left| b(x)-b_Q \right|^{r'} dx \right)^{1/r'} \left( \frac{1}{|Q|} \int_Q \left| T_1f(x) \right|^r dx \right)^{1/r} \\ 
\nonumber & \lesssim \|b\|_{BMO} \ \left(M(|T_1 f|^r)(x_0) \right)^{\frac{1}{r}}. 
\end{align} 

For $a_2 (x)$, choose $s, t > 1$ such that $st=r$, and using Theorem \ref{thm2.1:Chen-Guo-Extension-CZ-JFA-2021}, we get 
\begin{align} \label{est:sharp-max-commutator-part1-a2}
\frac{1}{|Q|}\int_{Q}|a_2(x)| \, dx & \leq \left( \frac{1}{|Q|}\int_{Q}|T_1((b-b_Q)f {\mathbbm{1}}_{2Q})(x)|^s  \, dx \right)^\frac{1}{s} \\
\nonumber & \lesssim \left(\frac{1}{|Q|}\int_{2Q}|b(x)-b_Q|^s |f(x)|^s  \, dx \right)^{\frac{1}{s}}  \\
\nonumber & \leq \left(\frac{1}{|Q|} \int_{2Q}|b(x)-b_Q|^{s t^{\prime}}  \, dx \right)^{\frac{1}{s t^{\prime}}} \left(\frac{1}{|Q|}\int_{2Q}|f(x)|^{s t}  \, dx \right)^{\frac{1}{s t}}\\
\nonumber & \lesssim \|b\|_{BMO} \left(M(|f|^r)(x_0) \right)^{\frac{1}{r}}. 
\end{align} 

Finally, for $a_3 (x), $ using Remark \ref{rem:lem:Main-lemma}, we have for any $x \in Q$, 
\begin{align} \label{est:sharp-max-commutator-part1-a3}
& |a_3(x)-a_3(x_0)| \\ 
\nonumber & \leq \sum_{k=1}^{\infty}\int_{2^{k+1}Q\setminus 2^k Q}\left| \frac{\Omega(x-z)}{|x-z|^{n-\beta}}\chi_{\beta}(|x-z|)-\frac{\Omega(x_0-z)}{|x_0-z|^{n-\beta}}\chi_{\beta}(|x_0-z|) \right||b(z)-b_Q| |f(z)| \, dz \\ 
\nonumber & \lesssim \left\| \Omega \right\|_{L^\infty (S^{n-1})} \sum_{k=1}^{\infty}\int_{2^{k+1}Q\setminus 2^k Q}\frac{|x-x_0|}{|x_0-z|^{n+1}}|b(z)-b_Q||f(z)| \, dz \\
\nonumber & \quad +  \sum_{k=1}^{\infty} \omega_{\infty} \left(\frac{\kappa_n}{2^{k-1}} \right) \int_{2^{k+1}Q\setminus 2^k Q}\frac{1}{|x_0-z|^{n}}|b(z)-b_Q||f(z)| \, dz \\
\nonumber & \lesssim \left(\sum_{k=1}^{\infty} \frac{1}{2^k}\frac{1}{|2^{k+1} Q|}\int_{2^{k+1}Q}|b(z)-b_Q|^{r^{\prime}} \, dz \right)^{\frac{1}{r^{\prime}}}\left(\sum_{k=1}^{\infty}\frac{1}{2^k}\frac{1}{|2^{k+1} Q|}\int_{2^{k+1}Q}|f(z)|^{r} \, dz \right)^{\frac{1}{r}} \\
\nonumber & \quad \quad + \left(\sum_{k=1}^{\infty} \omega_{\infty}\left(\frac{\kappa_n}{2^{k-1}}\right) \frac{1}{|2^{k+1} Q|}\int_{2^{k+1}Q}|b(z)-b_Q|^{r^{\prime}} \, dz \right)^{\frac{1}{r^{\prime}}} \\
\nonumber & \quad \quad \quad \times \left(\sum_{k=1}^{\infty} \omega_{\infty} \left(\frac{\kappa_n}{2^{k-1}}\right) \frac{1}{|2^{k+1} Q|} \int_{2^{k+1}Q}|f(z)|^{r} \, dz \right)^{\frac{1}{r}} \\
\nonumber & \lesssim \|b\|_{BMO} \ \left(M(| f|^r)(x_0) \right)^{\frac{1}{r}}.
\end{align} 

Combining estimates \eqref{est:sharp-max-commutator-part1-a1}, \eqref{est:sharp-max-commutator-part1-a2}, \eqref{est:sharp-max-commutator-part1-a3} together, and taking the supremum over all cubes $Q$ with center $x_0$, if we take the $L^p$-norm in $x_0$-variable, then using Theorem \ref{thm2.1:Chen-Guo-Extension-CZ-JFA-2021}, we conclude that estimate \eqref{est:sharp-max-commutator-part1} holds true.

\medskip \noindent \textbf{\underline{Proof of estimate \eqref{est:sharp-max-commutator-part2}}:} As earlier, fix a cube $Q$ with center $x_0$ and write 
\begin{align*}
[b,T_{\beta}]_2f(x) & = (b(x)-b_Q)T_2f(x)-T_2((b-b_Q)f{\mathbbm{1}}_{2Q})(x)-T_2((b-b_Q)f{\mathbbm{1}}_{(2Q)^c})(x) \\
& =: A_1(x) - A_2(x) - A_3(x). 
\end{align*}
Same as in the case of $a_1 (x)$, for $A_1 (x)$ too, applying H\"older's inequality, we get 
\begin{align*} 
\frac{1}{|Q|}\int_{Q}|A_1(x)| \, dx  \leq C \|b\|_{BMO} (M(|T_2f|^r)(x_0))^{\frac{1}{r}}. 
\end{align*}
Therefore, for $0<\beta<l<n(1-\frac{1}{p})$, we can make use of Lemma \ref{lem2.2:Yu-Jiu-Li-CZ-JFA-2021} together with taking supremum over all cubes $Q$ with center $x_0$, we get 
\begin{align} \label{est:sharp-max-commutator-part2-A1} 
\left\| \sup_Q \frac{1}{|Q|} \int_{Q}|A_1(x)| \, dx \right\|_{L^p(\, dx_0)} & \lesssim \frac{\beta^{n(1-\frac{1}{p})}}{\sqrt[p]{n(p-1)-\beta p}} \|b\|_{BMO} \|f\|_{L^1}.
\end{align}

For $A_2 (x)$, applying H\"older's inequality with exponent $\frac{n}{n-l}$ and its H\"older conjugate, and making use of Lemma \ref{lem2.2:Yu-Jiu-Li-CZ-JFA-2021}, as $\beta<l$ and choosing $1<u<r$, we get 
\begin{align*}
\frac{1}{|Q|}\int_{Q} |A_2(x)|  \, dx & \leq  \left( \frac{1}{|Q|} \int_{Q}|T_2((b-b_Q)f{\mathbbm{1}}_{2Q})(x)|^\frac{n}{n-l}  \, dx\right)^{\frac{n-l}{n}} \\
& \lesssim \frac{\beta^{l}}{\left( \frac{n l}{n-l}-\frac{n\beta}{n-l}\right)^{1-\frac{l}{n}}}\frac{1}{|Q|^{1-\frac{l}{n}}} \int_{2 Q} |((b-b_Q)f{\mathbbm{1}}_{2Q})(x)|  \, dx \\
& \lesssim \frac{\beta^{l}}{(l-\beta)^{1-\frac{l}{n}}}\frac{1}{|Q|^{1-\frac{l}{n}}} \left(\int_{2Q}|b(y)-b_Q|^{u^{\prime}} \, dy \right)^{\frac{1}{u^{\prime}}}\left(\int_{2Q}|f(y)|^u  \, dy\right)^{\frac{1}{u}} \\
& \lesssim \frac{\beta^{l}}{(l-\beta)^{1-\frac{l}{n}}} \|b\|_{BMO} \left(\frac{1}{|Q|^{1-\frac{l u}{n}}} \int_{2Q}|f(y)|^u  \, dy\right)^{\frac{1}{u}} \\
& \lesssim \frac{\beta^{l}}{(l-\beta)^{1-\frac{l}{n}}} \|b\|_{BMO} \ [f]_{l , u}^{*}(x_0), 
\end{align*} 
and therefore, using Lemma \ref{lem:fractional-maximal-Lp-Lq-bound}, we get 
\begin{align} \label{est:sharp-max-commutator-part2-A2} 
\left\| \sup_Q \frac{1}{|Q|} \int_{Q}|A_2(x)| \, dx \right\|_{L^p(\, dx_0)} & \lesssim \frac{\beta^{l}}{(l-\beta)^{1-\frac{l}{n}}} \|b\|_{BMO} \|f\|_{L^r}. 
\end{align}

Finally, we estimate the part for $A_3 (x)$ as follows. We make use of the mean-value theorem for $\left| \frac{1}{|x-y|^{n-\beta}} - \frac{1}{|x_0-y|^{n-\beta}} \right|$ and $\left| (1-\chi_{\beta}(|x-y|)) - (1-\chi_{\beta}(|x_0-y|)) \right|$ (as also done in Section \ref{sec:proofs-FCZ-Lipschitz-spaces}), and the $L^{\infty}$-boundedness of $\Omega$, to decompose $|A_3(x)-A_3(x_0)|$ in the following manner. For any $x \in Q$,  
\begin{align*}
& |A_3(x)-A_3(x_0)| \\
& \leq \int_{\mathbb{R}^n \setminus 2Q} \left|\frac{\Omega(x-y)}{|x-y|^{n-\beta}}(1-\chi_{\beta}(|x-y|))-\frac{\Omega(x_0-y)}{|x_0-y|^{n-\beta}}(1-\chi_{\beta}(|x_0-y|)) \right| \, |b(y)-b_Q||f(y)| \, dy \\ 
& \lesssim \int_{\mathbb{R}^n \setminus 2Q} \frac{|x-x_0|}{|x-y|^{n-\beta+1}}(1-\chi_{\beta}(|x-y|)) \,  |b(y)-b_Q| \, |f(y)| \, dy \\ 
& \quad + \int_{\mathbb{R}^n \setminus 2Q} \frac{|x-x_0|}{|x-y|^{n-\beta+1}}{\mathbbm{1}}_{\{|x-y| \sim \frac{1}{\beta}\}} \, |b(y)-b_Q| \, |f(y)| \, dy \\
& \quad + \int_{\mathbb{R}^n \setminus 2Q} \frac{(1-\chi_{\beta}(|x-y|))}{|x_0-y|^{n-\beta}}|\Omega(x-y)-\Omega(x_0-y)| |b(y)-b_Q| \, |f(y)| \, dy.
\end{align*}

For the first part of the above expression, we have
\begin{align*}
& \int_{\mathbb{R}^n \setminus 2Q} \frac{|x-x_0|}{|x-y|^{n-\beta+1}}(1-\chi_{\beta}(|x-y|))|b(y)-b_Q||f(y)| \, dy \\
& \leq \left(\int_{(2Q)^c}\frac{|x-x_0|}{|x-y|^{n+1}}|b(y)-b_Q|^{r^{\prime}} \, dy \right)^{\frac{1}{r^{\prime}}} \left(\int_{(2Q)^c}\frac{|x-x_0|(1-\chi_{\beta}(|x-y|))}{|x-y|^{n+1-\beta r}}|f(y)|^r  \, dy \right)^{\frac{1}{r}} \\
& \lesssim \left(\sum_{j=1}^{\infty}\int_{2^{j+1}Q\setminus 2^j Q}\frac{|x-x_0|}{|x-y|^{n+1}}|b(y)-b_Q|^{r^{\prime}}  \, dy \right)^{\frac{1}{r^{\prime}}} \left(\int_{|x-y| \geq \frac{1}{\beta}} \frac{1}{|x-y|^{n-\beta r}}|f(y)|^r \, dy \right)^{\frac{1}{r}} \\
& \leq \left(\sum_{j=1}^{\infty}\frac{1}{2^j}\frac{1}{|2^{j+1}Q|}\int_{2^{j+1}Q}|b(y)-b_Q|^{s^{\prime}}  \, dy \right)^{\frac{1}{r^{\prime}}}\left(\int_{|x_0-y|\gtrsim \frac{1}{\beta}}\frac{1}{|x_0-y|^{n-\beta r}}|f(y)|^r  \, dy \right)^{\frac{1}{r}} \\
& \lesssim \|b\|_{BMO}\left(\int_{\mathbb{R}^n}\frac{{\mathbbm{1}}_{\{|x_0-y| \gtrsim \frac{1}{\beta}\}}}{|x_0-y|^{n-\beta r}}|f(y)|^r  \, dy \right)^{\frac{1}{r}}.
\end{align*}

Similarly, for the second part, we have 
$$ \int_{\mathbb{R}^n \setminus 2Q} \frac{|x-x_0|}{|x-y|^{n-\beta+1}}{\mathbbm{1}}_{\{|x-y|\sim \frac{1}{\beta}\}} |b(y)-b_Q||f(y)| \, dy \lesssim \|b\|_{BMO} \left(\int_{\mathbb{R}^n}\frac{{\mathbbm{1}}_{\{|x_0-y|\sim \frac{1}{\beta}\}} }{|x_0-y|^{n-\beta r}}|f(y)|^r  \, dy \right)^{\frac{1}{r}}. $$ 

Finally, for the third part, we have 
\begin{align*}
& \int_{\mathbb{R}^n \setminus 2Q} \frac{(1-\chi_{\beta}(|x-y|))}{|x_0-y|^{n-\beta}}|\Omega(x-y)-\Omega(x_0-y)| |b(y)-b_Q||f(y)| \, dy \\
& \lesssim \sum_{j=1}^{\infty} \left( \int_{2^{j+1}Q \setminus 2^jQ}\frac{|\Omega(x-y)-\Omega(x_0-y)|^{r^{\prime}}}{|x-y|^{n}}|b(y)-b_Q|^{r^{\prime}} \, dy \right)^{\frac{1}{r^{\prime}}} \left(\int_{|x-y| \geq \frac{1}{\beta}} \frac{|f(y)|^r}{|x-y|^{n-\beta r}} \, dy \right)^{\frac{1}{r}} \\
& \lesssim \sum_{j=1}^{\infty} \omega_{\infty} \left(\frac{\kappa_n}{2^{j-1}} \right) \left(\frac{1}{|2^{j+1}Q|}\int_{2^{j+1}Q}|b(y)-b_Q|^{r^{\prime}}  \, dy \right)^{\frac{1}{r^{\prime}}} \left(\int_{|x_0-y| \gtrsim \frac{1}{\beta}} \frac{1}{|x_0-y|^{n-\beta r}}|f(y)|^r  \, dy \right)^{\frac{1}{r}} \\
& \lesssim \|b\|_{BMO} \left( \int_{\mathbb{R}^n} \frac{{\mathbbm{1}}_{\{|x_0-y| \gtrsim \frac{1}{\beta}\}}}{|x_0-y|^{n-\beta r}}|f(y)|^r  \, dy \right)^{\frac{1}{r}}. 
\end{align*} 

Taking all three estimates together, and making use of  $0 < \beta < l = n(\frac{1}{r}-\frac{1}{p})$, we get 
\begin{align} \label{est:sharp-max-commutator-part2-A3} 
\left\| \sup_Q \frac{1}{|Q|}\int_{Q}|A_3(x)-A_3(x_0)|  \, dx \right\|_{L^p(d x_0)} & \lesssim \|b\|_{BMO} \left\| \left(\int_{\mathbb{R}^n}\frac{{\mathbbm{1}}_{\{|x_0-y| \geq \frac{1}{\beta}\}}}{|x_0-y|^{n-\beta r}} |f(y)|^r  \, dy \right)^{\frac{1}{r}}\right\|_{L^p(d x_0)} \\ 
\nonumber & \lesssim \frac{\beta^{n(\frac{1}{r}-\frac{1}{p})}}{\sqrt[p]{n(\frac{p}{r}-1)-\beta p}} \|b\|_{BMO} \|f\|_{L^r} \\
\nonumber & \lesssim \frac{\beta^{l}}{\sqrt[p]{l-\beta}} \|b\|_{BMO} \|f\|_{L^r}.
\end{align} 

Combining estimates \eqref{est:sharp-max-commutator-part2-A1}, \eqref{est:sharp-max-commutator-part2-A2}, and \eqref{est:sharp-max-commutator-part2-A3} together, we conclude that estimate \eqref{est:sharp-max-commutator-part2} holds true, and this completes the proof of Theorem \ref{thm:commutator-b-BMO-Lp-estimate}.


\subsection{\texorpdfstring{$(L^p, L^q)$}{}-boundedness: Proof of Theorem \ref{thm:commutator-b-Lipschitz-Lp-Lq-estimate}}  \label{subsec:commutator-Lp-Lq-spaces} 

As $b \in Lip_{\gamma}(\mathbb{R}^n)$, the boundedness of $[b,T_{\beta}]_2f$ is easy. In fact, with $0 < \beta < n(1-\frac{1}{q}) - \gamma$, we have
\begin{align} \label{est1:proof-thm:commutator-b-Lipschitz-Lp-Lq-estimate}
\|[b,T_{\beta}]_2f\|_{L^q} & = \left(\int_{\mathbb{R}^n}\left|\int_{\mathbb{R}^n}\frac{\Omega(x-y)}{|x-y|^{n-\beta}}(1-\chi_{\beta}(|x-y|))[b(x)-b(y)]f(y) \, dy \right|^q dx \right)^{\frac{1}{q}} \\
\nonumber & \leq  \int_{\mathbb{R}^n}|f(y)|\left(\int_{|x-y| \geq \frac{1}{\beta}} \frac{|b(x)-b(y)|^q}{|x-y|^{(n-\beta)q}} \, dx \right)^{\frac{1}{q}} dy \\
\nonumber & \lesssim \|b\|_{Lip_{\gamma}} \int_{\mathbb{R}^n} |f(y)| \left(\int_{|x-y| \geq \frac{1}{\beta}} \frac{1}{|x-y|^{(n-\beta-\gamma)q}} \, dx \right)^{\frac{1}{q}} dy \\
\nonumber & \lesssim \frac{\beta^{n(1-\frac{1}{q})-\gamma}}{\sqrt[q]{n(q-1)-(\beta-\gamma)q}} \|b\|_{Lip_{\gamma}} \, \|f\|_{L^1}.
\end{align}


To analyse $[b,T_{\beta}]_1 f$, we again take inputs from the proof of Lemma 2.4.1 of \cite{Lu-Ding-Yan-book-singular-int-2007}. In view of inequality \eqref{inequality:Sharp-maximal}, Lemma \ref{lem:fractional-maximal-Lp-Lq-bound}, and Theorem \ref{thm2.1:Chen-Guo-Extension-CZ-JFA-2021}, the proof of Theorem \ref{thm:commutator-b-Lipschitz-Lp-Lq-estimate} will be completed once we show that 
\begin{align} \label{sharp maximal extimate of T_1}
    M^{\#}([b,T_{\beta}]_1f)(x_0) \lesssim \|b\|_{Lip_{\gamma}}\left([T_1 f]_{\gamma , l}^{*} (x_0) + [f]_{\gamma , l}^{*} (x_0) \right),
\end{align}
which we now proceed to show. 

We first decompose $[b,T_{\beta}]_1 f$ into three parts, as done in the proof of estimate \eqref{est:sharp-max-commutator-part1}. Namely, fixing a cube $Q$ with center $x_0$, we write 
\begin{align*}
[b,T_{\beta}]_1f(x) & = (b(x)-b_Q)T_1f(x)-T_1((b-b_Q)f {\mathbbm{1}}_{2Q})(x)-T_1((b-b_Q)f{\mathbbm{1}}_{(2Q)^c})(x) \\
& =: a_1(x)-a_2(x)-a_3(x),
\end{align*}
and fix some $1 < l < p$. 

For $a_1(x)$, applying H\"older's inequality, and as $b \in Lip_{\gamma}$, we get 
\begin{align} \label{Lipschitz a_1 estimate}
\frac{1}{|Q|}\int_{Q}|a_1(x)| \, dx & \leq \left(\frac{1}{|Q|}\int_Q |b(x)-b_Q|^{l^\prime} \, dx \right)^{\frac{1}{l^{\prime}}}\left(\frac{1}{|Q|}\int_{Q}|T_1f(x)|^l \, dx \right)^{\frac{1}{l}} \\
\nonumber & \lesssim \|b\|_{Lip_{\gamma}} |Q|^{\frac{\gamma}{n}}\left(\frac{1}{|Q|} \int_{Q} |T_1f(x)|^l \, dx \right)^{\frac{1}{l}} \\
\nonumber & \leq \|b\|_{Lip_{\gamma}} \, [T_1 f]_{\gamma , l}^{*}(x_0).
\end{align}

For $a_2(x)$, applying again H\"older's inequality and Theorem \ref{thm2.1:Chen-Guo-Extension-CZ-JFA-2021}, we get
\begin{align} \label{Lipschitz a_2 estimate}
\frac{1}{|Q|}\int_{Q}|a_2(x)| \, dx & \leq \left( \frac{1}{|Q|}\int_{Q}|T_1((b-b_Q)f{\mathbbm{1}}_{2Q})(x)|^l  \, dx \right)^l \\
\nonumber & \lesssim \left(\frac{1}{|Q|}\int_{2Q}|b(x)-b_Q|^l |f(x)|^l  \, dx \right)^{\frac{1}{l}} \\
\nonumber & \leq C \|b\|_{Lip_{\gamma}} \, [f]_{\gamma , l}^{*}(x_0). 
\end{align} 

Finally for $a_3(x)$, using Remark \ref{rem:lem:Main-lemma}, H\"older's inequality, Dini condition \eqref{conditions:Dini-Omega-function}, and the fact that $b \in Lip_{\gamma}(\mathbb{R}^n)$, we get 
\begin{align} \label{Lipschitz a_3 estimate}
& |a_3(x)-a_3(x_0)| \\ 
\nonumber & \lesssim \sum_{k=1}^{\infty}\int_{2^{k+1}Q\setminus 2^k Q}\left| \frac{\Omega(x-z)}{|x-z|^{n-\beta}}\chi_{\beta}(|x-z|)-\frac{\Omega(x_0-z)}{|x_0-z|^{n-\beta}}\chi_{\beta}(|x_0-z|) \right||b(z)-b_Q| |f(z)| \, dz \\
\nonumber & \lesssim \left\| \Omega \right\|_{L^\infty (S^{n-1})} \sum_{k=1}^{\infty}\int_{2^{k+1}Q\setminus 2^k Q}\frac{|x-x_0|}{|x_0-z|^{n+1}}|b(z)-b_Q||f(z)| \, dz  \\
\nonumber & \quad + \sum_{k=1}^{\infty}\omega_{\infty}\left(\frac{\kappa_n}{2^{k-1}} \right)\int_{2^{k+1}Q\setminus 2^k Q}\frac{1}{|x_0-z|^{n}}|b(z)-b_Q||f(z)| \, dz \\
\nonumber & \lesssim \|b\|_{Lip_{\gamma}} \sum_{k=1}^{\infty}\frac{|x-x_0|}{|2^k Q|^{1-\frac{\gamma}{n}+\frac{1}{n}}}\int_{2^{k+1}Q\setminus 2^k Q}|f(z)|  \, dz  \\
\nonumber & \quad + \|b\|_{Lip_{\gamma}} \sum_{k=1}^{\infty}\omega_{\infty}\left(\frac{\kappa_n}{2^{k-1}} \right)\frac{1}{|2^k Q|^{1-\frac{\gamma}{n}}}\int_{2^{k+1}Q\setminus 2^k Q}|f(z)|  \, dz \\
\nonumber & \lesssim \|b\|_{Lip_{\gamma}} \sum_{k=1}^{\infty}\frac{1}{2^k}\frac{1}{|2^k Q|^{1-\frac{\gamma}{n}}}|2^k Q|^{\frac{1}{l^{\prime}}}\left(\int_{2^{k+1}Q}|f(z)|^l  \, dz\right)^{\frac{1}{l}} \\
\nonumber & \quad + \|b\|_{Lip_{\gamma}} \sum_{k=1}^{\infty}\omega_{\infty}\left(\frac{\kappa_n}{2^{k-1}} \right)\frac{1}{|2^k Q|^{1-\frac{\gamma}{n}}}|2^k Q|^{\frac{1}{l^{\prime}}}\left(\int_{2^{k+1}Q}|f(z)|^l  \, dz\right)^{\frac{1}{l}} \\
\nonumber & \lesssim \|b\|_{Lip_{\gamma}} \sum_{k=1}^{\infty}\left(\frac{1}{2^k}+\omega_{\infty}\left(\frac{\kappa_n}{2^{k-1}} \right)\right)\left(\frac{1}{|2^{k+1}Q|^{1-\frac{\gamma l}{n}}}\int_{2^{k+1}Q}|f(z)|^l  \, dz\right)^{\frac{1}{l}} \\
\nonumber & \lesssim \|b\|_{Lip_{\gamma}} [f]_{\gamma , l}^{*}(x_0).
\end{align}

In view of \eqref{ineq:sharp-maximal-function-pointwise}, combining \eqref{Lipschitz a_1 estimate}, \eqref{Lipschitz a_2 estimate}, \eqref{Lipschitz a_3 estimate} and taking supremum over all cubes $Q$ with center $x_0$, we conclude that \eqref{sharp maximal extimate of T_1} holds true.

This completes the proof of Theorem \ref{thm:commutator-b-Lipschitz-Lp-Lq-estimate}.


\subsection{\texorpdfstring{$(H^p, L^q)$}{}-boundedness: Proof of Theorem \ref{thm:commutator-b-Lipschitz-Hp-Lq-estimate}}  \label{subsec:commutator-Hp-Lq-spaces} 

As $0< \beta < n(1-\frac{1}{q}) - \alpha$, we have already seen in \eqref{est1:proof-thm:commutator-b-Lipschitz-Lp-Lq-estimate} that 
$$ \|[b,T_{\beta}]_2f\|_{L^q} \lesssim \frac{\beta^{n(1-\frac{1}{q})-\alpha}}{\sqrt[q]{n(q-1)-(\beta-\alpha)q}} \|b\|_{Lip_{\alpha}} \, \|f\|_{L^1}. $$ 
So, we are only left with estimating $[b,T_{\beta}]_1 f$. For that, we follow ideas from \cite{Lu-Wu-Yang-commutators-Hardy-2002-Science-China}. By atomic decomposition, we only need to prove that there exists a constant $C > 0$ such that 
$$ \| [b,T_{\beta}]_1 \, a \|_{L^q} \leq C \|b\|_{Lip_{\alpha}},$$ 
for any $(p,2,0)$ atom $a$. 

Let $supp(a) \subseteq B = B(x_0 , r)$. Now, 
\begin{align*} 
\|[b,T_{\beta}]_1a\|_{L^q} \leq \left(\int_{|x-x_0|\leq 2r} |[b,T_{\beta}]_1a(x)|^q  \, dx  \right)^{\frac{1}{q}} + \left(\int_{|x-x_0|> 2r} |[b,T_{\beta}]_1a(x)|^q  \, dx  \right)^{\frac{1}{q}} =: I_1 + I_2. 
\end{align*} 

Choose and fix some $\tilde{p}$ and $\tilde{q}$ satisfying $1 < \tilde{p} < \min\{2,\frac{n}{\alpha}\}$ and $\frac{1}{\tilde{q}} = \frac{1}{\tilde{p}}-\frac{\alpha}{n}$. Since $p \leq 1 < \tilde{p}$ and $\frac{1}{p} - \frac{1}{q} = \frac{1}{\tilde{p}} - \frac{1}{\tilde{q}} = \frac{\alpha}{n}$, it follows that $q < \tilde{q}$. Now, using the $(L^{\tilde{p}},  L^{\tilde{q}})$-boundedness of $[b,T_{\beta}]_1$ (proved in subsection 4.2.), H\"older's inequality, and the property of atom $a$, we get 
\begin{align*} 
I_1 = \left(\int_{|x-x_0|\leq 2r} |[b,T_{\beta}]_1a(x)|^q  \, dx  \right)^{\frac{1}{q}} 
& \leq \left(\int_{|x-x_0|\leq 2r} |[b,T_{\beta}]_1a(x)|^{\tilde{q}}  \, dx  \right)^{\frac{1}{\tilde{q}}} \left( \int_{|x-x_0|\leq 2r} dx \right)^{\frac{1}{q}- \frac{1}{\tilde{q}}} \\ 
& \lesssim \|b\|_{Lip_{\alpha}} \|a\|_{L^{\tilde{p}}} \, r^{n(\frac{1}{q}-\frac{1}{\tilde{q}})} \\ 
& \lesssim \|b\|_{Lip_{\alpha}} \|a\|_{L^2} \, r^{n(\frac{1}{p}-\frac{1}{2})} \\
& \lesssim \|b\|_{Lip_{\alpha}}.
\end{align*}

In order to estimate $I_2$, note that 
$$ |[b,T_{\beta}]_1a(x)| \leq |(b(x)-b(x_0))T_1a(x)| + |T_1((b-b(x_0))a)(x)|.$$

For the first term on the right hand side of the above inequality, using the vanishing condition of atom $a$, we get 
\begin{align*}
& (b(x)-b(x_0))T_1a(x) \\
& = (b(x)-b(x_0)) \int_{B}\left\{ \frac{\Omega(x-y)}{|x-y|^{n-\beta}}\chi_\beta (|x-y|)-\frac{\Omega(x-x_0)}{|x-x_0|^{n-\beta}}
\chi_\beta (|x-x_0|)\right\} a(y)  \, dy, 
\end{align*}
and therefore, using Lemma \ref{lem:Main-lemma} and the definition of atom, we get that for $|x - x_0| > 2r$, 
\begin{align*}
& |(b(x)-b(x_0))T_1a(x)| \\
& \lesssim \|b\|_{Lip_{\alpha}} |x-x_0|^{\alpha}\left\{\int_B\left\{ \frac{|x_0-y|}{|x-x_0|^{n+1}} \left\| \Omega \right\|_{L^\infty (S^{n-1})} +\frac{1}{|x-x_0|^n} \omega_{\infty} \left( 2 \frac{|x_0-y|}{|x-x_0|} \right)\right\} |a(y)|  \, dy \right\} \\
& \lesssim \|b\|_{Lip_{\alpha}}\left\{ |x-x_0|^{\alpha-n-1} \, r^{1+n(1-\frac{1}{p})} + |x-x_0|^{\alpha-n}\omega_{\infty} \left( 2 \frac{r}{|x-x_0|} \right) r^{n(1-\frac{1}{p})}\right\} \\ 
& \lesssim \|b\|_{Lip_{\alpha}} \left\{ |x-x_0|^{-n} \, r^{\alpha+n(1-\frac{1}{p})} + |x-x_0|^{\alpha-n} \omega_{\infty} \left( 2 \frac{r}{|x-x_0|} \right) r^{n(1-\frac{1}{p})}\right\} .
\end{align*}

On the other hand, for $|x - x_0| > 2r$, 
\begin{align*}
|T_1((b-b(x_0)a)(x)| & = \left| \int_B \frac{\Omega(x-y)}{|x-y|^{n}} \left\{ |x-y|^{\beta} \chi_\beta (|x-y|) \right\} (b(y)-b(x_0))a(y)  \, dy \right| \\ 
& \lesssim \left( \frac{2}{\beta} \right)^{\beta} \|b\|_{Lip_{\alpha}} |x-x_0|^{-n} \int_B |y-x_0|^{\alpha}|a(y)| \, dy \\
& \lesssim \|b\|_{Lip_{\alpha}} |x-x_0|^{-n} \, r^{\alpha+n(1-\frac{1}{p})}. 
\end{align*} 

Combining the above estimates, we get 
\begin{align*} 
I_2 & = \left(\int_{|x-x_0|> 2r} |[b,T_{\beta}]_1a(x)|^q  \, dx  \right)^{\frac{1}{q}} \\ 
& \lesssim \|b\|_{Lip_{\alpha}} \, r^{\alpha+n(1-\frac{1}{p})} \left(\int_{|x-x_0|> 2r} |x-x_0|^{-n q}  \, dx  \right)^{\frac{1}{q}} \\ 
& \quad + \|b\|_{Lip_{\alpha}} \, r^{n(1-\frac{1}{p})} \left(\sum_{k=1}^{\infty} \omega_{\infty} \left(\frac{1}{2^{k-1}} \right) \int_{2^k r < |x-x_0|\leq 2^{k+1} r} |x-x_0|^{(\alpha-n)q}  \, dx \right)^{\frac{1}{q}} \\ 
& \lesssim \|b\|_{Lip_{\alpha}} + \|b\|_{Lip_{\alpha}} r^{n(1-\frac{1}{p})} \sum_{k=1}^{\infty}\omega_{\infty} \left(\frac{1}{2^{k-1}} \right)(2^k r)^{\alpha-n+\frac{n}{q}} \\ 
& \lesssim \|b\|_{Lip_{\alpha}} \left( 1 +  \sum_{k=1}^{\infty} (2^k)^{\alpha-n+\frac{n}{q}} \omega_{\infty} \left(\frac{1}{2^{k-1}} \right) \right)\\
& \lesssim \|b\|_{Lip_{\alpha}} \left( 1 + \sum_{k=1}^{\infty}(2^k)^{\alpha}\omega_{\infty} \left(\frac{1}{2^{k-1}} \right) \right) \\ 
& \lesssim \|b\|_{Lip_{\alpha}}.
\end{align*}
This completes the proof of Theorem \ref{thm:commutator-b-Lipschitz-Hp-Lq-estimate}. 


\section*{Acknowledgements}
The first author was supported in part by the INSPIRE Faculty Fellowship from the DST, Government of India. The third author was supported by the Prime Minister's Research Fellowship (PMRF) from the Ministry of Education, Govetrnment of India.


\bibliographystyle{amsalpha}

\begin{thebibliography}{GCRdF85}

\bibitem[CG21]{Chen-Guo-Extension-CZ-JFA-2021}
Yanping Chen and Zihua Guo, \emph{An extension of {C}alder\'{o}n-{Z}ygmund type
  singular integral with non-smooth kernel}, J. Funct. Anal. \textbf{281}
  (2021), no.~9, Paper No. 109196, 21. \MR{4291512}

\bibitem[Cha82]{Chanillo-note-commutators-IUMJ-1982}
S.~Chanillo, \emph{A note on commutators}, Indiana Univ. Math. J. \textbf{31}
  (1982), no.~1, 7--16. \MR{642611}

\bibitem[CRW76]{Coifman-Rochberg-Weiss-AnnalsMath-1976}
R.~R. Coifman, R.~Rochberg, and Guido Weiss, \emph{Factorization theorems for
  {H}ardy spaces in several variables}, Ann. of Math. (2) \textbf{103} (1976),
  no.~3, 611--635. \MR{412721}

\bibitem[DL00]{Ding-Lu-hom-frac-Hardy-Tohoku-2000}
Yong Ding and Shanzhen Lu, \emph{Homogeneous fractional integrals on {H}ardy
  spaces}, Tohoku Math. J. (2) \textbf{52} (2000), no.~1, 153--162.
  \MR{1740548}

\bibitem[DRdF86]{Duoandikoetxea-Rubio-maximal-singular-int-Inventione-1986}
Javier Duoandikoetxea and Jos\'{e}~L. Rubio~de Francia, \emph{Maximal and
  singular integral operators via {F}ourier transform estimates}, Invent. Math.
  \textbf{84} (1986), no.~3, 541--561. \MR{837527}

\bibitem[FS72]{Fefferman-Stein-Hp-spaces-Acta-Math-1972}
C.~Fefferman and E.~M. Stein, \emph{{$H^{p}$} spaces of several variables},
  Acta Math. \textbf{129} (1972), no.~3-4, 137--193. \MR{447953}

\bibitem[GCRdF85]{Cuerva-Francia-Weightet-norm-inequalities-85}
Jos\'{e} Garc\'{\i}a-Cuerva and Jos\'{e}~L. Rubio~de Francia, \emph{Weighted
  norm inequalities and related topics}, North-Holland Mathematics Studies,
  vol. 116, North-Holland Publishing Co., Amsterdam, 1985, Notas de
  Matem\'{a}tica [Mathematical Notes], 104. \MR{807149}

\bibitem[Gra14]{Grafakos-Modern-Fourier-Analysis}
Loukas Grafakos, \emph{Modern {F}ourier analysis}, third ed., Graduate Texts in
  Mathematics, vol. 250, Springer, New York, 2014. \MR{3243741}

\bibitem[Jan78]{Janson-mean-osc-commutator-ArkMath-1978}
Svante Janson, \emph{Mean oscillation and commutators of singular integral
  operators}, Ark. Mat. \textbf{16} (1978), no.~2, 263--270. \MR{524754}

\bibitem[LDY07]{Lu-Ding-Yan-book-singular-int-2007}
Shanzhen Lu, Yong Ding, and Dunyan Yan, \emph{Singular integrals and related
  topics}, World Scientific Publishing Co. Pte. Ltd., Hackensack, NJ, 2007.
  \MR{2354214}

\bibitem[Lu95]{Lu-book-4-Lect-Hp-1995}
Shan~Zhen Lu, \emph{Four lectures on real {$H^p$} spaces}, World Scientific
  Publishing Co., Inc., River Edge, NJ, 1995. \MR{1342077}

\bibitem[LW06]{Li-Wang-new-proof-Calderon-Zygmund-2006}
Dongsheng Li and Lihe Wang, \emph{A new proof for the estimates of
  {C}alder\'{o}n-{Z}ygmund type singular integrals}, Arch. Math. (Basel)
  \textbf{87} (2006), no.~5, 458--467. \MR{2269929}

\bibitem[LWY02]{Lu-Wu-Yang-commutators-Hardy-2002-Science-China}
Shanzhen Lu, Qiang Wu, and Dachun Yang, \emph{Boundedness of commutators on
  {H}ardy type spaces}, Sci. China Ser. A \textbf{45} (2002), no.~8, 984--997.
  \MR{1942912}

\bibitem[Muc72]{Muckenhoupt-TAMS1972}
Benjamin Muckenhoupt, \emph{Weighted norm inequalities for the {H}ardy maximal
  function}, Trans. Amer. Math. Soc. \textbf{165} (1972), 207--226. \MR{293384}

\bibitem[Pal95]{Paluszynski-Besov-via-commutator-IUMJ-1995}
M.~Paluszy\'{n}ski, \emph{Characterization of the {B}esov spaces via the
  commutator operator of {C}oifman, {R}ochberg and {W}eiss}, Indiana Univ.
  Math. J. \textbf{44} (1995), no.~1, 1--17. \MR{1336430}

\bibitem[See96]{Seeger-Singular-integral-rough-convolution-kernels-JAMS-1996}
Andreas Seeger, \emph{Singular integral operators with rough convolution
  kernels}, J. Amer. Math. Soc. \textbf{9} (1996), no.~1, 95--105. \MR{1317232}

\bibitem[Ste70]{Stein-book-Singular-integrals-1970}
Elias~M. Stein, \emph{Singular integrals and differentiability properties of
  functions}, Princeton Mathematical Series, No. 30, Princeton University
  Press, Princeton, N.J., 1970. \MR{0290095}

\bibitem[Ste93]{Stein-book-Harmonic-Analysis-1993}
\bysame, \emph{Harmonic analysis: real-variable methods, orthogonality, and
  oscillatory integrals}, Princeton Mathematical Series, vol.~43, Princeton
  University Press, Princeton, NJ, 1993, With the assistance of Timothy S.
  Murphy, Monographs in Harmonic Analysis, III. \MR{1232192}

\bibitem[TW80]{Taibleson-Weiss-molecular-Hardy-Asterisque-1980}
Mitchell~H. Taibleson and Guido Weiss, \emph{The molecular characterization of
  certain {H}ardy spaces}, Representation theorems for {H}ardy spaces,
  Ast\'{e}risque, vol.~77, Soc. Math. France, Paris, 1980, pp.~67--149.
  \MR{604370}

\bibitem[Wan03]{Li-geometric-approach-Calderon-Zygmund-2003}
Li~He Wang, \emph{A geometric approach to the {C}alder\'{o}n-{Z}ygmund
  estimates}, Acta Math. Sin. (Engl. Ser.) \textbf{19} (2003), no.~2, 381--396.
  \MR{1987802}

\bibitem[YJ19]{Yu-Jiu-extension-Riesz-transform-2019}
Huan Yu and Quansen Jiu, \emph{An extension of {R}iesz transform}, Nonlinear
  Anal. Real World Appl. \textbf{49} (2019), 405--417. \MR{3943440}

\bibitem[YJL21]{Yu-Jiu-Li-CZ-JFA-2021}
Huan Yu, Quansen Jiu, and Dongsheng Li, \emph{An extension of
  {C}alder\'{o}n-{Z}ygmund type singular integral}, J. Funct. Anal.
  \textbf{280} (2021), no.~5, Paper No. 108887, 22. \MR{4186659}

\bibitem[YZJ19]{Yu-Zheng-Jiu-Remarks-on-well-posedness}
Huan Yu, Xiaoxin Zheng, and Quansen Jiu, \emph{Remarks on well-posedness of the
  generalized surface quasi-geostrophic equation}, Arch. Ration. Mech. Anal.
  \textbf{232} (2019), no.~1, 265--301. \MR{3916975}

\end{thebibliography}

\providecommand{\bysame}{\leavevmode\hbox to3em{\hrulefill}\thinspace}
\providecommand{\MR}{\relax\ifhmode\unskip\space\fi MR }
\providecommand{\MRhref}[2]{%
  \href{http://www.ams.org/mathscinet-getitem?mr=#1}{#2}
}
\providecommand{\href}[2]{#2}

\end{document}